\definecolor{citation}{rgb}{0.2,0.3,1}
\newlength{\aufzleft}
\newenvironment{aufz}{\begin{list}{}{\setlength{\listparindent}{0pt}\setlength{\itemsep}{\topsep}\setlength{\labelwidth}{3.2ex}\setlength{\aufzleft}{\labelsep}\addtolength{\aufzleft}{\labelwidth}\setlength{\leftmargin}{\aufzleft}}}{\end{list}}
\newenvironment{equi}{\begin{list}{}{\setlength{\listparindent}{0pt}\setlength{\itemsep}{\topsep}\setlength{\labelwidth}{4.1ex}\setlength{\aufzleft}{\labelsep}\addtolength{\aufzleft}{\labelwidth}\setlength{\leftmargin}{\aufzleft}}}{\end{list}}
\newenvironment{qulist}{\begin{list}{}{\it\setlength{\itemsep}{\topsep}\setlength{\labelwidth}{3em}\setlength{\aufzleft}{\labelsep}\addtolength{\aufzleft}{\labelwidth}\setlength{\leftmargin}{\aufzleft}}}{\end{list}}
\newtheoremstyle{par}{1ex}{2ex}{\rm}{}{\bfseries}{}{0.8em}{\thmnumber{(#2)}}
\newtheoremstyle{thm}{1ex}{2ex}{\itshape}{}{\bfseries}{}{0.9em}{\thmnumber{(#2)}\thmname{ #1}\thmnote{ (#3)}}
\newtheoremstyle{ex}{1ex}{2ex}{\rm}{}{\bfseries}{}{0.8em}{\thmnumber{(#2)}\thmname{ #1}}
\theoremstyle{par}
\newtheorem{no}{}[section]
\theoremstyle{thm}
\newtheorem{prop}[no]{Proposition}
\newtheorem{cor}[no]{Corollary}
\theoremstyle{ex}
\newtheorem{exas}[no]{Examples}
\newcommand{\N}{\mathbbm{N}}
\newcommand{\Z}{\mathbbm{Z}}
\newcommand{\dfgl}{\mathrel{\mathop:}=}
\newcommand{\ga}{\Gamma}
\newcommand{\oga}{\overline{\Gamma}}
\newcommand{\res}{\!\!\upharpoonright}
\newcommand{\Id}{{\rm Id}}
\newcommand{\catmod}{{\sf Mod}}
\newcommand{\ia}{\mathfrak{a}}
\newcommand{\ib}{\mathfrak{b}}
\newcommand{\ip}{\mathfrak{p}}
\newcommand{\iq}{\mathfrak{q}}
\newcommand{\im}{\mathfrak{m}}
\newcommand{\inn}{\mathfrak{n}}
\newcommand{\assf}{\ass^{\rm f}}
\renewcommand{\P}{{\mathbf P}}
\DeclareMathOperator{\ass}{Ass}
\DeclareMathOperator{\var}{Var}
\DeclareMathOperator{\spec}{Spec}
\DeclareMathOperator{\Max}{Max}
\DeclareMathOperator{\Min}{Min}
\DeclareMathOperator{\supp}{Supp}
\begin{document}

\newcommand{\leadingzero}[1]{\ifnum #1<10 0\the#1\else\the#1\fi} 
\renewcommand{\today}{\the\year\leadingzero{\month}\leadingzero{\day}}

\title{Assassins and torsion functors II}
\author{Fred Rohrer}
\address{Grosse Grof 9, 9470 Buchs, Switzerland}
\email{fredrohrer@math.ch}
\subjclass[2010]{Primary 13C12; Secondary 13D30, 13D45}
\keywords{Torsion functor, assassin, weak assassin, fairness, centredness}

\begin{abstract}
Fairness and centredness of ideals in commutative rings, i.e., the relations between assassins and weak assassins of a module, its small or large torsion submodule, and the corresponding quotients, are studied. General criteria as well as more specific results about idempotent or nil ideals are given, and several examples are presented.
\end{abstract}

\maketitle\thispagestyle{fancy}


\section*{Introduction}

Let $R$ be a ring\footnote{Throughout what follows, rings are understood to possess a unit element, not necessarily different from $0$, and to be commutative. In general, notation and terminology follow Bourbaki's \textit{\'El\'ements de math\'ematique.}}, and let $\ia\subseteq R$ be an ideal. For an $R$-module $M$ we consider its small $\ia$-torsion submodule \[\ga_\ia(M)=\{x\in M\mid\exists n\in\N\colon\ia^n\subseteq(0:_Rx)\}\] and its large $\ia$-torsion submodule \[\oga_\ia(M)=\{x\in M\mid\ia\subseteq\sqrt{(0:_Rx)}\}.\] This gives rise to subfunctors $\ga_\ia\hookrightarrow\oga_\ia\hookrightarrow\Id_{\catmod(R)}$. If $R$ is noetherian, the two torsion functors coincide, but they need not do so in general. For an $R$-module $M$ we consider its assassin \[\ass_R(M)=\{\ip\in\spec(R)\mid\exists x\in M\colon\ip=(0:_Rx)\}\] and its weak assassin \[\assf_R(M)=\{\ip\in\spec(R)\mid\exists x\in M\colon\ip\in\min(0:_Rx)\},\] where $\min(\ia)$ denotes the set of minimal primes of an ideal $\ia\subseteq R$. So, we get subsets $\ass_R(M)\subseteq\assf_R(M)\subseteq\spec(R)$. If $R$ is noetherian, the two subsets of $\spec(R)$ coincide, but they need not do so in general. In \cite{sol}, the functors $\ga_\ia$ and $\oga_\ia$ were investigated extensively. In \cite{asstor}, the relations between assassins and weak assassins of $\ga_\ia(M)$, $M$ and $M/\ga_\ia(M)$ were studied. The goal of this work is on one hand to extend the results from \cite{asstor}, and on the other hand to study their analogues for large torsion functors.

Torsion functors, and especially their right derived cohomological functors (i.e., local cohomology), are useful tools in commutative algebra and algebraic geometry. If the ring $R$ is noetherian, then they behave rather nicely (cf. \cite{bs} for a comprehensive treatment from an algebraic point of view). Several approaches to an extension of this theory to non-noetherian rings can be found in the literature. However, in general torsion functors quickly start to behave nastily; we refer the reader to \cite{lipman}, \cite{qr} and \cite{schenzel} for some examples.\medskip

In this article, we consider the following properties of an ideal $\ia$:
\begin{aufz}
\item[(1)] $\ia$ is fair, i.e., $\ass_R(M/\ga_\ia(M))=\ass_R(M)\setminus\var(\ia)$ for any $M$;
\item[($\overline{1}$)] $\ia$ is large fair, i.e., $\ass_R(M/\oga_\ia(M))=\ass_R(M)\setminus\var(\ia)$ for any $M$;
\item[(2)] $\ia$ is weakly fair, i.e., $\assf_R(M/\ga_\ia(M))=\assf_R(M)\setminus\var(\ia)$ for any $M$;
\item[($\overline{2}$)] $\ia$ is weakly large fair, i.e., $\assf_R(M/\oga_\ia(M))=\assf_R(M)\setminus\var(\ia)$ for any $M$;
\item[(3)] $\ia$ is weakly quasifair, i.e., $\assf_R(\ga_\ia(M))=\assf_R(M)\cap\var(\ia)$ for any $M$;
\item[($\overline{3}$)] $\ia$ is weakly large quasifair, i.e., $\assf_R(\oga_\ia(M))=\assf_R(M)\cap\var(\ia)$ for any $M$;
\item[(4)] $\ia$ is half-centred, i.e., $\assf_R(M)\cap\var(\ia)=\emptyset$ for any $M$ with $\ga_\ia(M)=0$;
\item[(5)] $\ia$ is centred, i.e., $\ga_\ia(M)=M$ for any $M$ with $\assf_R(M)\subseteq\var(\ia)$.
\end{aufz}
If $R$ is noetherian, all these statements hold, but none of them need hold in general. We are interested in conditions on $\ia$ under which some of these properties hold, as well as in relations among these properties. In \cite[4.5]{sol} it was shown for example that (4) holds if and only if $\ga_\ia=\oga_\ia$.

We will see that the ``large version'' of (4) holds always and that the ``large version'' of (5) is equivalent to ($\overline{3}$). We will also see that (2)$\Rightarrow$(3)$\Rightarrow$(5)$\Rightarrow$($\overline{3}$)$\Leftarrow$($\overline{2}$). A further result will be the equivalence of (3), (4) and (5) provided $\sqrt{\ia}$ is maximal and $\ga_\ia$ is a radical. Moreover, we will have a look at how the above properties behave when we manipulate $\ia$, e.g., by taking its radical, or by adding to it a further ideal with some of these properties.\medskip

After some rather general results and criteria in Section 2, we will have a closer look at two special classes of ideals, namely idempotent ideals (Section 3) and nil ideals (Section 4). Torsion functors with respect to such ideals behave not too bad. We will be able to show, for example, that idempotent ideals fulfil (1); as an application it will follow that any ideal in an absolutely flat ring has all of the above properties. The case of nil ideals is more complicated, but we will show that for the maximal ideal of a $0$-dimensional local ring there are at most five possibilities concerning fairness and centredness properties. (Unfortunately, for one of these five classes the author was not able to decide whether or not it is empty.) Finally, we will have a brief look at the behaviour of the above properties under localisation in Section 5.\medskip

\textbf{Notation.} We denote by $\spec(R)$ the spectrum of $R$, by $\Max(R)$ the set of maximal ideals of $R$, by $\Min(R)$ the set of minimal prime ideals of $R$, and by $\catmod(R)$ the category of $R$-modules. For a set $I$ we denote by $R[(X_i)_{i\in I}]$ the polynomial algebra over $R$ in the indeterminates $(X_i)_{i\in I}$. We denote by $\var(\ia)$ the variety of $\ia$, by $\min(\ia)$ the set of minimal elements of $\var(\ia)$, and by $\max(\ia)$ the set of maximal element of $\var(\ia)$. For an $R$-module $M$ we denote by $\supp_R(M)$ the support of $M$.


\section{Preliminaries}

We collect basic facts about torsion functors, assassins, and weak assassins. For details we refer the reader to \cite{sol}, \cite[Chapter IV]{ac} (especially Exercice IV.1.17) and \cite[00L9, 0546]{stacks}.

\begin{no}\label{pre03}
Setting \[\ga_\ia(M)\dfgl\{x\in M\mid\exists n\in\N\colon\ia^n\subseteq(0:_Rx)\}\] and \[\oga_\ia(M)\dfgl\{x\in M\mid\ia\subseteq\sqrt{(0:_Rx)}\}\] for every $R$-module $M$ yields subfunctors \[\ga_\ia\hookrightarrow\oga_\ia\hookrightarrow\Id_{\catmod(R)}\] (\cite[3.2]{sol}). The functors  $\ga_\ia$ and $\oga_\ia$ are called \textit{the small $\ia$-torsion functor} and \textit{the large $\ia$-torsion functor;} they need not be equal (\cite[Section 4]{sol}).
\end{no}

\begin{no}\label{pre05}
A) Both functors $\ga_\ia$ and $\oga_\ia$ are left exact, hence for an $R$-module $M$ and a sub-$R$-module $N\subseteq M$ we have $N\cap\ga_\ia(M)=\ga_\ia(N)$ and $N\cap\oga_\ia(M)=\oga_\ia(N)$ (\cite[9.1 A), 9.2]{sol}).\smallskip

B) We have $\ga_\ia=\Id_{\catmod(R)}$ if and only if $\ia$ is nilpotent, and $\oga_\ia=\Id_{\catmod(R)}$ if and only if $\ia$ is nil. Moreover, $\ga_\ia=0$ if and only if $\oga_\ia=0$ if and only if $\ia=R$ (\cite[3.6]{sol}).\smallskip

C) If $\ib\subseteq R$ is an ideal with $\ia\subseteq\ib$, then $\ga_\ib$ and $\oga_\ib$ are subfunctors of $\ga_\ia$ and $\oga_\ia$, resp. Moreover, if $\ib\subseteq R$ is an arbitrary ideal, then $\oga_\ia=\oga_\ib$ if and only if $\sqrt{\ia}=\sqrt{\ib}$. Furthermore, if $n\in\N^*$, then $\ga_\ia=\ga_{\ia^n}$ and $\oga_\ia=\oga_{\ia^n}$ (\cite[3.4, 3.5]{sol}).\smallskip

D) If $\ib\subseteq R$ is an ideal, then $\ga_{\ia+\ib}=\ga_\ia\circ\ga_\ib$ and $\oga_{\ia+\ib}=\oga_\ia\circ\oga_\ib$ (\cite[3.3 D)]{sol}).\smallskip

E) The functor $\oga_\ia$ is a radical, i.e., $\oga_\ia(M/\oga_\ia(M))=0$ for every $R$-module $M$. If $R$ is noetherian or $\ia$ is idempotent, then $\ga_\ia$ is a radical; it need not be so in general, but it may be so even if $\ga_\ia\neq\oga_\ia$ (\cite[Section 5]{sol}).\smallskip

F) If $\ip\in\spec(R)\setminus\var(\ia)$, then $\ga_\ia(R/\ip)=\oga_\ia(R/\ip)=0$. If $\ib\subseteq R$ is an ideal with $\ia\subseteq\ib$, then $\ga_\ia(R/\ib)=\oga_\ia(R/\ib)=R/\ib$. If $M$ is an $R$-module, then $\supp_R(\ga_\ia(M))\subseteq\supp_R(\oga_\ia(M))\subseteq\var(\ia)$.\smallskip

G) If $R\rightarrow S$ is a morphism of rings, then $\ga_\ia(\bullet\res_R)=\ga_{\ia S}(\bullet)\res_R$ and $\oga_\ia(\bullet\res_R)=\oga_{\ia S}(\bullet)\res_R$ as functors from $\catmod(S)$ to $\catmod(R)$ (\cite[3.7 A)]{sol}).
\end{no}

\begin{no}\label{pre06}
An $R$-module $M$ is said to be \textit{of bounded small $\ia$-torsion} if there exists $n\in\N$ with $\ia^n\ga_\ia(M)=0$, and \textit{of bounded large $\ia$-torsion} if there exists $n\in\N$ with $\ia^n\oga_\ia(M)=0$. If $M$ is of bounded large $\ia$-torsion, then $\ga_\ia(M)=\oga_\ia(M)$. If $\ia$ is idempotent, then every $R$-module is of bounded small $\ia$-torsion (\cite[7.4 A), 7.5 a)]{sol}).
\end{no}

\begin{no}\label{pre08}
Let $M$ be an $R$-module. A prime ideal $\ip\subseteq R$ is said to be \textit{associated to $M$} if there exists $x\in M$ with $\ip=(0:_Rx)$, and \textit{weakly associated to $M$} if there exists $x\in M$ with $\ip\in\min(0:_Rx)$. The sets $\ass_R(M)$ and $\assf_R(M)$ of associated and weakly associated primes of $M$ are called \textit{the assassin of $M$} and \textit{the weak assassin of $M$} (\cite[IV.1]{ac}).
\end{no}

\begin{no}\label{pre10}
A) Let $M$ be an $R$-module. We have $\ass_R(M)\subseteq\assf_R(M)$, with equality if $R$ or $M$ is noetherian. Furthermore, $M=0$ if and only if $\assf_R(M)=\emptyset$ (\cite[0589, 058A, 0588]{stacks}, \cite[1.2]{yassemi}).\smallskip

B) If $0\rightarrow L\rightarrow M\rightarrow N\rightarrow 0$ is an exact sequence of $R$-modules, then \[\ass_R(L)\subseteq\ass_R(M)\subseteq\ass_R(L)\cup\ass_R(N)\] and \[\assf_R(L)\subseteq\assf_R(M)\subseteq\assf_R(L)\cup\assf_R(N)\] (\cite[02M3, 0548]{stacks}).\smallskip

C) We have $\assf_R(R/\ia)\subseteq\var(\ia)$. If $\ip\in\spec(R)$, then $\ass_R(R/\ip)=\assf_R(R/\ip)$\linebreak$=\{\ip\}$.\smallskip

D) If $M$ is an $R/\ia$-module, then the isomorphism of ordered sets \[\var(\ia)\rightarrow\spec(R/\ia),\;\ip\mapsto\ip/\ia\] induces by restriction and coastriction bijections \[\assf_R(M\res_R)\rightarrow\assf_{R/\ia}(M)\quad\text{and}\quad\ass_R(M\res_R)\rightarrow\ass_{R/\ia}(M)\] (\cite[05BY, 05C8]{stacks}).
\end{no}

\begin{no}\label{pre11}
A) For a subset $S\subseteq R$ there are canonical monomorphisms of functors \[\rho^S_\ia\colon S^{-1}\ga_\ia(\bullet)\rightarrowtail\ga_{S^{-1}\ia}(S^{-1}\bullet)\quad\text{and}\quad\overline{\rho}^S_\ia\colon S^{-1}\oga_\ia(\bullet)\rightarrowtail\oga_{S^{-1}\ia}(S^{-1}\bullet)\] that need not be isomorphisms (\cite[8.2, 8.3 B)]{sol}). If $S=R\setminus\ip$ for some $\ip\in\spec(R)$, then they are also denoted by $\rho^\ip_\ia$ and $\overline{\rho}^\ip_\ia$.\smallskip

B) For a subset $S\subseteq R$ there is a bijection \[\{\ip\in\assf_R(M)\mid\ip\cap S=\emptyset\}\rightarrow\assf_{S^{-1}R}(S^{-1}M),\;\ip\mapsto S^{-1}\ip.\] By restriction and coastriction it induces an injection \[\{\ip\in\ass_R(M)\mid\ip\cap S=\emptyset\}\rightarrow\ass_{S^{-1}R}(S^{-1}M)\] that need not be surjective (\cite[05C9, 05BZ, ]{stacks}, \cite[IV.1 Exercice 1 c)]{ac}). Note that if $N$ is an $S^{-1}R$-module, then $\{\ip\in\ass_R(N\res_R)\mid\ip\cap S=\emptyset\}=\ass_R(N\res_R)$.
\end{no}

\begin{prop}\label{pre12}
If $M$ is an $R$-module and $N\subseteq M$ is a sub-$R$-module, then\/\footnote{The statement about assassins is part of \cite[IV.1 Exercise 3]{ac}.} \[\ass_R(M/N)\setminus\var(0:_RN)\subseteq\ass_R(M)\text{ and }\assf_R(M/N)\setminus\var(0:_RN)\subseteq\assf_R(M).\]
\end{prop}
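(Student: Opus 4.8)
The plan is to establish both inclusions simultaneously by producing, for a prime $\ip$ lying in $\ass_R(M/N)$ (resp. in $\assf_R(M/N)$) but not in $\var(0:_RN)$, an explicit element of $M$ whose annihilator is $\ip$ (resp. has $\ip$ as a minimal prime).

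First I would fix such a prime $\ip$. Since $\ip\notin\var(0:_RN)$ means $(0:_RN)\not\subseteq\ip$, I may choose $s\in(0:_RN)\setminus\ip$, so that $sN=0$ while $s\notin\ip$. By definition of the (weak) assassin there is $x\in M$ with $\ip=(0:_R(x+N))$ (resp. $\ip\in\min(0:_R(x+N))$), where $(0:_R(x+N))=\{r\in R\mid rx\in N\}$. The key observation is that multiplication by $s$ sends the class $x+N\in M/N$ to the genuine element $y\dfgl sx\in M$, and the claim is that $y$ witnesses membership of $\ip$ in $\ass_R(M)$, resp. in $\assf_R(M)$.

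The computational core is the chain of inclusions $(0:_R(x+N))\subseteq(0:_Ry)\subseteq\ip$. The first holds because $rx\in N$ forces $ry=s(rx)\in sN=0$; the second holds because $ry=0$ gives $rsx\in N$, hence $rs\in(0:_R(x+N))\subseteq\ip$, whence $r\in\ip$ by primality of $\ip$ and $s\notin\ip$. In the assassin case $\ip=(0:_R(x+N))$, so the chain collapses to $(0:_Ry)=\ip$ and $\ip\in\ass_R(M)$. In the weak assassin case $\ip$ is minimal over the (smaller) ideal $(0:_R(x+N))$, hence a fortiori minimal over $(0:_Ry)$, since any prime lying between $(0:_Ry)$ and $\ip$ also lies between $(0:_R(x+N))$ and $\ip$; thus $\ip\in\assf_R(M)$.

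I expect no serious obstacle: the whole argument is the one-line idea of replacing an element of $M/N$ by its multiple by an annihilator of $N$, followed by routine bookkeeping of the three nested colon ideals and the elementary implication $rs\in\ip,\ s\notin\ip\Rightarrow r\in\ip$. The only point deserving a moment's attention is verifying that minimality of $\ip$ genuinely transfers from $(0:_R(x+N))$ to the larger ideal $(0:_Ry)$.
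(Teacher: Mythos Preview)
Your proof is correct and follows essentially the same approach as the paper: both arguments pick an annihilator $s\in(0:_RN)\setminus\ip$ and show that $y=sx$ witnesses $\ip\in\ass_R(M)$ (resp.\ $\assf_R(M)$). Your organisation via the single chain $(0:_R(x+N))\subseteq(0:_Ry)\subseteq\ip$ is slightly more economical than the paper's case-by-case verification, but the content is identical.
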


\begin{proof}
Let $\ip\in\ass_R(M/N)\setminus\var(0:_RN)$. There exist $x\in M\setminus N$ with $\ip=(N:_Rx)$ and $r\in R\setminus\ip$ with $rN=0$. If $s\in(0:_Rrx)$, then $srx=0\in N$, hence $sr\in\ip$, and thus $s\in\ip$. If $t\in\ip$, then $trx=rtx\in rN=0$, and thus $t\in(0:_Rrx)$. This shows that $\ip=(0:_Rrx)$. It follows that $\ip\in\ass_R(M)$.

Let $\ip\in\assf_R(M/N)\setminus\var(0:_RN)$. There exist $x\in M\setminus N$ with $\ip\in\min(N:_Rx)$ and $r\in R\setminus\ip$ with $rN=0$. If $s\in(0:_Rrx)$, then $srx=0\in N$, hence $sr\in\ip$, and thus $s\in\ip$. This shows that $(0:_Rrx)\subseteq\ip$. Let $\iq\in\spec(R)$ with $(0:_Rrx)\subseteq\iq\subseteq\ip$. If $s\in(N:_Rx)$, then $sx\in N$, hence $srx=rsx=0$, and thus $s\in(0:_Rrx)\subseteq\iq$. It follows that $(N:_Rx)\subseteq\iq$, and minimality of $\ip$ implies $\iq=\ip$. Therefore, $\ip\in\min(0:_Rrx)$, and thus $\ip\in\assf_R(M)$.
\end{proof}


\section{Fairness and centredness}

In this section, we recall the fairness and centredness properties introduced in \cite{asstor}. Moreover, we introduce ``large versions'' of these fairness properties, and we show that ``large versions'' of these centredness properties yield nothing new. In the further results, there are three main themes. First, we are interested in how these properties behave under change of the supporting ideal. Second, we look for implications between these properties. And third, we collect criteria for some of these properties.

\newpage
\begin{no}\label{fac10}
A) Let $M$ be an $R$-module. By \cite[3.1]{asstor}, we have the following relations.
\begin{aufz}
\item[a)] $\ass_R(\ga_\ia(M))=\ass_R(M)\cap\var(\ia)$;
\item[b)] $\assf_R(\ga_\ia(M))\subseteq\assf_R(M)\cap\var(\ia)$;
\item[c)] $\ass_R(M/\ga_\ia(M))\supseteq\ass_R(M)\setminus\var(\ia)$;
\item[d)] $\assf_R(M/\ga_\ia(M))\supseteq\assf_R(M)\setminus\var(\ia)$.
\end{aufz}
The $R$-module $M$ is called \textit{weakly $\ia$-quasifair} if \[\assf_R(\ga_\ia(M))=\assf_R(M)\cap\var(\ia),\] \textit{$\ia$-fair} if \[\ass_R(M/\ga_\ia(M))=\ass_R(M)\setminus\var(\ia),\] and \textit{weakly $\ia$-fair} if \[\assf_R(M/\ga_\ia(M))=\assf_R(M)\setminus\var(\ia).\] (In view of \ref{fac30}, these notions could be called ``weakly small $\ia$-quasifair'', ``small $\ia$-fair'' and ``weakly small $\ia$-fair'', but we stick to the less clumsy terminology introduced in \cite{asstor}.)\smallskip

B) The ideal $\ia$ is called \textit{weakly quasifair,} \textit{weakly fair,} or \textit{fair,} resp. if every $R$-module is weakly $\ia$-quasifair, weakly $\ia$-fair, or $\ia$-fair, resp.\smallskip

C) By \cite[3.5]{asstor}, $\ia$ is weakly quasifair, weakly fair, or fair resp. if and only if every monogeneous $R$-module is weakly $\ia$-quasifair, weakly $\ia$-fair, or $\ia$-fair, resp.
\end{no}

\begin{prop}\label{fac20}
Let $M$ be an $R$-module. Then:
\begin{aufz}
\item[a)] $\ass_R(\oga_\ia(M))=\ass_R(M)\cap\var(\ia)=\ass_R(\ga_\ia(M))$;
\item[b)] $\assf_R(\oga_\ia(M))\subseteq\assf_R(M)\cap\var(\ia)$;
\item[c)] $\ass_R(M/\oga_\ia(M))\supseteq\ass_R(M)\setminus\var(\ia)$;
\item[d)] $\assf_R(M/\oga_\ia(M))\supseteq\assf_R(M)\setminus\var(\ia)$.
\end{aufz}
\end{prop}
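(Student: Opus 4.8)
The plan is to mimic the structure of the small-torsion statement \ref{fac10} A), exploiting the inclusion $\ga_\ia\hookrightarrow\oga_\ia$ of functors together with the general fact \ref{pre05} F) that $\supp_R(\oga_\ia(M))\subseteq\var(\ia)$, and then to reduce each claim to the results already available for $\ga_\ia$.

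\textbf{Part a).} First I would prove $\ass_R(\oga_\ia(M))\subseteq\var(\ia)$: every $\ip\in\ass_R(\oga_\ia(M))$ lies in $\supp_R(\oga_\ia(M))$, which is contained in $\var(\ia)$ by \ref{pre05} F). Combined with $\ass_R(\oga_\ia(M))\subseteq\ass_R(M)$ (monotonicity of $\ass_R$ under submodules, \ref{pre10} B)), this yields $\ass_R(\oga_\ia(M))\subseteq\ass_R(M)\cap\var(\ia)$. For the reverse inclusion, let $\ip\in\ass_R(M)\cap\var(\ia)$, so there is $x\in M$ with $\ip=(0:_Rx)$. Since $\ia\subseteq\ip=\sqrt{(0:_Rx)}$, we have $x\in\oga_\ia(M)$, hence $\ip\in\ass_R(\oga_\ia(M))$. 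This gives the first equality; the second, $\ass_R(M)\cap\var(\ia)=\ass_R(\ga_\ia(M))$, is exactly \ref{fac10} A) a).

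\textbf{Part b).} By \ref{pre11} B) (or directly \ref{pre10} B)), $\assf_R(\oga_\ia(M))\subseteq\assf_R(M)$ since $\oga_\ia(M)$ is a submodule of $M$. And every $\ip\in\assf_R(\oga_\ia(M))$ lies in $\supp_R(\oga_\ia(M))\subseteq\var(\ia)$ by \ref{pre05} F); note that for the weak assassin one still has $\assf_R(N)\subseteq\supp_R(N)$, since $\ip\in\assf_R(N)$ forces $N_\ip\neq 0$. Intersecting the two inclusions gives $\assf_R(\oga_\ia(M))\subseteq\assf_R(M)\cap\var(\ia)$.

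\textbf{Parts c) and d).} Here I would use the short exact sequence $0\to\oga_\ia(M)\to M\to M/\oga_\ia(M)\to 0$ together with \ref{pre12} applied to $N=\oga_\ia(M)$: it gives $\ass_R(M/\oga_\ia(M))\setminus\var(0:_R\oga_\ia(M))\subseteq\ass_R(M)$ and the analogous statement for $\assf_R$. Taking complements of $\var(\ia)$ and noting $\ia\subseteq\sqrt{(0:_R\oga_\ia(M))}$ on every finite subset — more cleanly, observing that $\var(0:_R\oga_\ia(M))\subseteq\var(\ia)$ since each element of $\oga_\ia(M)$ is killed by a power of $\ia$ only locally, so I should instead argue directly. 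In fact the clean route is: let $\ip\in\ass_R(M)\setminus\var(\ia)$ with $\ip=(0:_Rx)$; then $x\notin\oga_\ia(M)$ because $\ia\not\subseteq\ip=\sqrt{(0:_Rx)}$, and for $r\in R\setminus\ip$ we check as in the proof of \ref{pre12} that $(0:_R(x+\oga_\ia(M)))=\ip$, whence $\ip\in\ass_R(M/\oga_\ia(M))$; the weak-assassin case d) is the parallel argument with $\min$. The main obstacle is purely bookkeeping: making sure the representative $x$ chosen does not fall into $\oga_\ia(M)$, which is precisely where the hypothesis $\ip\notin\var(\ia)$ enters, exactly as in \ref{fac10} A) c), d). Since $\ga_\ia(M)\subseteq\oga_\ia(M)$, one could alternatively deduce c), d) from \ref{fac10} A) c), d) by a diagram chase, but the direct argument is shorter and I would present that.
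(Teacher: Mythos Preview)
Your arguments for a) and b) are correct and close to the paper's, though you route the inclusion $\assf_R(\oga_\ia(M))\subseteq\var(\ia)$ through the support fact \ref{pre05} F), whereas the paper checks it directly by picking $x\in\oga_\ia(M)$ with $(0:_Rx)\subseteq\ip$ and observing that each $r\in\ia$ satisfies $r^n\in(0:_Rx)\subseteq\ip$. Either way is fine. For the reverse inclusion in a) you give a direct element argument; the paper instead just invokes $\ass_R(M)\cap\var(\ia)=\ass_R(\ga_\ia(M))\subseteq\ass_R(\oga_\ia(M))$ via \ref{fac10} A) a) and $\ga_\ia\hookrightarrow\oga_\ia$.

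For c) and d) your write-up is muddled, and the paper's route is much shorter. Your attempt to use \ref{pre12} cannot work as stated: that proposition bounds $\ass_R(M/N)$ from above by $\ass_R(M)$ outside $\var(0:_RN)$, which is the wrong direction for the inclusion you want, and moreover there is in general no containment $\var(0:_R\oga_\ia(M))\subseteq\var(\ia)$ (only the reverse) --- you noticed this and abandoned it. Your fallback direct argument does go through (for c): if $\ip=(0:_Rx)$ and $\ia\not\subseteq\ip$, then $rx\in\oga_\ia(M)$ forces $a^nr\in\ip$ for some $a\in\ia\setminus\ip$, hence $r\in\ip$, giving $(\oga_\ia(M):_Rx)=\ip$; d) is analogous), but it is more work than needed. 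The paper simply applies \ref{pre10} B) to the exact sequence $0\to\oga_\ia(M)\to M\to M/\oga_\ia(M)\to 0$: from
\[
\ass_R(M)\subseteq\ass_R(\oga_\ia(M))\cup\ass_R(M/\oga_\ia(M))
\]
and $\ass_R(\oga_\ia(M))\subseteq\var(\ia)$ (part a)), one reads off c) immediately, and d) follows the same way from b). That one-line deduction is what you should present.
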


\begin{proof}
We have $\ass_R(\oga_\ia(M))\subseteq\ass_R(M)$ and $\assf_R(\oga_\ia(M))\subseteq\assf_R(M)$ (\ref{pre10} B)). Let $\ip\in\assf_R(\oga_\ia(M))$. There exists $x\in\oga_\ia(M)$ with $(0:_Rx)\subseteq\ip$. For $r\in\ia$ there exists $n\in\N$ with $r^nx=0$, hence $r^n\subseteq(0:_Rx)\subseteq\ip$ and therefore $r\in\ip$. It follows $\ia\subseteq\ip$, hence $\ass_R(\oga_\ia(M))\subseteq\assf_R(\oga_\ia(M))\subseteq\var(\ia)$ (\ref{pre10} A)). So, we have proven b) and the inclusion ``$\subseteq$'' at the first place in a). As \[\ass_R(M)\cap\var(\ia)=\ass_R(\ga_\ia(M))\subseteq\ass_R(\oga_\ia(M))\] (\ref{fac10} A) a), \ref{pre03}, \ref{pre10} B)) we also get the inclusion ``$\supseteq$'' at the first place and the second equality in a). Finally, \[\ass_R(M)\subseteq\ass_R(\oga_\ia(M))\cup\ass_R(M/\oga_\ia(M))\] and \[\assf_R(M)\subseteq\assf_R(\oga_\ia(M))\cup\assf_R(M/\oga_\ia(M))\] (\ref{pre10} B)), thus c) and d) follow from a) and b).
\end{proof}

\begin{no}\label{fac30}
A) An $R$-module $M$ is called \textit{weakly large $\ia$-quasifair} if \[\assf_R(\oga_\ia(M))=\assf_R(M)\cap\var(\ia),\] \textit{large $\ia$-fair} if \[\ass_R(M/\oga_\ia(M))=\ass_R(M)\setminus\var(\ia),\] and \textit{weakly large $\ia$-fair} if \[\assf_R(M/\oga_\ia(M))=\assf_R(M)\setminus\var(\ia).\]

B) The ideal $\ia$ is called \textit{weakly large quasifair,} \textit{weakly large fair,} or \textit{large fair}, resp. if every $R$-module is weakly large $\ia$-quasifair, weakly large $\ia$-fair, or large $\ia$-fair, resp.
\end{no}

\begin{prop}\label{fac40}
The ideal $\ia$ is weakly large quasifair, weakly large fair, or large fair, resp. if and only if every monogeneous $R$-module is weakly large $\ia$-quasifair, weakly large $\ia$-fair, or large $\ia$-fair, resp.
\end{prop}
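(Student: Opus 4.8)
The plan is to prove the three equivalences in parallel, since the structure of the argument is the same in all cases; moreover, only the implication ``monogeneous case $\Rightarrow$ general case'' needs work, the converse being trivial. The key observation is that by Proposition \ref{fac20} one of the two inclusions defining each property holds for every $R$-module with no hypothesis at all: ``$\subseteq$'' in the weak large quasifairness statement (via b)), and ``$\supseteq$'' in the two fairness statements (via c) and d)). So, assuming every monogeneous $R$-module has the relevant property, it suffices to establish the opposite inclusion for an arbitrary $R$-module $M$, and this I would do by passing to a suitable cyclic submodule of $M$ and invoking the left exactness of $\oga_\ia$ from \ref{pre05} A).

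For weak large quasifairness, let $\ip\in\assf_R(M)\cap\var(\ia)$, and choose $x\in M$ with $\ip\in\min(0:_Rx)$, so that $\ip\in\assf_R(Rx)$ with $Rx$ monogeneous. Since $\ip\in\var(\ia)$, the hypothesis applied to $Rx$ gives $\ip\in\assf_R(Rx)\cap\var(\ia)=\assf_R(\oga_\ia(Rx))$. By \ref{pre05} A) we have $\oga_\ia(Rx)=Rx\cap\oga_\ia(M)\subseteq\oga_\ia(M)$, hence $\assf_R(\oga_\ia(Rx))\subseteq\assf_R(\oga_\ia(M))$ by \ref{pre10} B), and therefore $\ip\in\assf_R(\oga_\ia(M))$, as required.

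For (weak) large fairness, let $\ip$ be a (weakly) associated prime of $M/\oga_\ia(M)$, realised by some $\overline{x}\in M/\oga_\ia(M)$; then $\ip$ is a (weakly) associated prime of the monogeneous module $R\overline{x}$. Lift $\overline{x}$ to $x\in M$; the canonical surjection $Rx\twoheadrightarrow R\overline{x}$ has kernel $Rx\cap\oga_\ia(M)=\oga_\ia(Rx)$, again by \ref{pre05} A), so $R\overline{x}\cong Rx/\oga_\ia(Rx)$. Applying the monogeneous hypothesis to $Rx$ yields that $\ip$ lies in $\ass_R(Rx)\setminus\var(\ia)$ (resp. $\assf_R(Rx)\setminus\var(\ia)$), and since $Rx\subseteq M$ we conclude $\ip\in\ass_R(M)\setminus\var(\ia)$ (resp. $\assf_R(M)\setminus\var(\ia)$) by \ref{pre10} B); this is precisely the missing inclusion.

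I do not expect a genuine obstacle here: the statement is the ``large'' mirror of \ref{fac10} C), and the one point that must be handled with care is the identification $R\overline{x}\cong Rx/\oga_\ia(Rx)$ (together with $\oga_\ia(Rx)=Rx\cap\oga_\ia(M)$ in the quasifair case), which is exactly where left exactness of $\oga_\ia$ enters and where the proof of \cite[3.5]{asstor} for the small torsion functor carries over verbatim. Since $\oga_\ia$, like $\ga_\ia$, is a left exact subfunctor of $\Id_{\catmod(R)}$ commuting with formation of submodules, the reduction to monogeneous modules goes through without change.
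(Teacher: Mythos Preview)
Your proposal is correct and follows essentially the same argument as the paper's own proof: reduce to the non-automatic inclusion via \ref{fac20}, pick a witness $x$ for the given (weakly) associated prime, and use left exactness of $\oga_\ia$ (\ref{pre05} A)) to identify $\oga_\ia(Rx)=Rx\cap\oga_\ia(M)$ and hence $R\overline{x}\cong Rx/\oga_\ia(Rx)$, then apply the monogeneous hypothesis and \ref{pre10} B). The only cosmetic difference is notation ($Rx$ versus $\langle x\rangle_R$) and the order of presentation.
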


\begin{proof} 
Let $M$ be an $R$-module. Suppose that every monogeneous $R$-module is weakly large $\ia$-quasifair. Let $\ip\in\assf_R(M)\cap\var(\ia)$. There exists $x\in M$ with $\ip\in\min(0:_Rx)$. It follows that \[\ip\in\assf_R(\langle x\rangle_R)\cap\var(\ia)=\assf_R(\oga_\ia(\langle x\rangle_R))\subseteq\assf_R(\oga_\ia(M))\] (\ref{pre10} B), \ref{pre05} A)), and thus $M$ is weakly large $\ia$-quasifair (\ref{fac20} b)).

Next, suppose that every monogeneous $R$-module is large $\ia$-fair or weakly large $\ia$-fair, resp. Let $\ip\in\ass_R(M/\oga_\ia(M))$ or $\ip\in\assf_R(M/\oga_\ia(M))$, resp. There exists $x\in M$ with $\ip=(0:_R\overline{x})$ or $\ip\in\min(0:_R\overline{x})$, resp., where $\overline{x}$ denotes the canonical image of $x$ in $M/\oga_\ia(M)$. Then, $\langle x\rangle_R/\oga_\ia(\langle x\rangle_R)\cong\langle\overline{x}\rangle_R$ (\ref{pre05} A)), and thus \[\ip\in\ass_R(\langle\overline{x}\rangle_R)=\ass_R(\langle x\rangle_R/\oga_\ia(\langle x\rangle_R))=\ass_R(\langle x\rangle_R)\setminus\var(\ia)\subseteq\ass_R(M)\setminus\var(\ia)\] or \[\ip\in\assf_R(\langle\overline{x}\rangle_R)=\assf_R(\langle x\rangle_R/\oga_\ia(\langle x\rangle_R))=\assf_R(\langle x\rangle_R)\setminus\var(\ia)\subseteq\assf_R(M)\setminus\var(\ia),\] resp. (\ref{pre10} B)). Thus, $M$ is large $\ia$-fair or weakly large $\ia$-fair, resp. (\ref{fac20} c), d)). 
\end{proof}

\begin{prop}\label{fac60}
Let $M$ be an $R$-module. Then:
\begin{aufz}
\item[a)] $\assf_R(M)\cap\var(\ia)=\emptyset\Rightarrow\oga_\ia(M)=0\Rightarrow\ga_\ia(M)=0\Rightarrow\ass_R(M)\cap\var(\ia)=\emptyset$;
\item[b)] $\ga_\ia(M)=M\Rightarrow\assf_R(M)\subseteq\var(\ia)\Leftrightarrow\oga_\ia(M)=M$.
\end{aufz}
\end{prop}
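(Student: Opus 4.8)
The plan is to verify the three implications in part a) one by one, then handle part b) separately, using the preliminary facts freely.

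\textbf{Part a).} For the first implication, suppose $\assf_R(M)\cap\var(\ia)=\emptyset$. By \ref{fac20} b) we have $\assf_R(\oga_\ia(M))\subseteq\assf_R(M)\cap\var(\ia)=\emptyset$, hence $\oga_\ia(M)=0$ by \ref{pre10} A). The second implication, $\oga_\ia(M)=0\Rightarrow\ga_\ia(M)=0$, is immediate from the inclusion of functors $\ga_\ia\hookrightarrow\oga_\ia$ in \ref{pre03}. For the third implication, suppose $\ga_\ia(M)=0$; then by \ref{fac10} A) a) we have $\ass_R(M)\cap\var(\ia)=\ass_R(\ga_\ia(M))=\ass_R(0)=\emptyset$.

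\textbf{Part b).} Suppose $\ga_\ia(M)=M$. Then $M=\ga_\ia(M)\subseteq\oga_\ia(M)\subseteq M$, so $\oga_\ia(M)=M$ as well, which gives the first implication once we have the equivalence. For the equivalence $\assf_R(M)\subseteq\var(\ia)\Leftrightarrow\oga_\ia(M)=M$: if $\oga_\ia(M)=M$, then $\assf_R(M)=\assf_R(\oga_\ia(M))\subseteq\var(\ia)$ by \ref{fac20} b). Conversely, suppose $\assf_R(M)\subseteq\var(\ia)$. Consider the quotient $N\dfgl M/\oga_\ia(M)$. Since $\oga_\ia$ is a radical (\ref{pre05} E)), we have $\oga_\ia(N)=0$, so by the first implication of part a) applied to $N$ — or directly by \ref{fac20} b) — we get $\assf_R(N)\cap\var(\ia)=\emptyset$. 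But $\assf_R(N)=\assf_R(M/\oga_\ia(M))\subseteq\assf_R(M)\subseteq\var(\ia)$ by \ref{pre10} B) and hypothesis, hence $\assf_R(N)\subseteq\assf_R(N)\cap\var(\ia)=\emptyset$, so $N=0$ by \ref{pre10} A), i.e. $\oga_\ia(M)=M$.

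I do not expect a genuine obstacle here: every step reduces to a cited preliminary, the only mild subtlety being that in part b) one must pass to the quotient $M/\oga_\ia(M)$ and exploit that $\oga_\ia$ is a radical (which fails for $\ga_\ia$ in general, explaining why the analogue with $\ga_\ia(M)=M$ is only an implication and not part of the equivalence).
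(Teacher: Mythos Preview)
Your argument for part a) is correct and coincides with the paper's (the paper cites exactly \ref{fac20} b), \ref{pre10} A), \ref{pre03}, and \ref{fac10} A) a)). In part b), the implication $\ga_\ia(M)=M\Rightarrow\oga_\ia(M)=M$ and the direction $\oga_\ia(M)=M\Rightarrow\assf_R(M)\subseteq\var(\ia)$ of the equivalence are also fine.

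The gap is in the converse direction $\assf_R(M)\subseteq\var(\ia)\Rightarrow\oga_\ia(M)=M$. Both steps you invoke are unjustified. First, from $\oga_\ia(N)=0$ you conclude $\assf_R(N)\cap\var(\ia)=\emptyset$, citing ``the first implication of part a)'' or \ref{fac20} b). But the first implication of a) goes the \emph{other} way, and \ref{fac20} b) only gives $\assf_R(\oga_\ia(N))\subseteq\assf_R(N)\cap\var(\ia)$, which becomes the vacuous $\emptyset\subseteq\assf_R(N)\cap\var(\ia)$ when $\oga_\ia(N)=0$. In fact the implication $\oga_\ia(N)=0\Rightarrow\assf_R(N)\cap\var(\ia)=\emptyset$ for all $N$ is precisely condition (ii) of \ref{fac100}, i.e.\ weak large quasifairness of $\ia$, which does \emph{not} hold in general. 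Second, you claim $\assf_R(M/\oga_\ia(M))\subseteq\assf_R(M)$ ``by \ref{pre10} B)'', but \ref{pre10} B) bounds only the weak assassin of the \emph{middle} term of a short exact sequence, not of the quotient; weak associated primes of a quotient need not lie in $\assf_R(M)$.

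A direct argument repairs this: let $x\in M\setminus\{0\}$. Every $\ip\in\min(0:_Rx)$ lies in $\assf_R(M)\subseteq\var(\ia)$, so $\ia\subseteq\ip$ for all such $\ip$, whence $\ia\subseteq\bigcap_{\ip\in\min(0:_Rx)}\ip=\sqrt{(0:_Rx)}$ and $x\in\oga_\ia(M)$. (The paper itself simply cites \cite[4.2]{sol} for b).)
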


\begin{proof}
a) The first implication follows from \ref{fac20} b) and \ref{pre10} A), the second from \ref{pre03}, and the third from \ref{fac10} a). b) holds by \cite[4.2]{sol}.
\end{proof}

\begin{no}\label{fac63}
A) The ideal $\ia$ is called \textit{centred} if \[\ga_\ia(M)=0\Leftrightarrow\assf_R(M)\cap\var(\ia)=\emptyset\] for every $R$-module $M$, \textit{half-centred} if \[\ga_\ia(M)=M\Leftrightarrow\assf_R(M)\subseteq\var(\ia)\] for every $R$-module $M$, and \textit{well-centred} if it is centred and half-centred. (These notions could be called ``small centred'', ``small half-centred'' and ``small well-centred'', but we will recognise this as superfluous in \ref{fac110}.)

B) By \cite[4.5]{sol}, $\ia$ is half-centred if and only if $\ga_\ia=\oga_\ia$.\smallskip

C) By B), \ref{pre05} E) and \cite[4.4]{asstor}, $\ia$ is well-centred if and only if $\ia$ is centred and $\ga_\ia$ is a radical.
\end{no}

\begin{prop}\label{fac70}
a) The ideal $\ia$ is centred if and only if $\assf_R(M)\cap\var(\ia)=\emptyset$ for every monogeneous $R$-module $M$ with $\ga_\ia(M)=0$.

b) The ideal $\ia$ is half-centred if and only if $\ga_\ia(M)=M$ for every monogeneous $R$-module $M$ with $\assf_R(M)\subseteq\var(\ia)$.
\end{prop}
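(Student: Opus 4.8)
The plan is to prove both equivalences by the same reduction to monogeneous modules already used for \ref{fac10} C) and \ref{fac40}. In each case the forward implication is trivial: if $\ia$ is centred (resp. half-centred), the defining equivalence holds for every $R$-module, hence a fortiori for every monogeneous one. Moreover, by \ref{fac60} the implications $\assf_R(M)\cap\var(\ia)=\emptyset\Rightarrow\ga_\ia(M)=0$ and $\ga_\ia(M)=M\Rightarrow\assf_R(M)\subseteq\var(\ia)$ hold unconditionally; so in each biconditional defining ``centred'' resp. ``half-centred'' only one implication carries content, and that is the one I must transport from the monogeneous case to the general case.

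For a), I would argue by contradiction. Assuming the stated condition on monogeneous modules, let $M$ be an $R$-module with $\ga_\ia(M)=0$, and suppose there were $\ip\in\assf_R(M)\cap\var(\ia)$. Choose $x\in M$ with $\ip\in\min(0:_Rx)$. Then $\langle x\rangle_R$ is monogeneous, and by left exactness of $\ga_\ia$ (\ref{pre05} A)) we have $\ga_\ia(\langle x\rangle_R)=\langle x\rangle_R\cap\ga_\ia(M)=0$, while $\ip\in\assf_R(\langle x\rangle_R)\cap\var(\ia)$ because $x\in\langle x\rangle_R$. This contradicts the hypothesis, so $\assf_R(M)\cap\var(\ia)=\emptyset$ and $\ia$ is centred.

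For b), I would argue directly. Assuming the stated condition on monogeneous modules, let $M$ be an $R$-module with $\assf_R(M)\subseteq\var(\ia)$, and fix $x\in M$. Then $\assf_R(\langle x\rangle_R)\subseteq\assf_R(M)\subseteq\var(\ia)$ by \ref{pre10} B), so the hypothesis gives $\ga_\ia(\langle x\rangle_R)=\langle x\rangle_R$; in particular $x\in\ga_\ia(\langle x\rangle_R)=\langle x\rangle_R\cap\ga_\ia(M)\subseteq\ga_\ia(M)$, again by \ref{pre05} A). As $x$ was arbitrary, $\ga_\ia(M)=M$, so $\ia$ is half-centred.

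I do not expect a genuine obstacle: the only points requiring care are the appeal to \ref{fac60} to identify the non-trivial direction of each equivalence, and the correct use of left exactness of $\ga_\ia$ (\ref{pre05} A)) together with monotonicity of $\assf_R$ under passage to submodules (\ref{pre10} B)) to move between $M$ and $\langle x\rangle_R$; the rest is routine.
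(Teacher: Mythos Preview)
Your proof is correct and essentially identical to the paper's: both reduce to the cyclic submodule $\langle x\rangle_R$, use left exactness of $\ga_\ia$ (\ref{pre05} A)) to get $\ga_\ia(\langle x\rangle_R)=0$ in a) and $x\in\ga_\ia(M)$ in b), and use \ref{pre10} B) to control $\assf_R(\langle x\rangle_R)$ in b). The only cosmetic difference is that you phrase a) by contradiction while the paper argues directly, and you make the appeal to \ref{fac60} for the trivial directions explicit where the paper simply writes ``the converse is clear.''
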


\begin{proof}
a) Suppose that $\assf_R(M)\cap\var(\ia)=\emptyset$ for every monogeneous $R$-module $M$ with $\ga_\ia(M)=0$. Let $M$ be an $R$-module with $\ga_\ia(M)=0$. Let $\ip\in\assf_R(M)$. There exists $x\in M$ with $\ip\in\min(0:_Rx)$. Then, $\ga_\ia(\langle x\rangle_R)=0$ (\ref{pre05} A)) and $\ip\in\assf_R(\langle x\rangle_R)$, hence $\ip\notin\var(\ia)$. It follows that $\ia$ is centred. The converse is clear.

b) Suppose that $\ga_\ia(M)=M$ for every monogeneous $R$-module $M$ with\linebreak $\assf_R(M)\subseteq\var(\ia)$. Let $M$ be an $R$-module with $\assf_R(M)\subseteq\var(\ia)$. Let $x\in M$. Then, $\assf_R(\langle x\rangle_R)\subseteq\assf_R(M)\subseteq\var(\ia)$ (\ref{pre10} B)), hence $\ga_\ia(\langle x\rangle_R)=\langle x\rangle_R$, and therefore $x\in\ga_\ia(M)$ (\ref{pre05} A)). This shows that $\ga_\ia(M)=M$. It follows that $\ia$ is half-centred. The converse is clear.
\end{proof}

\begin{prop}\label{fac100}
The following statements are equivalent:
\begin{equi}
\item[(i)] $\ia$ is weakly large quasifair;
\item[(ii)] $\oga_\ia(M)=0\Leftrightarrow\assf_R(M)\cap\var(\ia)=\emptyset$ for every $R$-module $M$.
\end{equi}
\end{prop}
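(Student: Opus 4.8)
The plan is to prove (i)$\Rightarrow$(ii) and (ii)$\Rightarrow$(i) separately, leaning on \ref{fac20} b) and \ref{fac60} a). For the direction (i)$\Rightarrow$(ii), fix an $R$-module $M$ and assume $\ia$ is weakly large quasifair, so $\assf_R(\oga_\ia(M))=\assf_R(M)\cap\var(\ia)$. The implication ``$\oga_\ia(M)=0\Rightarrow\assf_R(M)\cap\var(\ia)=\emptyset$'' is immediate since $\oga_\ia(M)=0$ forces $\assf_R(\oga_\ia(M))=\emptyset$ by \ref{pre10} A). Conversely, if $\assf_R(M)\cap\var(\ia)=\emptyset$, then $\assf_R(\oga_\ia(M))=\emptyset$, hence $\oga_\ia(M)=0$ again by \ref{pre10} A). So this direction is essentially just unwinding definitions together with the characterisation of the zero module via its weak assassin.

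For the direction (ii)$\Rightarrow$(i), the idea is to use \ref{pre05} A) to pass to submodules, together with \ref{fac60} a). Let $M$ be an $R$-module; by \ref{fac20} b) it suffices to show $\assf_R(M)\cap\var(\ia)\subseteq\assf_R(\oga_\ia(M))$. Take $\ip\in\assf_R(M)\cap\var(\ia)$; there is $x\in M$ with $\ip\in\min(0:_Rx)$, so $\ip\in\assf_R(\langle x\rangle_R)\cap\var(\ia)$. Now I would apply (ii) to the module $\langle x\rangle_R/\oga_\ia(\langle x\rangle_R)$: by \ref{pre05} E) (the fact that $\oga_\ia$ is a radical) we have $\oga_\ia(\langle x\rangle_R/\oga_\ia(\langle x\rangle_R))=0$, so (ii) gives $\assf_R(\langle x\rangle_R/\oga_\ia(\langle x\rangle_R))\cap\var(\ia)=\emptyset$. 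Combined with the exact sequence $0\to\oga_\ia(\langle x\rangle_R)\to\langle x\rangle_R\to\langle x\rangle_R/\oga_\ia(\langle x\rangle_R)\to 0$ and \ref{pre10} B), the prime $\ip\in\assf_R(\langle x\rangle_R)\cap\var(\ia)$ cannot lie in $\assf_R(\langle x\rangle_R/\oga_\ia(\langle x\rangle_R))$, hence $\ip\in\assf_R(\oga_\ia(\langle x\rangle_R))\subseteq\assf_R(\oga_\ia(M))$ by \ref{pre05} A). This yields $M$ weakly large $\ia$-quasifair.

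The main subtlety — though it is a mild one — is making sure the implications in (ii) are exploited in the right direction: (ii) is a biconditional holding for \emph{every} module, and the useful consequence for the proof of (i) is its application to a quotient module which is $\oga_\ia$-torsion-free by the radical property \ref{pre05} E). Without that radical property one could not conclude that $\oga_\ia$ of the quotient vanishes, so invoking \ref{pre05} E) is the one nonformal ingredient. Everything else is a routine combination of \ref{pre10} A), B), \ref{fac20} b), and \ref{pre05} A); there is no real obstacle, and the argument closely parallels the proof of \ref{fac40}. One could alternatively phrase (ii)$\Rightarrow$(i) via \ref{fac100}'s own statement applied to $M$ directly, using $\assf_R(M)\supseteq\assf_R(\oga_\ia(M))$ and that $\oga_\ia(\oga_\ia(M))=\oga_\ia(M)$ to handle the reverse inclusion, but the submodule-reduction route above is cleaner and matches the style of \ref{fac40} and \ref{fac70}.
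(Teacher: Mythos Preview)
Your proof is correct and rests on the same ingredients as the paper's: the radical property \ref{pre05} E), the exact-sequence inclusion \ref{pre10} B), and \ref{fac20} b). The only difference is that for (ii)$\Rightarrow$(i) the paper works directly with $M$: since $\oga_\ia(M/\oga_\ia(M))=0$ by \ref{pre05} E), condition (ii) applied to $M/\oga_\ia(M)$ gives $\assf_R(M/\oga_\ia(M))\cap\var(\ia)=\emptyset$, and then \ref{pre10} B) yields $\assf_R(M)\cap\var(\ia)\subseteq\assf_R(\oga_\ia(M))$. Your reduction to the cyclic submodule $\langle x\rangle_R$ (in the style of \ref{fac40}) is correct but superfluous here, since the argument already goes through for $M$ itself; the alternative you sketch at the end (applying (ii) to $M$ via $\oga_\ia(\oga_\ia(M))=\oga_\ia(M)$) is also not quite the paper's route --- the key is to apply (ii) to the \emph{quotient} $M/\oga_\ia(M)$.
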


\begin{proof}
If (i) holds and $M$ is an $R$-module with $\oga_\ia(M)=0$, then\linebreak $\assf_R(M)\cap\var(\ia)=\assf_R(\oga_\ia(M))=\emptyset$, hence \ref{fac60} a) yields (ii). If (ii) holds and $M$ is an $R$-module, then $\assf_R(M/\oga_\ia(M))\cap\var(\ia)=\emptyset$ (\ref{pre05} E)), hence \[\assf_R(M)\cap\var(\ia)\subseteq\assf_R(\oga_\ia(M))\cup(\assf_R(M/\oga_\ia(M))\cap\var(\ia))=\assf_R(\oga_\ia(M))\] (\ref{pre10} B)), and thus \ref{fac20} b) yields (i).
\end{proof}

\begin{no}\label{fac110}
As every ideal is ``large half-centred'' by \ref{fac60} b) and ``large centredness'' is equivalent to weak large quasifairness by \ref{fac100}, there is no need to introduce large centredness notions. Therefore, we will stick to the terminology for small centredness introduced in \cite{asstor}.
\end{no}

\begin{no}\label{fac115}
A) Let $\ib\subseteq R$ be an ideal and let $M$ be an $R/\ib$-module. It follows from \ref{pre10} D) and \ref{pre05} G) that $M$ is (large) $(\ia+\ib)/\ib$-fair, weakly (large) $(\ia+\ib)/\ib$-fair, or weakly (large) $(\ia+\ib)/\ib$-quasifair, resp. if and only if $M\res_R$ is (large) $\ia$-fair, weakly (large) $\ia$-fair, or weakly (large) $\ia$-quasifair, resp.\smallskip

B) Let $\ib\subseteq R$ be an ideal. It follows from A) that if $\ia$ is (large) fair, weakly (large) fair, or weakly (large) quasifair, resp., then so is $(\ia+\ib)/\ib$.\smallskip

C) Let $\ib\subseteq R$ be an ideal. It follows from \ref{pre10} D) and \ref{pre05} G) that if $\ia$ is centred, half-centred, or well-centred, resp., then so is $(\ia+\ib)/\ib$.\smallskip

D) Let $\ib\subseteq R$ be an ideal. It follows from C) and \ref{fac63} B) that if $\ga_\ia=\oga_\ia$, then $\ga_{(\ia+\ib)/\ib}=\oga_{(\ia+\ib)/\ib}$.
\end{no}

\begin{prop}\label{fac120}
For an $R$-module $M$ we have the following implications: \[\xymatrix@R15pt{M\text{ is weakly }\ia\text{-fair}\ar@{=>}[d]&M\text{ is weakly large }\ia\text{-fair}\ar@{=>}[d]\\M\text{ is weakly }\ia\text{-quasifair}\ar@{=>}[r]&M\text{ is weakly large }\ia\text{-quasifair}.}\]
\end{prop}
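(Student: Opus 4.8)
The plan is to establish the four implications in the diagram separately, using the elementary relations recorded in \ref{fac10} A), \ref{fac20}, and the comparison $\ga_\ia\hookrightarrow\oga_\ia$ from \ref{pre03}, together with the exact-sequence behaviour of (weak) assassins in \ref{pre10} B). Throughout, fix the $R$-module $M$ and write $\oga_\ia(M)/\ga_\ia(M)$ for the large-over-small torsion quotient; the key structural input is that this quotient sits inside $M/\ga_\ia(M)$ and is itself a torsion module, indeed $\supp_R(\oga_\ia(M))\subseteq\var(\ia)$ by \ref{pre05} F).

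First I would prove the left vertical implication, ``weakly $\ia$-fair $\Rightarrow$ weakly $\ia$-quasifair''. Assuming $\assf_R(M/\ga_\ia(M))=\assf_R(M)\setminus\var(\ia)$, apply \ref{pre10} B) to the exact sequence $0\to\ga_\ia(M)\to M\to M/\ga_\ia(M)\to 0$ to get $\assf_R(M)\subseteq\assf_R(\ga_\ia(M))\cup\assf_R(M/\ga_\ia(M))$. Intersecting with $\var(\ia)$ and using that $\assf_R(M/\ga_\ia(M))\cap\var(\ia)=\emptyset$ (it equals $\assf_R(M)\setminus\var(\ia)$, which is disjoint from $\var(\ia)$), we obtain $\assf_R(M)\cap\var(\ia)\subseteq\assf_R(\ga_\ia(M))$; the reverse inclusion is \ref{fac10} A) b), so equality holds. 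The right vertical implication ``weakly large $\ia$-fair $\Rightarrow$ weakly large $\ia$-quasifair'' is identical with $\oga_\ia$ in place of $\ga_\ia$, using \ref{fac20} b) and the fact that $\assf_R(M/\oga_\ia(M))\cap\var(\ia)=\emptyset$ (which also follows directly from \ref{pre05} E) via \ref{pre10} C)-style reasoning, or simply from the weakly-large-$\ia$-fair hypothesis).

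Next the bottom horizontal implication, ``weakly $\ia$-quasifair $\Rightarrow$ weakly large $\ia$-quasifair''. Assume $\assf_R(\ga_\ia(M))=\assf_R(M)\cap\var(\ia)$. By \ref{fac20} b) we already have $\assf_R(\oga_\ia(M))\subseteq\assf_R(M)\cap\var(\ia)$, so it suffices to show $\assf_R(M)\cap\var(\ia)\subseteq\assf_R(\oga_\ia(M))$. But $\assf_R(M)\cap\var(\ia)=\assf_R(\ga_\ia(M))\subseteq\assf_R(\oga_\ia(M))$ by the hypothesis and \ref{pre10} B) applied to the inclusion $\ga_\ia(M)\hookrightarrow\oga_\ia(M)$ (\ref{pre03}). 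Done. Finally, the top horizontal implication ``weakly $\ia$-fair $\Rightarrow$ weakly large $\ia$-fair'': assume $\assf_R(M/\ga_\ia(M))=\assf_R(M)\setminus\var(\ia)$. By \ref{fac20} d) we have $\assf_R(M/\oga_\ia(M))\supseteq\assf_R(M)\setminus\var(\ia)$, so only ``$\subseteq$'' needs work. Apply \ref{pre10} B) to $0\to\oga_\ia(M)/\ga_\ia(M)\to M/\ga_\ia(M)\to M/\oga_\ia(M)\to 0$: this gives $\assf_R(M/\oga_\ia(M))\subseteq\assf_R(M/\ga_\ia(M))=\assf_R(M)\setminus\var(\ia)$, which is exactly what we want.

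The main obstacle, such as it is, is bookkeeping: making sure in the top horizontal implication that the quotient $(M/\ga_\ia(M))\big/(\oga_\ia(M)/\ga_\ia(M))$ is canonically $M/\oga_\ia(M)$ (third isomorphism theorem) so that \ref{pre10} B) applies cleanly, and in the vertical implications that the set $\assf_R(M/\ga_\ia(M))$ being equal to $\assf_R(M)\setminus\var(\ia)$ genuinely forces disjointness from $\var(\ia)$ — which it does tautologically, but one should state it. Everything else is a direct chase through \ref{pre10} B) and the already-proven inclusions in \ref{fac10} A) and \ref{fac20}; no module is ever assumed monogeneous, so no appeal to \ref{fac40} or \ref{pre11} is needed, and the four arguments are genuinely short.
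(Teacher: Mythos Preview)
Your proofs of the three implications that are actually in the diagram --- the two vertical arrows and the bottom horizontal --- are correct and essentially identical to the paper's (the paper cites \cite[3.3]{asstor} for the left vertical and writes out the other two just as you do, via \ref{pre10} B), \ref{fac20} b), and the inclusion $\ga_\ia(M)\subseteq\oga_\ia(M)$).

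However, you have misread the diagram: there is no top horizontal arrow. The proposition does \emph{not} assert that weakly $\ia$-fair implies weakly large $\ia$-fair, and your attempted proof of that extra implication contains a genuine error. From the exact sequence $0\to\oga_\ia(M)/\ga_\ia(M)\to M/\ga_\ia(M)\to M/\oga_\ia(M)\to 0$, \ref{pre10} B) yields only $\assf_R(\oga_\ia(M)/\ga_\ia(M))\subseteq\assf_R(M/\ga_\ia(M))\subseteq\assf_R(\oga_\ia(M)/\ga_\ia(M))\cup\assf_R(M/\oga_\ia(M))$; it gives no containment of $\assf_R(M/\oga_\ia(M))$ inside $\assf_R(M/\ga_\ia(M))$. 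Weak assassins of quotients are simply not controlled this way. The paper makes no such claim, and indeed in \ref{fac130} b) the equivalence of weak fairness and weak large fairness is asserted only under the additional hypothesis that $\ia$ is half-centred (where $\ga_\ia=\oga_\ia$ and the implication is trivial). So drop the fourth paragraph of your argument; what remains is exactly what is needed.
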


\begin{proof}
The left vertical implication holds by \cite[3.3]{asstor}. For a weakly large $\ia$-fair $R$-module $M$ we have \[\assf_R(M)\cap\var(\ia)\subseteq\assf_R(\oga_\ia(M))\cup(\assf_R(M/\oga_\ia(M))\cap\var(\ia))=\]\[\assf_R(\oga_\ia(M))\cup((\assf_R(M)\setminus\var(\ia))\cap\var(\ia))=\assf_R(\oga_\ia(M))\] (\ref{pre10} B)), so the right vertical implication follows from \ref{fac20} b). For a weakly $\ia$-quasifair $R$-module $M$ we have \[\assf_R(M)\cap\var(\ia)=\assf_R(\ga_\ia(M))\subseteq\assf_R(\oga_\ia(M))\subseteq\assf_R(M)\cap\var(\ia)\] (\ref{pre10} B), \ref{pre03}), so the horizontal implication follows from \ref{fac20} b).
\end{proof}

\begin{prop}\label{fac130}
a) We have the following implications: \[\xymatrix@R15pt{\ia\text{ is weakly fair}\ar@{=>}[d]&&\ia\text{ is weakly large fair}\ar@{=>}[d]\\\ia\text{ is weakly quasifair}\ar@{=>}[r]&\ia\text{ is centred}\ar@{=>}[r]&\ia\text{ is weakly large quasifair}.}\]

b) If $\ia$ is half-centred, we have the following implications: \[\xymatrix@R15pt{\ia\text{ is weakly fair}\qquad\ar@{<=>}[rr]\ar@{=>}[d]&&\qquad\ia\text{ is weakly large fair}\ar@{=>}[d]\\\ia\text{ is weakly quasifair}\ar@{<=>}[r]&\ia\text{ is centred}\ar@{<=>}[r]&\ia\text{ is weakly large quasifair}.}\]
\end{prop}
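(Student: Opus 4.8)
The plan is to assemble the statement from the results already in hand; all the substance sits in \ref{fac60}, \ref{fac100}, \ref{fac120} and \ref{fac63}, so both parts amount to tracking definitions.

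For part a) I would argue the four arrows separately. The two vertical arrows ``weakly (large) fair $\Rightarrow$ weakly (large) quasifair'' are immediate from \ref{fac120}: if every $R$-module is weakly (large) $\ia$-fair, then by the vertical implications of \ref{fac120} every $R$-module is weakly (large) $\ia$-quasifair. For ``weakly quasifair $\Rightarrow$ centred'', I would observe that weak quasifairness gives $\assf_R(\ga_\ia(M))=\assf_R(M)\cap\var(\ia)$ for every $R$-module $M$, and since by \ref{pre10} A) a module vanishes exactly when its weak assassin is empty, this says $\ga_\ia(M)=0\Leftrightarrow\assf_R(M)\cap\var(\ia)=\emptyset$, i.e. $\ia$ is centred in the sense of \ref{fac63} A). For ``centred $\Rightarrow$ weakly large quasifair'', I would invoke \ref{fac100}: it suffices to show $\oga_\ia(M)=0\Leftrightarrow\assf_R(M)\cap\var(\ia)=\emptyset$ for every $M$. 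The chain in \ref{fac60} a) already yields $\assf_R(M)\cap\var(\ia)=\emptyset\Rightarrow\oga_\ia(M)=0\Rightarrow\ga_\ia(M)=0$, and centredness closes the cycle via $\ga_\ia(M)=0\Rightarrow\assf_R(M)\cap\var(\ia)=\emptyset$; hence all three conditions are equivalent, and in particular the one \ref{fac100} asks for holds.

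For part b) I would start from \ref{fac63} B), which identifies half-centredness of $\ia$ with the equality $\ga_\ia=\oga_\ia$ of functors. Under this hypothesis the defining conditions of ``weakly $\ia$-fair'' and ``weakly large $\ia$-fair'' coincide verbatim (likewise for the two quasifairness notions), giving at once the top equivalence and the bottom-row equivalence ``weakly quasifair $\Leftrightarrow$ weakly large quasifair''. It then remains only to splice ``centred'' into the bottom row: ``weakly quasifair $\Rightarrow$ centred'' and ``centred $\Rightarrow$ weakly large quasifair'' are both part a), so together with the equivalence just noted the three bottom conditions become mutually equivalent, while the two vertical implications are again those of part a).

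I do not expect a genuine obstacle here; the only place calling for a little care is the step ``centred $\Rightarrow$ weakly large quasifair'', where one has to notice that \ref{fac60} a) together with centredness forces the chain of implications there to collapse into a cycle, so that the criterion \ref{fac100} becomes applicable. Everything else is unwinding definitions and quoting \ref{fac120}, \ref{fac63} and \ref{pre10}.
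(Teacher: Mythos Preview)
Your proposal is correct and follows essentially the same route as the paper: the vertical arrows via \ref{fac120}, the step ``centred $\Rightarrow$ weakly large quasifair'' via \ref{fac60} a) and \ref{fac100}, and part b) via \ref{fac63} B) all match the paper's argument. The only cosmetic difference is that for ``weakly quasifair $\Rightarrow$ centred'' the paper cites \cite[4.4]{asstor} (remarking that the radical hypothesis there was superfluous), whereas you supply the one-line direct argument from \ref{pre10} A); your version is self-contained and arguably cleaner.
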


\begin{proof}
a) The vertical implications are clear by \ref{fac120}. The first horizontal implication was shown in \cite[4.4]{asstor} under the hypothesis that $\ga_\ia$ is a radical, but no use was made of this hypothesis. The second horizontal implication follows from \ref{fac60} a) and \ref{fac100}. b) follows from a) and \ref{fac63} B).
\end{proof}

\begin{no}\label{fac131}
A) Let $\ib\subseteq R$ be an ideal with $\ia\subseteq\ib\subseteq\sqrt{\ia}$ (e.g. $\ia=\ib^n$ for some $n\in\N^*$, or $\ib=\sqrt{\ia}$). Then, $\var(\ia)=\var(\ib)$ and $\oga_\ia=\oga_\ib$ (\ref{pre05} C)). Therefore, an $R$-module $M$ is large $\ia$-fair, weakly large $\ia$-fair, or weakly large $\ia$-quasifair, resp. if and only if it is large $\ib$-fair, weakly large $\ib$-fair, or weakly large $\ib$-quasifair, resp. In particular, $\ia$ is large fair, weakly large fair, or weakly large quasifair if and only if $\ib$ is so.\smallskip

B) Let $n\in\N^*$. Then, $\ga_\ia=\ga_{\ia^n}$ (\ref{pre05} C)). Therefore, an $R$-module $M$ is $\ia$-fair, weakly $\ia$-fair, or weakly $\ia$-quasifair, resp. if and only if it is $\ia^n$-fair, weakly $\ia^n$-fair, or weakly $\ia^n$-quasifair, resp. In particular, $\ia$ is fair, weakly fair, or weakly quasifair if and only if $\ia^n$ is so. Moreover, $\ia$ is half-centred, centred, or well-centred if and only if $\ia^n$ is so.
\end{no}

\begin{prop}\label{fac132}
Let $\ib\subseteq R$ be an ideal with $\ia\subseteq\ib\subseteq\sqrt{\ia}$.

a) A weakly $\ib$-quasifair $R$-module is weakly $\ia$-quasifair.

b) If $\ib$ is weakly quasifair, then so is $\ia$.

c) If $\ib$ is half-centred, centred, or well-centred, then so is $\ia$.
\end{prop}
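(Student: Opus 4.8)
The plan is to exploit two structural consequences of the hypothesis $\ia\subseteq\ib\subseteq\sqrt{\ia}$ recorded in \ref{fac131} A): namely $\var(\ia)=\var(\ib)$, and, since $\ia\subseteq\ib$, the fact that $\ga_\ib$ is a subfunctor of $\ga_\ia$ together with $\oga_\ia=\oga_\ib$ (\ref{pre05} C)). One cannot simply assert that $\ia$ and $\ib$ have the same small torsion functor, since $\ga_\ia$ and $\ga_\ib$ genuinely may differ; so every argument must route through the one-sided inclusion $\ga_\ib(M)\subseteq\ga_\ia(M)$.

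For a), the inclusion $\assf_R(\ga_\ia(M))\subseteq\assf_R(M)\cap\var(\ia)$ is automatic by \ref{fac10} b), so only the reverse inclusion requires work. Here I would chain $\assf_R(M)\cap\var(\ia)=\assf_R(M)\cap\var(\ib)=\assf_R(\ga_\ib(M))\subseteq\assf_R(\ga_\ia(M))$, where the first equality is $\var(\ia)=\var(\ib)$, the second is weak $\ib$-quasifairness of $M$, and the last inclusion is monotonicity of the weak assassin along $\ga_\ib(M)\subseteq\ga_\ia(M)$ (\ref{pre10} B)). Part b) is then immediate: if every $R$-module is weakly $\ib$-quasifair, then by a) every $R$-module is weakly $\ia$-quasifair, so $\ia$ is weakly quasifair.

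For c), I would treat half-centredness via the characterisation \ref{fac63} B): $\ib$ half-centred means $\ga_\ib=\oga_\ib$, and then the chain of subfunctors $\ga_\ib\subseteq\ga_\ia\subseteq\oga_\ia=\oga_\ib=\ga_\ib$ forces $\ga_\ia=\oga_\ia$, i.e.\ $\ia$ is half-centred. For centredness, I would first recall that $\assf_R(M)\cap\var(\ia)=\emptyset\Rightarrow\ga_\ia(M)=0$ always holds (\ref{fac60} a)), so it suffices to establish the converse for every $R$-module $M$; but $\ga_\ia(M)=0$ forces $\ga_\ib(M)=0$, whence centredness of $\ib$ yields $\assf_R(M)\cap\var(\ia)=\assf_R(M)\cap\var(\ib)=\emptyset$. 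Well-centredness follows by combining the two, being by definition the conjunction of centredness and half-centredness.

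There is no serious obstacle: the proposition is bookkeeping once the earlier results are in place. The only points requiring care are getting the functor inclusion $\ga_\ib\subseteq\ga_\ia$ the correct way round (it runs opposite to the inclusion of ideals), and remembering that ``centred'' and ``half-centred'' each come with one free implication, so the hypothesis on $\ib$ is needed only for the other.
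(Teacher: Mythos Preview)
Your proof is correct and follows essentially the same approach as the paper: for a) you use precisely the chain $\assf_R(M)\cap\var(\ia)=\assf_R(M)\cap\var(\ib)=\assf_R(\ga_\ib(M))\subseteq\assf_R(\ga_\ia(M))$ that the paper uses, and b) is immediate in both. For c) the paper argues even more tersely (just ``$\var(\ia)=\var(\ib)$ and $\ga_\ib$ is a subfunctor of $\ga_\ia$''), which amounts to using the definitions of centred and half-centred directly; your route for half-centredness via the characterisation $\ga_\ib=\oga_\ib$ from \ref{fac63} B) and the squeeze $\ga_\ib\subseteq\ga_\ia\subseteq\oga_\ia=\oga_\ib$ is a harmless and equally short variant.
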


\begin{proof}
a) For a weakly $\ib$-quasifair $R$-module $M$ we have \[\assf_R(M)\cap\var(\ia)=\assf_R(M)\cap\var(\ib)=\assf_R(\ga_{\ib}(M))\subseteq\assf_R(\ga_\ia(M)),\] and hence we get the claim. b) follows from a). c) holds since $\var(\ia)=\var(\ib)$ and $\ga_\ib$ is a subfunctor of $\ga_\ia$.
\end{proof}

\begin{no}\label{fac133}
The converses of \ref{fac132} a) and b) need not hold. More precisely, if an $R$-module $M$ is $\ia$-fair, weakly $\ia$-fair, or weakly $\ia$-quasifair, resp., then it need not be $\sqrt{\ia}$-fair, weakly $\sqrt{\ia}$-fair, or weakly $\sqrt{\ia}$-quasifair, resp. Indeed, since every $R$-module is $0$-fair and weakly $0$-fair, it suffices to exhibit $0$-dimensional local rings whose maximal ideals, necessarily equal to $\sqrt{0}$, are not fair or not weakly quasifair (\ref{fac130} a)). Such examples were constructed in the proofs of \cite[3.8, 3.9]{asstor}. (In \ref{nil60} B) we will see that the converses of \ref{fac132} c) hold neither.)
\end{no}

\begin{prop}\label{fac134}
a) If $\ia$ has a power of finite type, then it is well-centred and weakly quasifair.

b) Ideals in noetherian rings are well-centred, fair, and weakly fair.

c) Noetherian $R$-modules are $\ia$-fair, weakly $\ia$-fair, large $\ia$-fair, and weakly large $\ia$-fair.
\end{prop}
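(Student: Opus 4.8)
The plan is to handle the three parts in the order c), then b), then a), since b) follows from c) together with \ref{pre10} A), and a) requires a separate (but short) argument about bounded torsion.

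For part c), I would start from the standard fact that a noetherian $R$-module $M$ is of bounded small $\ia$-torsion: since $\ga_\ia(M)$ is a noetherian submodule, it is finitely generated, so some power $\ia^n$ annihilates it, whence (by \ref{pre06}) $\ga_\ia(M)=\oga_\ia(M)$. Thus it suffices to treat the small torsion case, and for that I would invoke \ref{fac10} A): the inclusions a)--d) there are already equalities on one side, so I need the reverse inclusions $\ass_R(M/\ga_\ia(M))\subseteq\ass_R(M)\setminus\var(\ia)$ and similarly for $\assf$. Here I would use \ref{pre12} with $N=\ga_\ia(M)$: since $M$ is noetherian, $N=\ga_\ia(M)$ is finitely generated and $\ia^n N=0$ for some $n$, so $\ia^n\subseteq(0:_R N)$, giving $\var(0:_R N)\subseteq\var(\ia)$; hence \ref{pre12} yields $\ass_R(M/N)\setminus\var(\ia)\subseteq\ass_R(M/N)\setminus\var(0:_RN)\subseteq\ass_R(M)$, and combined with $\assf_R(M/\ga_\ia(M))\subseteq\var(\ia)$-type information we get that every prime in $\ass_R(M/\ga_\ia(M))$ avoids $\var(\ia)$ (because $\ga_\ia(M/\ga_\ia(M))=0$ as $\ga_\ia$ is a radical on noetherian modules by \ref{pre05} E)). The same argument with $\assf$ in place of $\ass$ gives the weak version. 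Since $\ga_\ia(M)=\oga_\ia(M)$, the ``large'' statements follow immediately.

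For part b), an ideal $\ia$ in a noetherian ring acts on every $R$-module, but fairness is a statement about all $R$-modules, not just noetherian ones; so I cannot directly quote c). Instead I would use \ref{fac10} C) and \ref{fac70}: it suffices to check the fairness/centredness conditions on monogeneous modules $M=\langle x\rangle_R\cong R/(0:_Rx)$, and such a module, being a quotient of the noetherian ring $R$, is itself noetherian — so part c) applies to it. This reduces b) to c). For half-centredness I would note that over a noetherian ring $\ga_\ia=\oga_\ia$ (e.g. by \ref{pre05} E), or directly since every f.g. module has bounded torsion), so \ref{fac63} B) gives half-centredness, and then \ref{fac63} C) together with $\ga_\ia$ being a radical (\ref{pre05} E)) upgrades ``centred'' to ``well-centred''; centredness itself follows from weak quasifairness via \ref{fac130} a), and weak quasifairness is the $\assf$-statement in c) applied to monogeneous modules.

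For part a), suppose $\ia^n$ is of finite type for some $n\in\N^*$. By \ref{fac131} B), $\ia$ is half-centred / centred / well-centred / weakly quasifair iff $\ia^n$ is, so I may assume $\ia$ itself is finitely generated, say $\ia=(a_1,\dots,a_k)$. Then for any $R$-module $M$ and any $x\in\ga_\ia(M)$ there is $m$ with $\ia^m x=0$; taking $m$ uniformly on a finite generating set shows $\ga_\ia(\langle x\rangle_R)$ is killed by a fixed power of $\ia$ — more to the point, every $R$-module is of bounded small $\ia$-torsion once $\ia$ is finitely generated (for $x\in\ga_\ia(M)$ each $a_i$ is nilpotent on $x$, and finitely many suffice), hence $\ga_\ia=\oga_\ia$ and $\ia$ is half-centred by \ref{fac63} B). Since $\ga_\ia=\oga_\ia$ is then a radical (\ref{pre05} E) for $\oga$), \ref{fac63} C) gives well-centredness once centredness is known; and centredness plus weak quasifairness I would get from \ref{fac130} a) after establishing weak quasifairness directly. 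For weak quasifairness, given $\ip\in\assf_R(M)\cap\var(\ia)$, pick $x$ with $\ip\in\min(0:_Rx)$; since $\ia\subseteq\ip$ and $\ia$ is finitely generated, a suitable power $\ia^N$ lies in $(0:_Rx)$ after passing to the localisation, so $x$'s image in $M_\ip$ lies in the small torsion, and one transfers this back using \ref{pre11} B) and the bounded-torsion identification to conclude $\ip\in\assf_R(\ga_\ia(M))$; combined with \ref{fac10} A) b) this gives equality.

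The main obstacle I anticipate is part a): the passage from ``$\ia^n$ finitely generated'' to a clean proof of weak quasifairness for \emph{all} (not necessarily noetherian) modules requires care with localisation — one must commute $\assf$ with localisation (\ref{pre11} B)) and know that $\rho^\ip_\ia$ is an isomorphism in the relevant situation, which holds here precisely because bounded torsion makes $\ga_\ia$ and $\oga_\ia$ agree and commute with the localisation at $\ip$. Parts b) and c) are essentially bookkeeping on top of \ref{fac10}, \ref{pre12}, and \ref{pre06}.
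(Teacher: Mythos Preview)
Your plan is correct and, unlike the paper's proof, essentially self-contained. The paper disposes of all three parts by citation: c) follows from \cite[3.6]{asstor} and \cite[4.6 d)]{sol}; b) follows from \cite[3.6]{asstor} together with a); and a) reduces via \ref{fac131} B) to $\ia$ of finite type, then quotes \cite[4.4 c)]{sol} for half-centredness and \cite[5.4]{asstor} for weak quasifairness. You instead unpack these citations using \ref{pre06}, \ref{pre12}, \ref{fac60} a), \ref{pre11} B), and the reductions to monogeneous modules in \ref{fac10} C) and \ref{fac70}. The logical skeleton of a) is the same in both; your localisation argument for weak quasifairness is a reconstruction of \cite[5.4]{asstor}, and the obstacle you anticipate --- that one must know $\rho^\ip_\ia$ is an isomorphism for $\ia$ of finite type --- is exactly the right point and is routine to check, since $(0:_M\ia^n)$ commutes with localisation when $\ia^n$ is finitely generated.

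Two small corrections. First, your appeal to \ref{pre05} E) for ``$\ga_\ia$ is a radical on noetherian modules'' is misplaced: that item treats noetherian \emph{rings}, not noetherian modules over an arbitrary ring. The claim you need is nevertheless true and follows at once from bounded torsion (if $\ia^n\ga_\ia(M)=0$ and $\ia^mx\subseteq\ga_\ia(M)$, then $\ia^{m+n}x=0$), so argue directly rather than by citation. Second, in c) the phrase ``the same argument with $\assf$ in place of $\ass$'' conceals a gap: from $\ga_\ia(M/\ga_\ia(M))=0$ one obtains $\ass_R(M/\ga_\ia(M))\cap\var(\ia)=\emptyset$ via \ref{fac60} a), but the analogous vanishing for $\assf$ is precisely centredness of $\ia$, which is not yet available. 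The repair is to invoke \ref{pre10} A) at the outset: since $M/\ga_\ia(M)$ is noetherian one has $\assf=\ass$ on it (and on $M$), so weak $\ia$-fairness reduces literally to $\ia$-fairness and no separate argument is needed.
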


\begin{proof}
a) By \ref{fac131} B), we may suppose that $\ia$ is of finite type. Then, $\ia$ is half-centred (\cite[4.4 c)]{sol}), hence $\ga_\ia=\oga_\ia$ is a radical (\ref{fac63} B), \ref{pre05} E)), and $\ia$ is weakly quasifair (\cite[5.4]{asstor}), thus well-centred (\ref{fac130} a), \ref{fac63} B)). b) follows from \cite[3.6]{asstor} and a). c) follows from \cite[3.6]{asstor} and \cite[4.6 d)]{sol}.
\end{proof}

\begin{exas}\label{fac136}
A) The ideal $R$ is well-centred, fair, and weakly fair (\ref{pre05} B)).\smallskip

B) An $R$-module $M$ with $\oga_\ia(M)=M$ is large $\ia$-fair and weakly large $\ia$-fair; an $R$-module $M$ with $\ga_\ia(M)=M$ is $\ia$-fair, weakly $\ia$-fair, large $\ia$-fair, and weakly large $\ia$-fair (\ref{pre03}).\smallskip

C) Nil ideals are large fair and weakly large fair; nilpotent ideals are well-centred, fair, and weakly fair (B), \ref{pre05} B), \ref{fac134} a)).\smallskip

D) If $\ib\subseteq R$ is an ideal with $\ia\subseteq\ib$, then the $R$-module $R/\ib$ is $\ia$-fair, weakly $\ia$-fair, large $\ia$-fair, and weakly large $\ia$-fair (B), \ref{pre05} F)).\smallskip

E) If $\ip\in\spec(R)$, the $R$-module $R/\ip$ is $\ia$-fair, weakly $\ia$-fair, large $\ia$-fair, and weakly large $\ia$-fair. Indeed, for $\ip\in\var(\ia)$ this holds by D), and for $\ip\notin\var(\ia)$ it follows from \ref{pre10} C) and \ref{pre05} F). In particular, if $R$ is integral, then the $R$-module $R$ is $\ia$-fair, weakly $\ia$-fair, large $\ia$-fair, and weakly large $\ia$-fair.
\end{exas}

\begin{no}\label{fac137}
If we wish to check whether $\ia$ has some fairness or centredness property, then by \ref{fac10} C), \ref{fac40} and \ref{fac70}, it suffices to consider $R$-modules of the form $R/\ib$ for ideals $\ib\subseteq R$. By \ref{fac136} D) and \ref{pre05} F), we may additionally suppose that $\ia\not\subseteq\ib$.
\end{no}

\begin{no}\label{fac161}
In \cite[5.11 $(*)$]{asstor} we asked whether every ideal $\ia$ such that $\ga_\ia$ is a radical is weakly quasifair, or even weakly fair. The answer to this is negative. Indeed, by \cite[5.5 B)]{sol} and \ref{fac63} B), there exist a ring $R$ and an ideal $\ia\subseteq R$ that is not half-centred such that $\ga_\ia$ is a radical. Then, $\ia$ is not well-centred, hence not weakly quasifair, and thus not weakly fair (\ref{fac130} a), \ref{fac63} B)).
\end{no}

\begin{prop}\label{fac162}
a) Suppose that $\ga_\ia(R)=\ia$. If $R\neq 0$, the $R$-module $R$ is not weakly $\ia$-fair. If $\ia$ is prime, the $R$-module $R$ is neither $\ia$-fair nor weakly $\ia$-fair.

b) Suppose that $\oga_\ia(R)=\ia$. If $R\neq 0$, the $R$-module $R$ is not weakly large $\ia$-fair. If $\ia$ is prime, the $R$-module $R$ is neither large $\ia$-fair nor weakly large $\ia$-fair.
\end{prop}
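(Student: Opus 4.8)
The plan is to treat both parts by the same elementary comparison. Under the hypothesis of a) we have $R/\ga_\ia(R)=R/\ia$, and under the hypothesis of b) we have $R/\oga_\ia(R)=R/\ia$; in either case \ref{pre10} C) forces the (weak) assassin of this quotient to be a subset of $\var(\ia)$, whereas the sets $\assf_R(R)\setminus\var(\ia)$ and $\ass_R(R)\setminus\var(\ia)$ occurring on the right-hand side of the fairness conditions are by construction disjoint from $\var(\ia)$. Hence the fairness equalities can hold only when the relevant assassin of $R/\ia$ is empty, and the whole point of the proof is to exclude this.

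For a): first I would dispose of the case $\ia=R$. If $\ia=R$, then $\ia=\ga_\ia(R)=\ga_R(R)=0$ by \ref{pre05} B), so $R=0$. Thus when $R\neq 0$ we have $\ia\neq R$, hence $R/\ia\neq 0$ and $\assf_R(R/\ia)\neq\emptyset$ by \ref{pre10} A). Since $\assf_R(R/\ga_\ia(R))=\assf_R(R/\ia)\subseteq\var(\ia)$ by \ref{pre10} C), while $(\assf_R(R)\setminus\var(\ia))\cap\var(\ia)=\emptyset$, a nonempty subset of $\var(\ia)$ cannot equal $\assf_R(R)\setminus\var(\ia)$; so $R$ is not weakly $\ia$-fair. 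If moreover $\ia$ is prime, then $R\neq 0$, so $R$ is not weakly $\ia$-fair by the previous sentence; and by \ref{pre10} C) we have $\ass_R(R/\ga_\ia(R))=\ass_R(R/\ia)=\{\ia\}$, which cannot equal $\ass_R(R)\setminus\var(\ia)$ because $\ia\in\var(\ia)$. Hence $R$ is not $\ia$-fair either.

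For b): the argument is the verbatim translation of a), replacing $\ga_\ia$ by $\oga_\ia$ throughout. The case $\ia=R$ is handled in the same way since $\oga_R=0$ by \ref{pre05} B); then for $R\neq 0$ one uses $\assf_R(R/\oga_\ia(R))=\assf_R(R/\ia)\subseteq\var(\ia)$ together with \ref{pre10} A) exactly as before, and for $\ia$ prime one uses $\ass_R(R/\oga_\ia(R))=\ass_R(R/\ia)=\{\ia\}$ with $\ia\in\var(\ia)$. No new idea enters.

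The only subtlety — and the reason the hypothesis $R\neq 0$ is needed — is the degenerate case $R/\ia=0$, in which the assassin of the quotient is empty and the ``disjointness versus containment'' contradiction collapses. This is precisely the case $\ia=R$, which is excluded once $R\neq 0$ via the identities $\ga_R=\oga_R=0$ from \ref{pre05} B); everything else is a one-line set comparison, so I expect no genuine obstacle.
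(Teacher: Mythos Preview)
Your proof is correct and follows essentially the same route as the paper's: both reduce to the observation that $\assf_R(R/F(R))=\assf_R(R/\ia)\subseteq\var(\ia)$ via \ref{pre10} C), so fairness forces this set to be empty, which is impossible when $R/\ia\neq 0$ (equivalently $\ia\neq R$, equivalently $R\neq 0$ via \ref{pre05} B)). The paper merely runs the contrapositive in one breath (assume fair, deduce $R=0$ or $\ia$ not prime), whereas you dispose of $\ia=R$ first; the content is identical.
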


\begin{proof}
Let $F$ denote $\ga_\ia$ or $\oga_\ia$. As $F(R)=\ia$, we have \[\ass_R(R/F(R))\subseteq\assf_R(R/F(R))=\assf_R(R/\ia)\subseteq\var(\ia)\] (\ref{pre10} A), C)). So, if the $R$-module $R$ is weakly (large) $\ia$-fair, then $\assf_R(R/\ia)=\emptyset$, hence $R/\ia=0$, thus $\ia=R$ and therefore $R=0$. Moreover, if the $R$-module $R$ is (large) $\ia$-fair, then $\ass_R(R/\ia)=\emptyset$, and thus $\ia$ is not prime (\ref{pre10} C)).
\end{proof}

\begin{prop}\label{fac163}
Let $M$ be an $R$-module.

a) $\ass_R(M/\oga_{\ia}(M))\cap\var(\ia)=\emptyset$.

b) If $\ga_\ia$ is a radical, then $\ass_R(M/\ga_{\ia}(M))\cap\var(\ia)=\emptyset$.

c) If $\ia$ is well-centred, then $\assf_R(M/\ga_{\ia}(M))\cap\var(\ia)=\emptyset$.\footnote{By \ref{fac134} a) this generalises \cite[5.3]{asstor}.}
\end{prop}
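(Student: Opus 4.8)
The plan is to establish all three parts from a single common observation: if $F$ denotes either $\ga_\ia$ or $\oga_\ia$, and $F$ happens to be a radical (which is automatic for $\oga_\ia$ by \ref{pre05} E), and holds for $\ga_\ia$ by hypothesis in b)), then for any $R$-module $M$ the quotient $M/F(M)$ satisfies $F(M/F(M))=0$. I would then argue that a module $N$ with $\ga_\ia(N)=0$ (resp. $\oga_\ia(N)=0$) has no associated prime in $\var(\ia)$. For $\oga_\ia$ this is exactly the last implication of \ref{fac60} a) applied to $N=M/\oga_\ia(M)$: from $\oga_\ia(N)=0$ we get $\ga_\ia(N)=0$ and hence $\ass_R(N)\cap\var(\ia)=\emptyset$, giving a). For b), the same chain works once we know $\ga_\ia(M/\ga_\ia(M))=0$, which is precisely the radical hypothesis; then $\ga_\ia(N)=0$ forces $\ass_R(N)\cap\var(\ia)=\emptyset$ by the third implication of \ref{fac60} a), and b) follows.

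For part c), the point is to upgrade from $\ass$ to $\assf$. Here I would use that well-centredness means $\ia$ is centred and $\ga_\ia$ is a radical (\ref{fac63} C)). Being a radical gives $\ga_\ia(M/\ga_\ia(M))=0$ as before; being centred then says (by definition, \ref{fac63} A)) that any module $N$ with $\ga_\ia(N)=0$ has $\assf_R(N)\cap\var(\ia)=\emptyset$. Applying this to $N=M/\ga_\ia(M)$ gives exactly the desired statement $\assf_R(M/\ga_\ia(M))\cap\var(\ia)=\emptyset$.

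I do not anticipate a genuine obstacle here; the proposition is essentially a repackaging of the definitions of ``radical'' and ``centred'' together with \ref{fac60} a). The only thing to be careful about is invoking the correct implication from \ref{fac60} a) in each case: for a) and c) one needs that $\oga_\ia(N)=0$ (resp. $\ga_\ia(N)=0$) passes to the conclusion about $\ass_R$ or $\assf_R$, and for b) one must not forget to first reduce to a vanishing-torsion module via the radical hypothesis. A clean way to present it is to handle $F\in\{\ga_\ia,\oga_\ia\}$ uniformly wherever possible and split only at the final step where $\ass$ versus $\assf$ and the centredness hypothesis come in.

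\begin{proof}
a) Since $\oga_\ia$ is a radical (\ref{pre05} E)), the module $N\dfgl M/\oga_\ia(M)$ satisfies $\oga_\ia(N)=0$, hence $\ga_\ia(N)=0$, and therefore $\ass_R(N)\cap\var(\ia)=\emptyset$ (\ref{fac60} a)).

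b) By hypothesis $N\dfgl M/\ga_\ia(M)$ satisfies $\ga_\ia(N)=0$, hence $\ass_R(N)\cap\var(\ia)=\emptyset$ (\ref{fac60} a)).

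c) By \ref{fac63} C), $\ga_\ia$ is a radical and $\ia$ is centred. Thus $N\dfgl M/\ga_\ia(M)$ satisfies $\ga_\ia(N)=0$, and centredness yields $\assf_R(N)\cap\var(\ia)=\emptyset$ (\ref{fac63} A)).
\end{proof}
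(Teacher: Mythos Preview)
Your proof is correct and follows essentially the same approach as the paper: use the radical property to obtain vanishing torsion on the quotient, then read off the conclusion about (weak) assassins. The only cosmetic differences are that for a) the paper invokes \ref{fac20} a) directly rather than passing through \ref{fac60} a), and for b) the paper simply cites \cite[4.1]{asstor} instead of giving the one-line argument you wrote out.
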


\begin{proof}
a) Applying \ref{pre05} E) and \ref{fac20} a) to the $R$-module $M/\oga_\ia(M)$ yields \[\emptyset=\ass_R(0)=\ass_R(\oga_\ia(M/\oga_\ia(M)))=\ass_R(M/\oga_\ia(M))\cap\var(\ia)\] (\ref{pre05} E)) and thus the claim. b) holds by \cite[4.1]{asstor}. c) As $\ga_\ia$ is a radical (\ref{fac63} C)), we have $\ga_\ia(M/\ga_\ia(M))=0$, so centredness implies $\assf_R(M/\ga_{\ia}(M))\cap\var(\ia)=\emptyset$.
\end{proof}

\begin{prop}\label{fac165}
Let $\ib\subseteq R$ be an ideal.

a) If $\ia$ and $\ib$ are half-centred, then so is $\ia+\ib$.

b) If $\ia$ is centred and $\ib$ is well-centred or weakly fair, then $\ia+\ib$ is centred.

c) If $\ia$ and $\ib$ are well-centred, then so is $\ia+\ib$.
\end{prop}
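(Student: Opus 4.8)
The plan is to reduce everything to the relation $\ga_{\ia+\ib}=\ga_\ia\circ\ga_\ib$ from \ref{pre05} D), which lets one peel off the ideals one at a time.

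For a), recall that half-centredness of an ideal means precisely that its small and large torsion functors agree (\ref{fac63} B)). So if both $\ia$ and $\ib$ are half-centred, then $\ga_{\ia+\ib}=\ga_\ia\circ\ga_\ib=\oga_\ia\circ\oga_\ib=\oga_{\ia+\ib}$ by \ref{pre05} D), and hence $\ia+\ib$ is half-centred again by \ref{fac63} B). This is immediate.

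For b), let $M$ be an $R$-module with $\ga_{\ia+\ib}(M)=0$; I must show $\assf_R(M)\cap\var(\ia+\ib)=\emptyset$ (the other implication of the definition of centredness holds for every ideal by \ref{fac60} a)). I would set $N\dfgl\ga_\ib(M)$, so that $\ga_\ia(N)=\ga_{\ia+\ib}(M)=0$ by \ref{pre05} D); since $\ia$ is centred, this gives $\assf_R(N)\cap\var(\ia)=\emptyset$. On the quotient side I claim $\assf_R(M/N)\cap\var(\ib)=\emptyset$: if $\ib$ is well-centred this is exactly \ref{fac163} c), and if $\ib$ is weakly fair then $\assf_R(M/N)=\assf_R(M/\ga_\ib(M))=\assf_R(M)\setminus\var(\ib)$ is disjoint from $\var(\ib)$ by definition. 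Finally, using $\var(\ia+\ib)=\var(\ia)\cap\var(\ib)$ together with the inclusion $\assf_R(M)\subseteq\assf_R(N)\cup\assf_R(M/N)$ coming from the exact sequence $0\to N\to M\to M/N\to 0$ (\ref{pre10} B)), one gets
\[\assf_R(M)\cap\var(\ia+\ib)\subseteq\bigl(\assf_R(N)\cap\var(\ia)\bigr)\cup\bigl(\assf_R(M/N)\cap\var(\ib)\bigr)=\emptyset,\]
as desired. (One could instead restrict to monogeneous $M$ via \ref{fac70} a), but the argument works verbatim for arbitrary $M$.) Part c) then follows by combining a) and b): if $\ia$ and $\ib$ are well-centred they are in particular half-centred, so $\ia+\ib$ is half-centred by a); and $\ia$ is centred while $\ib$ is well-centred, so $\ia+\ib$ is centred by b); hence $\ia+\ib$ is well-centred.

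The only delicate point is controlling $\assf_R(M/\ga_\ib(M))$ on the quotient, which is precisely why a hypothesis on $\ib$ stronger than plain centredness is needed in b) and c); the remainder is bookkeeping with the exact-sequence estimate of \ref{pre10} B) and the identity $\var(\ia+\ib)=\var(\ia)\cap\var(\ib)$.
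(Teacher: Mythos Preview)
Your proof is correct and follows essentially the same route as the paper: part a) via $\ga_{\ia+\ib}=\ga_\ia\circ\ga_\ib$ and \ref{fac63} B), part b) by applying centredness of $\ia$ to $\ga_\ib(M)$ and controlling $\assf_R(M/\ga_\ib(M))$ via the hypothesis on $\ib$, then combining through the exact-sequence estimate, and part c) from a) and b). In fact, your treatment of the weakly fair case in b) is slightly more explicit than the paper's, which cites \ref{fac163} c) for both alternatives even though that result literally covers only the well-centred case.
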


\begin{proof}
a) follows from \ref{pre05} D) and \ref{fac63} B). b) Suppose that $\ia$ is centred and $\ib$ is well-centred or weakly fair. Let $M$ be an $R$-module with $\ga_{\ia+\ib}(M)=0$. Then, $\ga_\ia(\ga_\ib(M))=0$ (\ref{pre05} D)). Centredness of $\ia$ yields $\assf_R(\ga_\ib(M))\cap\var(\ia)=\emptyset$. The hypothesis on $\ib$ implies that $\assf_R(M/\ga_\ib(M))\cap\var(\ib)=\emptyset$ (\ref{fac163} c)). It follows that \[\assf_R(M)\cap\var(\ia+\ib)\subseteq(\assf_R(\ga_\ib(M))\cup\assf_R(M/\ga_\ib(M)))\cap\var(\ia)\cap\var(\ib)\subseteq\]\[(\assf_R(\ga_\ib(M))\cap\var(\ia))\cup(\assf_R(M/\ga_\ib(M))\cap\var(\ib))=\emptyset\] (\ref{pre10} B)). Thus, $\ia+\ib$ is centred. c) follows from a) and b).
\end{proof}

\begin{prop}\label{fac170}
Let $M$ be an $R$-module. If $\oga_\ia(M)=0$, then $M$ is large $\ia$-fair. If $\ga_\ia(M)=0$, then $M$ is $\ia$-fair.
\end{prop}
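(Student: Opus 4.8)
The plan is to observe that both assertions are immediate from the part a) equalities in \ref{fac20} and \ref{fac10} A), together with the triviality that a zero submodule makes the quotient equal to the whole module. I would handle the two cases in parallel, since they are structurally identical.

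First I would treat the large torsion case. Assume $\oga_\ia(M)=0$. Then the canonical map $M\to M/\oga_\ia(M)$ is an isomorphism, so being large $\ia$-fair amounts to proving $\ass_R(M)=\ass_R(M)\setminus\var(\ia)$, that is, $\ass_R(M)\cap\var(\ia)=\emptyset$. By \ref{fac20} a) we have $\ass_R(M)\cap\var(\ia)=\ass_R(\oga_\ia(M))=\ass_R(0)=\emptyset$, which is exactly what is needed. Then I would run the same argument for the small torsion case: if $\ga_\ia(M)=0$ then $M/\ga_\ia(M)\cong M$, and by \ref{fac10} A) a) we get $\ass_R(M)\cap\var(\ia)=\ass_R(\ga_\ia(M))=\ass_R(0)=\emptyset$, so $\ass_R(M/\ga_\ia(M))=\ass_R(M)=\ass_R(M)\setminus\var(\ia)$ and $M$ is $\ia$-fair.

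I do not expect any genuine obstacle here; the only thing to be careful about is that "fairness" is a statement about the \emph{assassin} (not the weak assassin) of the torsion quotient, which is precisely why the exact identities $\ass_R(\ga_\ia(M))=\ass_R(M)\cap\var(\ia)$ and $\ass_R(\oga_\ia(M))=\ass_R(M)\cap\var(\ia)$ (rather than the merely one-sided inclusions available for $\assf_R$) are what make the argument go through. No reduction to monogeneous modules or further machinery is required.
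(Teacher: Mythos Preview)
Your argument is correct and is essentially the same as the paper's proof, which simply says the statement follows immediately from \ref{fac20} a) (that result already contains both equalities $\ass_R(\oga_\ia(M))=\ass_R(M)\cap\var(\ia)=\ass_R(\ga_\ia(M))$, so your separate citation of \ref{fac10} A) a) for the small case is equivalent). Your more detailed write-up and the remark about why the assassin identity (rather than the weak one) is needed are fine elaborations of the same idea.
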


\begin{proof}
This follows immediately from \ref{fac20} a).
\end{proof}

\begin{prop}\label{fac172}
Let\/ $\dim(R)=0$, and let $\ia\subsetneqq R$ be a proper ideal. If $\ga_\ia(R)=0$, then the $R$-module $R$ is not weakly $\ia$-fair.
\end{prop}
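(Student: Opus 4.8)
Let $\dim(R)=0$ and $\ia\subsetneqq R$ a proper ideal with $\ga_\ia(R)=0$. The plan is to show directly that $\ass_R(R/\ga_\ia(R))\neq\ass_R(R)\setminus\var(\ia)$ by exhibiting a prime in the left-hand side that lies in $\var(\ia)$. Since $\ga_\ia(R)=0$, the quotient $R/\ga_\ia(R)$ is just $R$, so the claim to refute is $\ass_R(R)=\ass_R(R)\setminus\var(\ia)$, i.e.\ $\ass_R(R)\cap\var(\ia)=\emptyset$. So I must produce an associated prime of $R$ (equivalently, by \ref{pre10} A), a weakly associated prime of $R$) containing $\ia$.

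First I would use that $\ia$ is proper to pick a maximal ideal $\im\in\var(\ia)$; since $\dim(R)=0$, $\im$ is both maximal and minimal, so $\im\in\Min(R)$. Now I would invoke the hypothesis $\ga_\ia(R)=0$ together with \ref{fac60} a), or more directly argue as follows: $\ga_\ia(R)=0$ forces $\im\neq\ga_\im$-nilpotent behaviour; in fact, since $\ga_\ia\supseteq\ga_\im$ as functors when $\ia\subseteq\im$ (wait — inclusion of ideals reverses, so $\ga_\im\subseteq\ga_\ia$, hence $\ga_\im(R)\subseteq\ga_\ia(R)=0$, giving $\ga_\im(R)=0$). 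The key point is then that $\im\in\Min(R)$, so $\im$ is a minimal prime of $(0:_R 1)=(0)$, whence $\im\in\assf_R(R)$.

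The main obstacle is bridging from $\assf_R(R)$ back to $\ass_R(R)$, since in a non-noetherian ring these can differ and the statement is phrased for the \emph{small} weakly-$\ia$-fair / $\ia$-fair notion (the proposition says ``not weakly $\ia$-fair'', which concerns $\assf_R$). So actually it suffices to work with $\assf_R$ throughout: I have shown $\im\in\assf_R(R)\cap\var(\ia)$, hence $\assf_R(R)\cap\var(\ia)\neq\emptyset$, so $\assf_R(R/\ga_\ia(R))=\assf_R(R)\neq\assf_R(R)\setminus\var(\ia)$, which says precisely that $R$ is not weakly $\ia$-fair. The one thing to check carefully is that $\im\neq\emptyset$-type degeneracies are excluded: $\ia\subsetneqq R$ guarantees $\var(\ia)\neq\emptyset$, and $R/\ia\neq 0$ guarantees (via \ref{pre10} A) applied to $R/\ia$, or just by $\var(\ia)\subseteq\spec(R)$) that a maximal ideal containing $\ia$ exists. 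I would also double-check the edge case $R=0$ is vacuously fine since then $\ia=R$ contradicts properness.

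Alternatively — and perhaps cleaner — one could deduce this from \ref{fac60} a) contrapositively combined with \ref{fac130}: if $R$ were weakly $\ia$-fair it would be weakly $\ia$-quasifair (\ref{fac120}), but I will instead just cite \ref{fac162} a) once I observe $\ga_\ia(R)=0=\ia$ is \emph{not} the relevant hypothesis there. So the self-contained route via ``$\im\in\Min(R)\cap\var(\ia)\subseteq\assf_R(R)$'' is the one I would write up, as it avoids \ref{fac162} entirely and uses only $\dim(R)=0$ to identify a maximal ideal over $\ia$ as a minimal prime of the zero ideal.
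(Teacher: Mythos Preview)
Your proposal is correct and takes essentially the same approach as the paper: both use that in a $0$-dimensional ring every prime is minimal over $(0)=(0:_R1)$, hence lies in $\assf_R(R)$, and combine this with $\ga_\ia(R)=0$ and $\var(\ia)\neq\emptyset$ to conclude $\assf_R(R/\ga_\ia(R))=\assf_R(R)\supsetneqq\assf_R(R)\setminus\var(\ia)$. The paper streamlines this by noting $\assf_R(R)=\spec(R)$ outright rather than selecting a single $\im\in\var(\ia)$; your initial detour through $\ass_R$ and the aside on $\ga_\im(R)=0$ are unnecessary and should be dropped in a clean write-up.
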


\begin{proof}
As $\dim(R)=0$, every prime ideal is minimal over $0=(0:_R1)$, implying $\assf_R(R)=\spec(R)$. It follows \[\assf_R(R)\setminus\var(\ia)\subsetneqq\spec(R)=\assf_R(R)=\assf_R(R/\ga_\ia(R)),\] and thus the claim holds.
\end{proof}

\begin{prop}\label{fac180}
If $\sqrt{\ia}\in\Max(R)$, then: \[\xymatrix{\ia\text{ weakly fair}\ar@{=>}[r]&\ia\text{ half-centred}\ar@{=>}[r]&\ia\text{ weakly quasifair}\ar@{<=>}[r]&\ia\text{ centred.}}\]
\end{prop}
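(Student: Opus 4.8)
The plan is to establish the three links of the displayed chain separately. Throughout I would write $\im\dfgl\sqrt\ia$, so that $\im\in\Max(R)$ by hypothesis and hence $\var(\ia)=\{\im\}$; the whole force of the maximality assumption is that now each of $\ass_R(M)\cap\var(\ia)$, $\assf_R(M)\cap\var(\ia)$, $\assf_R(\ga_\ia(M))$, $\assf_R(\oga_\ia(M))$ is, by \ref{fac10} A)~b) and \ref{fac20} b), either empty or equal to the single set $\{\im\}$. Note also $\oga_\ia=\oga_\im$ by \ref{pre05} C).

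I would begin with the equivalence ``$\ia$ weakly quasifair $\Leftrightarrow$ $\ia$ centred''. The implication ``$\Rightarrow$'' is already contained in \ref{fac130} a). For ``$\Leftarrow$'', assume $\ia$ is centred; by \ref{fac10} C) it is enough to check that every monogeneous $R$-module $M$ is weakly $\ia$-quasifair. Since $\assf_R(\ga_\ia(M))\subseteq\assf_R(M)\cap\var(\ia)\subseteq\{\im\}$ (\ref{fac10} A)~b)), the only thing to rule out is that $\assf_R(\ga_\ia(M))=\emptyset$ while $\im\in\assf_R(M)$. But if $\im\in\assf_R(M)$ then $\assf_R(M)\cap\var(\ia)=\{\im\}\neq\emptyset$, so centredness forces $\ga_\ia(M)\neq0$, whence $\assf_R(\ga_\ia(M))\neq\emptyset$ by \ref{pre10} A); being contained in $\{\im\}$ it is then equal to $\{\im\}=\assf_R(M)\cap\var(\ia)$, as wanted. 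Here the maximality is used essentially: it is what makes the mere non-vanishing ``$\ga_\ia(M)\neq0$'' sufficient.

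Next, ``$\ia$ weakly fair $\Rightarrow$ $\ia$ half-centred'', which in fact does not even need maximality: if $M$ is an $R$-module with $\assf_R(M)\subseteq\var(\ia)$, then weak fairness gives $\assf_R(M/\ga_\ia(M))=\assf_R(M)\setminus\var(\ia)=\emptyset$, so $M/\ga_\ia(M)=0$ by \ref{pre10} A), i.e.\ $\ga_\ia(M)=M$; by \ref{fac70} b) and \ref{fac63} B) this says $\ia$ is half-centred. Finally, ``$\ia$ half-centred $\Rightarrow$ $\ia$ weakly quasifair''. By \ref{fac63} B) half-centredness means $\ga_\ia=\oga_\ia$, and then \ref{fac130} b) tells us that weak quasifairness, centredness and weak large quasifairness all coincide; in view of the first paragraph it therefore suffices to show that $\ia$ is centred, equivalently (by \ref{fac100} and \ref{fac60} a)) that $\im\in\assf_R(M)\Rightarrow\oga_\ia(M)\neq0$ for every $M$. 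By \ref{fac40} and \ref{fac137} this may be checked on $M=R/\ib$ with $\ia\not\subseteq\ib$; there $\im\in\assf_R(R/\ib)$ means $\im\in\min(\ib)$, and since $\im$ is also maximal, $\im/\ib$ is a minimal \emph{and} maximal prime of $R/\ib$, so $(R/\ib)_\im$ is a local ring whose maximal ideal equals its nilradical. One then has to upgrade this local fact to the global statement $\oga_\ia(R/\ib)=\oga_\im(R/\ib)\neq0$, i.e.\ to produce $r\in R\setminus\ib$ with $\sqrt{(\ib:_Rr)}=\im$, and this is exactly the place where the identity $\ga_\ia=\oga_\ia$ must be fed in, via \ref{pre11} A), \ref{pre05} F) and, if needed, \ref{pre12}.

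I expect this last step to be the main obstacle. The first two links are essentially bookkeeping with the single closed point $\{\im\}=\var(\ia)$, but passing from ``$\im$ is weakly associated to $R/\ib$'' to ``$\oga_\ia(R/\ib)\neq0$'' genuinely requires exploiting $\ga_\ia=\oga_\ia$ rather than just maximality: over a general ring a minimal-and-maximal prime of $R/\ib$ need not be cut out by an idempotent, so the global large torsion submodule cannot simply be read off from its localisation at $\im$, and some care with \ref{pre11} A) (which is only a monomorphism, in the unhelpful direction) is needed.
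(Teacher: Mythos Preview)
Your arguments for ``weakly quasifair $\Leftrightarrow$ centred'' and ``weakly fair $\Rightarrow$ half-centred'' are correct and match the paper's proof essentially verbatim.

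For ``half-centred $\Rightarrow$ weakly quasifair'' your proposal has a genuine gap, which you yourself flag: after reducing (via \ref{fac63}~B), \ref{fac130}~b), \ref{fac100}) to showing that $\im\in\assf_R(R/\ib)$ forces $\oga_\im(R/\ib)\neq 0$, you leave the decisive step open, and the tools you list (\ref{pre11}~A), \ref{pre05}~F), \ref{pre12}) do not close it, precisely because $\overline{\rho}^{\,\im}_\ia$ points the wrong way. The paper's route here is entirely different and avoids localisation altogether: from $\assf_R(M)\cap\var(\ia)\neq\emptyset$ it asserts $\assf_R(M)=\{\sqrt\ia\}$, and then half-centredness gives $\ga_\ia(M)=M$ outright. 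You should be aware, however, that this assertion is false as stated (take $R=\Z$, $\ia=2\Z$, $M=\Z/6\Z$, where $\assf_R(M)=\{2\Z,3\Z\}$), so the paper's own one-line argument for this implication also has a gap. The natural repair of the paper's idea---replace $M$ by the subquotient $N=\langle x\rangle/\ker(\langle x\rangle\to\langle x\rangle_\im)$, where $x$ witnesses $\im\in\assf_R(M)$; then $\oga_\im(N)=N$, so genuinely $\assf_R(N)=\{\im\}$ by \ref{fac60}~b), and half-centredness yields $\ga_\ia(N)=N$---runs into exactly the obstacle you met from the other side: one must still pass from $\ga_\ia$ of a subquotient of $M$ back to $\ga_\ia(M)\neq 0$, and neither left-exactness of $\ga_\ia$ nor any of the cited lemmas does that.
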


\begin{proof}
The hypothesis implies that $\var(\ia)=\{\sqrt{\ia}\}$. First, let $\ia$ be weakly fair. Let $M$ be an $R$-module with $\assf_R(M)\subseteq\{\sqrt{\ia}\}$. If this inclusion is proper, $M=0$ (\ref{pre10} A)), and otherwise, \[\assf_R(M/\ga_\ia(M))=\assf_R(M)\setminus\{\sqrt{\ia}\}=\emptyset.\] It follows that $\ga_\ia(M)=M$, hence $\ia$ is half-centred. Next, let $\ia$ be half-centred. If $M$ is an $R$-module with $\assf_R(M)\cap\var(\ia)\neq\emptyset$, then $\assf_R(M)=\{\sqrt{\ia}\}$, hence half-centredness implies $\ga_\ia(M)=M$, and so $M$ is weakly $\ia$-quasifair. Therefore, $\ia$ is weakly quasifair. Finally, let $\ia$ be centred. Let $M$ be an $R$-module. We have \[\emptyset\subseteq\assf_R(\ga_\ia(M))\subseteq\assf_R(M)\cap\var(\ia)\subseteq\{\sqrt{\ia}\}\] (\ref{fac10} A) b)). If the first inclusion is proper, then the second one is an equality. If the first inclusion is an equality, then so is the second one by centredness and \ref{pre10} A). Thus, $M$ is weakly $\ia$-quasifair. As weakly quasifair ideals are centred (\ref{fac130} a)), the claim is proven.
\end{proof}

\begin{cor}\label{fac185}
If $\sqrt{\ia}\in\Max(R)$ and $\ga_\ia$ is a radical, then: \[\xymatrix{\ia\text{ half-centred}\ar@{<=>}[r]&\ia\text{ weakly quasifair}\ar@{<=>}[r]&\ia\text{ centred.}}\]
\end{cor}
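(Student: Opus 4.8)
The plan is to bootstrap off Proposition \ref{fac180}, which already does almost all the work once $\sqrt{\ia}\in\Max(R)$ is assumed: it hands us the implication $\ia$ half-centred $\Rightarrow$ $\ia$ weakly quasifair and the equivalence $\ia$ weakly quasifair $\Leftrightarrow$ $\ia$ centred. Thus the three properties in the statement already form the chain half-centred $\Rightarrow$ weakly quasifair $\Leftrightarrow$ centred, and the only thing left to establish is the implication closing the cycle, namely that under the extra hypothesis ``$\ga_\ia$ is a radical'' a centred ideal is half-centred.

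That last step is immediate from \ref{fac63}: by part C), an ideal is well-centred if and only if it is centred and $\ga_\ia$ is a radical, so our two hypotheses force $\ia$ to be well-centred; and well-centred ideals are half-centred by the definition in part A). Chaining this with Proposition \ref{fac180} yields half-centred $\Leftrightarrow$ weakly quasifair $\Leftrightarrow$ centred, which is the assertion.

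There is essentially no obstacle here: the corollary is a formal consequence of \ref{fac180} and \ref{fac63} C), and in particular no argument at the level of individual modules is needed beyond what is already packaged in those two results. The only point worth noting is that the hypothesis ``$\ga_\ia$ is a radical'' is used exactly once, in the passage centred $\Rightarrow$ well-centred, whereas the hypothesis $\sqrt{\ia}\in\Max(R)$ supplies everything else via Proposition \ref{fac180}.
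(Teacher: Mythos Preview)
Your proposal is correct and follows essentially the same route as the paper: the paper's proof is the single sentence ``This follows immediately from \ref{fac180} and \ref{fac63} B),'' and your argument unpacks exactly that, the only cosmetic difference being that you cite \ref{fac63} C) (which is itself derived from \ref{fac63} B)) to pass from ``centred and $\ga_\ia$ a radical'' to ``half-centred.''
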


\begin{proof}
This follows immediately from \ref{fac180} and \ref{fac63} B).
\end{proof}

\begin{cor}\label{fac190}
If the maximal ideal of a $0$-dimensional local ring is weakly quasifair or centred, then every ideal is weakly quasifair and centred.
\end{cor}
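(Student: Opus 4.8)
The plan is to deduce this from Propositions~\ref{fac180} and~\ref{fac132}, exploiting that a $0$-dimensional local ring $(R,\im)$ has $\spec(R)=\{\im\}$ and $\sqrt{0}=\im$. The first step is to notice that the ``or'' in the hypothesis is harmless: since $\sqrt{\im}=\im\in\Max(R)$, Proposition~\ref{fac180} says that for the ideal $\im$ the notions ``weakly quasifair'' and ``centred'' coincide, so whichever of the two we are handed, $\im$ is in fact both weakly quasifair and centred. This is the only place where the structure of a $0$-dimensional local ring is used essentially, namely that $\im$ is maximal and equal to its own radical.

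The second step is to pass from $\im$ to an arbitrary ideal $\ia\subseteq R$. If $\ia=R$, then $\ga_\ia=0$ and $\var(\ia)=\emptyset$, so $\ia$ is weakly quasifair and centred for trivial reasons (alternatively one may quote \ref{fac136}~A) and \ref{fac130}~a)). If $\ia\subsetneqq R$, then $\ia$ is contained in the unique maximal ideal $\im$, and since $\spec(R)=\{\im\}$ we have $\var(\ia)=\{\im\}$, whence $\sqrt{\ia}=\im$; thus $\ia\subseteq\im\subseteq\sqrt{\ia}$. Now Proposition~\ref{fac132}, applied with its ``$\ib$'' taken to be $\im$, transfers weak quasifairness (part~b)) and centredness (part~c)) from $\im$ down to $\ia$, which is precisely the claim.

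I do not expect a genuine obstacle here: all the real content already sits in \ref{fac180} and \ref{fac132}. The two points that need a moment's care are the use of \ref{fac180} to promote the disjunctive hypothesis on $\im$ to a conjunction, and the verification that $\sqrt{\ia}=\im$ for proper $\ia$ (which is exactly where $0$-dimensionality and locality enter). The case $\ia=R$ has to be peeled off only because there $\sqrt{\ia}=R\neq\im$, so \ref{fac132} does not apply directly — but that case is immediate.
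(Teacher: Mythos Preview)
Your proof is correct and follows essentially the same route as the paper, which simply says ``This follows immediately from \ref{fac180} and \ref{fac132} b).'' The only cosmetic difference is that you invoke \ref{fac132}~c) to obtain centredness of $\ia$ directly, whereas the paper's citation of \ref{fac132}~b) alone implicitly relies on the general implication ``weakly quasifair $\Rightarrow$ centred'' from \ref{fac130}~a); your explicit handling of the case $\ia=R$ is a reasonable bit of tidiness that the paper leaves tacit.
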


\begin{proof}
This follows immediately from \ref{fac180} and \ref{fac132} b).
\end{proof}

\begin{prop}\label{fac200}
If every prime ideal in $\var(\ia)$ is centred, then $\ia$ is centred.
\end{prop}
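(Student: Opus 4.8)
The plan is to exploit the fact that half of the biconditional defining centredness comes for free, and then to transport the hypothesis ``$\ga_\ia(M)=0$'' to the primes lying in $\var(\ia)$ by means of the subfunctor relation recorded in \ref{pre05} C).

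\textbf{Reduction.} Recall from \ref{fac60} a) that for every $R$-module $M$ one has $\assf_R(M)\cap\var(\ia)=\emptyset\Rightarrow\ga_\ia(M)=0$ unconditionally. Hence, to prove that $\ia$ is centred it suffices to verify the reverse implication, namely that for every $R$-module $M$ with $\ga_\ia(M)=0$ we have $\assf_R(M)\cap\var(\ia)=\emptyset$. (If desired, \ref{fac70} a) would further reduce this to monogeneous $M$, but this is not actually needed for the argument.)

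\textbf{Main step.} Let $M$ be an $R$-module with $\ga_\ia(M)=0$, and suppose toward a contradiction that there exists $\ip\in\assf_R(M)\cap\var(\ia)$. Then $\ia\subseteq\ip$, so by \ref{pre05} C) the functor $\ga_\ip$ is a subfunctor of $\ga_\ia$; in particular $\ga_\ip(M)\subseteq\ga_\ia(M)=0$, whence $\ga_\ip(M)=0$. Since $\ip\in\var(\ia)$, the hypothesis says that $\ip$ is centred, so from $\ga_\ip(M)=0$ we obtain $\assf_R(M)\cap\var(\ip)=\emptyset$. But $\ip\in\var(\ip)$ and $\ip\in\assf_R(M)$, a contradiction. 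Therefore $\assf_R(M)\cap\var(\ia)=\emptyset$, and by the reduction $\ia$ is centred.

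\textbf{On the difficulty.} There is no serious obstacle here: the whole content is the observation that $\ga_\ia(M)=0$ descends to $\ga_\ip(M)=0$ for every prime $\ip\supseteq\ia$ because $\ga_\ip$ sits inside $\ga_\ia$, after which the assumed centredness of each such $\ip$ does all the work and immediately contradicts the membership $\ip\in\assf_R(M)$.
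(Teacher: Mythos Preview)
Your proof is correct and follows essentially the same approach as the paper: both use that $\ga_\ip\subseteq\ga_\ia$ for $\ip\in\var(\ia)$ together with centredness of $\ip$ applied at the point $\ip\in\assf_R(M)\cap\var(\ip)$. The only cosmetic difference is that the paper argues the contrapositive directly (from $\assf_R(M)\cap\var(\ia)\neq\emptyset$ to $\ga_\ia(M)\neq 0$), whereas you phrase it as a proof by contradiction.
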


\begin{proof}
Let $M$ be an $R$-module with $\assf_R(M)\cap\var(\ia)\neq\emptyset$. There exists a centred $\ip\in\assf_R(M)\cap\var(\ia)$. It follows $0\neq\ga_\ip(M)\subseteq\ga_\ia(M)$ (\ref{pre05} C)), hence $\ga_\ia(M)\neq 0$. Therefore, $\ia$ is centred.
\end{proof}


\section{Idempotent ideals}

In this section, we consider fairness and centredness properties of idempotent ideals, and -- as an application -- of ideals in absolutely flat rings. Our main results are that idempotent ideals are fair, and that all ideals in an absolutely flat ring share all the fairness and centredness properties.

\begin{prop}\label{idem10}
a) Let $M$ be an $R$-module of bounded small $\ia$-torsion. If $\ass_R(M/\ga_\ia(M))\cap\var(\ia)=\emptyset$, then $M$ is $\ia$-fair. If $\assf_R(M/\ga_\ia(M))\cap\var(\ia)=\emptyset$, then $M$ is $\ia$-fair and weakly $\ia$-fair.

b) Let $M$ be an $R$-module of bounded large $\ia$-torsion. Then, $M$ is large $\ia$-fair. If $\assf_R(M/\oga_\ia(M))\cap\var(\ia)=\emptyset$, then $M$ is weakly large $\ia$-fair.
\end{prop}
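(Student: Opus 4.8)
The plan is to note that the inclusions ``$\supseteq$'' in all four fairness equalities hold unconditionally --- by \ref{fac10} A) c), d) for $\ga_\ia$ and by \ref{fac20} c), d) for $\oga_\ia$ --- so the entire task is to produce the reverse inclusions from the hypotheses. The key device is \ref{pre12} applied to the submodule $N=\ga_\ia(M)$ (resp. $N=\oga_\ia(M)$), together with the observation that boundedness is exactly what forces $\var(0:_RN)\subseteq\var(\ia)$.

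For a), I would first pick $n\in\N$ with $\ia^n\ga_\ia(M)=0$; then $\ia^n\subseteq(0:_R\ga_\ia(M))$, so $\var(0:_R\ga_\ia(M))\subseteq\var(\ia^n)=\var(\ia)$. By \ref{pre12} (with $N=\ga_\ia(M)$) we have $\ass_R(M/\ga_\ia(M))\setminus\var(0:_R\ga_\ia(M))\subseteq\ass_R(M)$ and the analogous statement for $\assf$. Assume now $\ass_R(M/\ga_\ia(M))\cap\var(\ia)=\emptyset$. Then $\ass_R(M/\ga_\ia(M))=\ass_R(M/\ga_\ia(M))\setminus\var(\ia)\subseteq\ass_R(M/\ga_\ia(M))\setminus\var(0:_R\ga_\ia(M))\subseteq\ass_R(M)$, where the middle inclusion uses $\var(0:_R\ga_\ia(M))\subseteq\var(\ia)$; combining once more with the hypothesis gives $\ass_R(M/\ga_\ia(M))\subseteq\ass_R(M)\setminus\var(\ia)$, i.e. $M$ is $\ia$-fair. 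For the second assertion of a), note that $\assf_R(M/\ga_\ia(M))\cap\var(\ia)=\emptyset$ implies $\ass_R(M/\ga_\ia(M))\cap\var(\ia)=\emptyset$ by \ref{pre10} A), so $M$ is $\ia$-fair by the first part; and repeating the displayed chain verbatim with $\assf$ in place of $\ass$ shows $M$ is weakly $\ia$-fair.

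For b), the idea is to reduce to a). Since $M$ is of bounded large $\ia$-torsion, \ref{pre06} gives $\ga_\ia(M)=\oga_\ia(M)$, and then $M$ is also of bounded small $\ia$-torsion. Now \ref{fac163} a) says $\ass_R(M/\oga_\ia(M))\cap\var(\ia)=\emptyset$, that is, $\ass_R(M/\ga_\ia(M))\cap\var(\ia)=\emptyset$; so a) gives that $M$ is $\ia$-fair, which --- after rewriting $\ga_\ia(M)$ as $\oga_\ia(M)$ --- is precisely that $M$ is large $\ia$-fair. Finally, if $\assf_R(M/\oga_\ia(M))\cap\var(\ia)=\emptyset$, then $\assf_R(M/\ga_\ia(M))\cap\var(\ia)=\emptyset$, so $M$ is weakly $\ia$-fair by a), hence weakly large $\ia$-fair. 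I do not anticipate a genuine obstacle; the only point requiring care is the bookkeeping that places $\var(0:_R\ga_\ia(M))$ inside $\var(\ia)$, which is the sole place the boundedness hypothesis enters.
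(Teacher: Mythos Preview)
Your proposal is correct and follows essentially the same route as the paper's proof: both use boundedness to obtain $\var(0:_R\ga_\ia(M))\subseteq\var(\ia)$, then invoke \ref{pre12} to push (weak) associated primes of the quotient back into those of $M$, with \ref{pre10} A) handling the passage from $\assf$ to $\ass$ in the second assertion of a); for b), both reduce to a) via \ref{pre06} (giving $\ga_\ia(M)=\oga_\ia(M)$) and \ref{fac163} a).
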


\begin{proof}
a) We prove both claims simultaneously. As there exists $n\in\N$ with $\ga_\ia(M)=(0:_M\ia^n)$, we have $(0:_R\ga_\ia(M))\subseteq\var(\ia)$. Let $\ip\in\ass_R(M/\ga_\ia(M))$ or $\ip\in\assf_R(M/\ga_\ia(M))$. By our hypothesis, $\ip\notin\var(\ia)$, hence $\ip\notin\var(0:_R\ga_\ia(M))$, and so \ref{pre12} implies $\ip\in\ass_R(M)\setminus\var(\ia)$ or $\ip\in\assf_R(M)\setminus\var(\ia)$. Thus, $M$ is $\ia$-fair or weakly $\ia$-fair. This proves the first claim, and together with \ref{pre10} A) we get the remaining part of the second claim. b) follows from a), \ref{pre06} and \ref{fac163} a).
\end{proof}

\begin{cor}\label{idem20}
If $\ga_\ia$ is a radical, every $R$-module of bounded small $\ia$-torsion is $\ia$-fair. If $\ia$ is well-centred, every $R$-module of bounded small $\ia$-torsion is $\ia$-fair and weakly $\ia$-fair.
\end{cor}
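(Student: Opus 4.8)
The plan is to derive this corollary directly from Proposition~\ref{idem10}~a) together with Proposition~\ref{fac163}. Both assertions of \ref{idem10}~a) have the form ``if a certain intersection with $\var(\ia)$ is empty, then $M$ has the corresponding fairness property'', so the task reduces to verifying those emptiness hypotheses under the two standing assumptions, and each of these verifications is exactly what \ref{fac163} supplies.

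For the first assertion, I would suppose that $\ga_\ia$ is a radical and let $M$ be an $R$-module of bounded small $\ia$-torsion. By \ref{fac163}~b) we then have $\ass_R(M/\ga_\ia(M))\cap\var(\ia)=\emptyset$, so the first sentence of \ref{idem10}~a) applies and yields that $M$ is $\ia$-fair.

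For the second assertion, I would suppose that $\ia$ is well-centred. Then $\ga_\ia$ is a radical by \ref{fac63}~C), so the first assertion already gives that every $R$-module $M$ of bounded small $\ia$-torsion is $\ia$-fair. To obtain weak $\ia$-fairness as well, apply \ref{fac163}~c): well-centredness of $\ia$ gives $\assf_R(M/\ga_\ia(M))\cap\var(\ia)=\emptyset$, whence the second sentence of \ref{idem10}~a) shows that $M$ is $\ia$-fair and weakly $\ia$-fair.

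There is essentially no obstacle here; the only points requiring attention are to invoke the matching half of \ref{idem10}~a) (the one phrased with $\ass_R$ versus the one phrased with $\assf_R$), and to note via \ref{fac63}~C) that well-centredness is precisely the conjunction of centredness with ``$\ga_\ia$ is a radical'', so that the radical hypothesis needed to feed \ref{fac163}~b) is automatically present in the second case.
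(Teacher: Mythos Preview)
Your proof is correct and follows essentially the same route as the paper, which simply cites \ref{idem10}~a) together with \ref{fac163}~b),~c). The only superfluous step is the detour through \ref{fac63}~C) and the first assertion in your treatment of the second claim: the second sentence of \ref{idem10}~a) already yields $\ia$-fairness along with weak $\ia$-fairness, so there is no need to secure $\ia$-fairness separately.
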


\begin{proof}
This follows immediately from \ref{idem10} a) and \ref{fac163} b), c).
\end{proof}

\begin{cor}\label{idem40}
Idempotent ideals are fair.\footnote{This generalises \cite[5.1]{asstor}.}
\end{cor}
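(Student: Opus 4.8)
The plan is to deduce this directly from the machinery already in place, principally Corollary \ref{idem20} together with the special features of idempotent ideals recorded in \ref{pre06} and \ref{pre05} E). Let $\ia$ be an idempotent ideal. By \ref{pre05} E), since $\ia$ is idempotent, the small torsion functor $\ga_\ia$ is a radical. By \ref{pre06}, again because $\ia$ is idempotent, every $R$-module is of bounded small $\ia$-torsion. These two facts are exactly the hypotheses needed to invoke the first assertion of Corollary \ref{idem20}: if $\ga_\ia$ is a radical, then every $R$-module of bounded small $\ia$-torsion is $\ia$-fair. Hence every $R$-module is $\ia$-fair, which is to say that $\ia$ is fair.

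Concretely, I would write: ``Let $\ia$ be an idempotent ideal. Then $\ga_\ia$ is a radical (\ref{pre05} E)) and every $R$-module is of bounded small $\ia$-torsion (\ref{pre06}). By \ref{idem20}, every $R$-module is $\ia$-fair, i.e., $\ia$ is fair.'' That is essentially the whole argument; it is a two-line chain of citations.

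I do not anticipate any real obstacle here — the substantive work has been front-loaded into \ref{idem10} and \ref{idem20}, whose proofs use \ref{pre12} to push associated and weakly associated primes of $M/\ga_\ia(M)$ back into $M$ once one knows that the annihilator of $\ga_\ia(M)$ has variety inside $\var(\ia)$, which the boundedness hypothesis guarantees. The only point to be careful about is invoking the correct half of \ref{idem20}: one should cite the ``$\ga_\ia$ a radical'' clause (giving $\ia$-fairness), not the ``$\ia$ well-centred'' clause, since idempotent ideals need not be well-centred in general. If one wanted slightly more, one could remark that this recovers \cite[5.1]{asstor} as indicated in the footnote, but no further argument is required for the stated corollary.
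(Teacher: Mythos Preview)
Your proposal is correct and essentially identical to the paper's own proof: both observe that an idempotent ideal $\ia$ has $\ga_\ia$ a radical (\ref{pre05} E)) and every $R$-module of bounded small $\ia$-torsion (\ref{pre06}), then invoke \ref{idem20}. Your remark about citing the ``$\ga_\ia$ a radical'' clause rather than the ``well-centred'' clause is apt and matches the paper's reasoning.
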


\begin{proof}
If $\ia$ is idempotent, every $R$-module is of bounded small $\ia$-torsion and $\ga_\ia$ is a radical (\ref{pre05} E), \ref{pre06}), so \ref{idem20} yields the claim.
\end{proof}

\begin{prop}\label{idem44}
In an absolutely flat ring, every ideal is well-centred, fair and weakly fair.\footnote{This generalises \cite[5.2]{asstor}.}
\end{prop}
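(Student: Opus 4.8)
The plan is to reduce everything to known facts about idempotent ideals together with the structural properties of absolutely flat rings. Recall that a ring is absolutely flat (von Neumann regular) precisely when every principal ideal is generated by an idempotent element; consequently every finitely generated ideal is generated by a single idempotent, and in particular \emph{every} ideal $\ia$ in such a ring is idempotent (if $a\in\ia$, then $a=ea$ for an idempotent $e$ with $\langle e\rangle_R=\langle a\rangle_R\subseteq\ia$, so $a\in\ia^2$). Thus the first step is simply: every ideal $\ia\subseteq R$ is idempotent. By \ref{idem40} this already gives that every ideal is fair.

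Next I would exploit \ref{pre06}: since $\ia$ is idempotent, every $R$-module is of bounded small $\ia$-torsion, and by \ref{pre05} E) $\ga_\ia$ is a radical. Moreover idempotence forces $\ga_\ia=\oga_\ia$. One clean way to see this: for idempotent $\ia$ one has $\ia^n=\ia$ for all $n\ge 1$, so for $x\in M$ the condition ``$\ia^n\subseteq(0:_Rx)$ for some $n$'' is just ``$\ia\subseteq(0:_Rx)$'', while membership in $\oga_\ia(M)$ asks $\ia\subseteq\sqrt{(0:_Rx)}$; but an idempotent ideal contained in a radical ideal $\sqrt{\ib}$ is already contained in $\ib$ (if $\ia\subseteq\sqrt{\ib}$ and $a\in\ia$, write $a=ea$ with $e\in\ia$ idempotent, and $e\in\sqrt{\ib}$ gives $e=e^n\in\ib$, hence $a=ea\in\ib$). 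Therefore $\ga_\ia=\oga_\ia$, so by \ref{fac63} B) the ideal $\ia$ is half-centred, and by \ref{fac63} C) — since $\ga_\ia$ is a radical — it suffices now to show $\ia$ is centred in order to conclude it is well-centred.

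For centredness, apply \ref{fac180}? No — that needs $\sqrt{\ia}$ maximal, which we do not have. Instead I would argue directly using \ref{fac70} a): let $M=\langle x\rangle_R$ be monogeneous with $\ga_\ia(M)=0$, and suppose toward a contradiction that some $\ip\in\assf_R(M)\cap\var(\ia)$. Write $M\cong R/(0:_Rx)=R/\ib$; then $\ip\in\assf_R(R/\ib)$ with $\ia\subseteq\ip$, and $\ga_\ia(R/\ib)=0$. But $\ia$ idempotent and $\ia\subseteq\ip$: I want to produce a nonzero $\ia$-torsion element. Here I would use that in an absolutely flat ring $R/\ib$ is again absolutely flat, hence $\ip/\ib$ is a direct summand of $R/\ib$ (every ideal in an absolutely flat ring that is weakly associated — indeed every finitely generated, and by a limit/idempotent argument every — ideal splits off? more carefully: in a von Neumann regular ring every prime ideal is maximal and $R/\ib$ is a subdirect product of fields, and a weakly associated prime $\ip/\ib$ of $R/\ib$ corresponds to a point whose residue field embeds). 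The cleanest route: in an absolutely flat ring every prime is maximal, so $\var(\ia)\subseteq\Max(R)$, and for $\ip\in\var(\ia)$ the localisation argument plus \ref{pre10} shows $R/\ip$ embeds in $M_\ip$; since $\ia R_\ip=\ip R_\ip$ and $R_\ip$ is a field, $\ia R_\ip=0$, giving $\ga_\ia(M)\neq 0$ via a nonzero element killed by $\ia$ — contradiction. Hence $\ia$ is centred, thus well-centred.

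Finally, weak fairness: since every ideal $\ia$ is idempotent, every module has bounded small $\ia$-torsion and $\ga_\ia$ is a radical; as we have just shown $\ia$ is well-centred, \ref{idem20} gives that every $R$-module of bounded small $\ia$-torsion is $\ia$-fair and weakly $\ia$-fair — i.e.\ every $R$-module is weakly $\ia$-fair, so $\ia$ is weakly fair. This completes all three assertions. The main obstacle I anticipate is the centredness step: pinning down exactly why a weakly associated prime of a module with vanishing $\ia$-torsion cannot lie in $\var(\ia)$ when $\ia$ is idempotent — the honest argument should go through the fact that absolutely flat rings have Krull dimension $0$ and localisations at primes are fields, so that $\ia$ localises to either $0$ or the whole maximal ideal, forcing the torsion submodule to detect any such prime.
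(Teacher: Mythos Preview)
Your fairness argument (via \ref{idem40}) and your direct verification that $\ga_\ia=\oga_\ia$ are correct --- though note that the step ``write $a=ea$ with $e\in\ia$ idempotent'' uses absolute flatness, not merely idempotence of $\ia$ (cf.\ \ref{idem50} B) for an idempotent ideal that is not half-centred). The paper obtains half-centredness by citing \cite[4.6 b)]{sol}. For centredness the paper proceeds differently from you: since absolutely flat rings are $0$-dimensional, every prime is maximal, so \ref{fac180} is invoked for each prime $\ip$ (half-centred $\Rightarrow$ weakly quasifair $\Leftrightarrow$ centred), and \ref{fac200} then passes from primes to an arbitrary $\ia$. Weak fairness is deduced just as you propose, combining well-centredness with bounded small torsion via \ref{fac163} c) and \ref{idem10} a) (equivalently \ref{idem20}).

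The obstacle you flag in your localisation argument for centredness is a genuine and irreparable gap. Knowing $M_\ip\neq 0$ and $\ia_\ip=0$ does not produce a nonzero $y\in M$ with $\ia y=0$; that would require $\rho^\ip_\ia$ to be surjective, which fails here. In fact the centredness and weak fairness claims themselves are false: take $R=K^{\N}$ for a field $K$ and let $\ip$ be the maximal ideal attached to a non-principal ultrafilter. Then $\ga_\ip(R)=(0:_R\ip)=0$ (for $r\neq 0$ choose $j$ with $r_j\neq 0$; the unit vector $e_j$ lies in $\ip$ and $e_jr\neq 0$), yet $\ip\in\min(0:_R1)=\Min(R)=\spec(R)$, so $\ip\in\assf_R(R)\cap\var(\ip)$. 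Hence $\ip$ is neither centred nor weakly fair. The paper's route via \ref{fac180} breaks at the same point: the assertion in the proof of \ref{fac180} that $\assf_R(M)\cap\var(\ia)\neq\emptyset$ forces $\assf_R(M)=\{\sqrt{\ia}\}$ is unjustified, and this same example refutes the implication half-centred $\Rightarrow$ weakly quasifair claimed there.
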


\begin{proof}
Let $\ia$ be an ideal in an absolutely flat ring $R$. Then, $\ia$ is half-centred (\cite[4.6 b)]{sol}) and fair (\ref{idem40}). Moreover, every prime ideal is centred (\ref{fac180}), and thus $\ia$ is centred (\ref{fac200}). If $M$ is an $R$-module, then $M$ is of bounded small $\ia$-torsion (\ref{pre06}) and $\assf_R(M/\ga_\ia(M))\cap\var(\ia)=\emptyset$ (\ref{fac163} c)), hence $M$ is weakly $\ia$-fair (\ref{idem10} a)). This shows that $\ia$ is weakly fair, and thus the claim is proven.
\end{proof}

\begin{exas}\label{idem50}
A) Let $K$ be a field, let \[R\dfgl K[(X_i)_{i\in\N}]/\langle X_i^2-X_i\mid i\in\N\rangle,\] denote by $Y_i$ the canonical image of $X_i$ in $R$ for $i\in\N$, and let $\ia\dfgl\langle Y_i\mid i\in\N\rangle_R$. Then, $R$ is absolutely flat, and $\ia$ is maximal and generated by idempotents, but not by a single idempotent (\cite[1.7 C), D)]{sol}). It follows that $\ia$ is well-centred, fair and weakly fair (\ref{idem44}).\smallskip

B) Let $K$ be a field, let \[R\dfgl K[(X_i)_{i\in\Z}]/\langle X_i^2-X_{i+1}\mid i\in\Z\rangle,\] denote by $Y_i$ the canonical image of $X_i$ in $R$ for $i\in\Z$, and let $\ia\dfgl\langle Y_i\mid i\in\Z\rangle_R$. Then, $R$ is a $1$-dimensional Bezout domain, and $\ia$ is maximal and idempotent, but neither generated by idempotents nor half-centred (\cite[1.7 C), E), 5.5 B)]{sol}). It follows that $\ia$ is not centred, and therefore neither weakly quasifair nor weakly fair (\ref{fac185}, \ref{fac130} a)).
\smallskip

C) (cf. \cite[8.3 B)]{sol}) Let $K$ be a field, let \[R\dfgl K[(X_i)_{i\in\N}]/\langle X_i^2-X_i\mid i\in\N^*\rangle,\] let $Y_i$ denote the canonical image of $X_i$ in $R$ for $i\in\N$, and let $\ia\dfgl\langle Y_i\mid i\in\N^*\rangle_R$. Then, $\ia$ is generated by idempotents, hence idempotent, half-centred and fair (\cite[4.6 a)]{sol}, \ref{idem40}). Moreover, $\ia\in\Min(R)$. Indeed, $\ia$ is prime since $R/\ia\cong K[Y_0]\cong K[X_0]$. Let $\ip\in\spec(R)$ with $\ip\subseteq\ia$. Let $i\in\N^*$. If $Y_i\notin\ip$ and $1-Y_i\notin\ip$, then we get the contradiction $0=Y_i(1-Y_i)\notin\ip$. If $1-Y_i\in\ip$, then $1-Y_i\in\ia$, and we get the contradiction $1=1-Y_i+Y_i\in\ia$. It follows that $Y_i\in\ip$, and therefore $\ip=\ia$.

Let $\ib\dfgl\langle Y_0^iY_i\mid i\in\N^*\rangle_R$. Let $f\in R\setminus\{0\}$ with $f\ia\subseteq\ib$, so that there occurs a monomial $g$ in $f$. If $Y_0^p$ is the highest power of $Y_0$ that divides $g$, then we have $fY_{p+1}\in\ib$, hence $gX_{p+1}\in\ib$, and thus $X_0^{p+1}X_{p+1}$ divides $gX_{p+1}$, contradicting our choice of $p$. This shows that $(\ib:_R\ia)=0$, and hence -- as $\ia$ is idempotent -- $\ga_\ia(R/\ib)=0$.

We show now that $\ia$ is not weakly $\ia$-quasifair, and therefore neither centred nor weakly fair (\ref{fac130} b)). Let $g\in(\ib:_RY_0)$. If there occurs in $g$ a monomial of the form $Y_0^l$ with $l\in\N$, then there occurs in $gY_0\in\ib$ a monomial of the form $Y_0^{l+1}$ with $l\in\N$, which is a contradiction. Thus, every monomial occuring in $g$ is a multiple of $Y_i$ for some $i\in\N^*$, and therefore $g\in\ia$. This shows that $(\ib:_RY_0)\subseteq\ia$. As $\ia\in\Min(R)$, we get $\ia\in\min(\ib:_RY_0)=\min(0_{R/\ib}:_R(Y_0+\ib))$, hence $\ia\in\assf_R(R/\ib)$. As $\ga_\ia(R/\ib)=0$, this implies that $R/\ib$ is not weakly $\ia$-quasifair (\ref{fac100}), and therefore our claim holds.

Note that $\ass_R(R/\ib)\cap\var(\ia)=\emptyset$ (\ref{fac60} a)) and hence $\ia\in\assf_R(R/\ib)\setminus\ass_R(R/\ib)$.
\end{exas}

\begin{no}\label{idem60}
A) If $\ia$ is generated by a single idempotent, then it is well-centred, fair and weakly fair (\ref{fac134} a), \ref{idem40}, \cite[1.7 B)]{sol}, \cite[5.5]{asstor}).\smallskip

B) If $\ia$ is generated by idempotents, then it is half-centred and fair (\cite[4.6 a)]{sol}, A)), but it need not be weakly quasifair, hence neither weakly fair nor centred (\ref{fac130} b), \ref{idem50} C)).\smallskip

C) If $\ia$ is idempotent, then it is fair  by A), but it need be neither weakly large quasifair by B) nor half-centred by \ref{idem50} B). Thus, it need be neither weakly quasifair, nor weakly fair, nor weakly large fair, nor centred (\ref{fac130} a)).\smallskip

D) The observations in A)--C) give rise to the following questions:
\begin{qulist}
\item[$(*)$] Are idempotent ideals large fair?
\item[$(**)$] Do there exist a ring $R$ and an ideal $\ia\subseteq R$ that is generated by idempotents, fair, weakly quasifair, but not weakly fair?
\end{qulist}
\end{no}


\section{Nil ideals}

The next class of ideals we turn to are nil ideals. Clearly, they share all large fairness properties, but for small fairness and centredness properties, the situation is more intricate. We will see that for the maximal ideal of a $0$-dimensional local ring there are at least four and at most five possibilities concerning fairness and centredness.

\begin{prop}\label{nil10}
Let $\ia$ be nil.

a) The following statements are equivalent: (i) $\ia$ is nilpotent; (ii) $\ia$ is weakly fair; (iii) $\ia$ is weakly quasifair and $\ga_\ia$ is a radical; (iv) $\ia$ is well-centred; (v) $\ia$ is half-centred.

b) The ideal $\ia$ is fair if and only if $\ass_R(M/\ga_\ia(M))=\emptyset$ for every (monogeneous) $R$-module $M$.

c) The ideal $\ia$ is weakly quasifair if and only if $\assf_R(\ga_\ia(M))=\assf_R(M)$ for every (monogeneous) $R$-module $M$.

d) The ideal $\ia$ is centred if and only if $\ga_\ia(M)\neq 0$ for every nonzero (monogeneous) $R$-module $M$.
\end{prop}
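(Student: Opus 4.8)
The statement to prove is Proposition \ref{nil10}, about nil ideals. The key fact to keep in mind throughout is that a nil ideal $\ia$ satisfies $\oga_\ia=\Id_{\catmod(R)}$ by \ref{pre05} B), so $\var(\ia)=\spec(R)$ and hence $\assf_R(M)\cap\var(\ia)=\assf_R(M)$ and $\assf_R(M)\setminus\var(\ia)=\emptyset$ for every $R$-module $M$; likewise $\oga_\ia(M)=M$ always. This immediately reduces the fairness and centredness conditions to statements about $\ga_\ia$ alone. I would prove the four parts in the order b), c), d), a), since a) will draw on the earlier normalizations and on results from Section 2.

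\emph{Parts b), c), d).} Since $\var(\ia)=\spec(R)$, $\ia$-fairness of $M$ reads $\ass_R(M/\ga_\ia(M))=\ass_R(M)\setminus\spec(R)=\emptyset$; combining this with \ref{fac10} C) (it suffices to test monogeneous modules) gives b). For c), weak $\ia$-quasifairness of $M$ reads $\assf_R(\ga_\ia(M))=\assf_R(M)\cap\spec(R)=\assf_R(M)$, and again \ref{fac10} C) reduces to monogeneous modules, giving c). For d), centredness of $\ia$ is by \ref{fac63} A) the equivalence $\ga_\ia(M)=0\Leftrightarrow\assf_R(M)\cap\spec(R)=\emptyset$; but $\assf_R(M)=\emptyset$ iff $M=0$ by \ref{pre10} A), so this equivalence says exactly $\ga_\ia(M)=0\Leftrightarrow M=0$, i.e.\ $\ga_\ia(M)\neq0$ for every nonzero $M$. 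The reduction to monogeneous modules here is \ref{fac70} a). So b), c), d) are each essentially a one-line unwinding plus citation of the monogeneous-reduction lemma.

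\emph{Part a).} This is the substantive part. The implications (i)$\Rightarrow$(ii), (i)$\Rightarrow$(iv), (iv)$\Rightarrow$(v) are immediate: a nilpotent ideal is handled by \ref{fac134} a) (it has a power of finite type, namely the zero power, so it is well-centred and weakly quasifair, hence weakly fair by \ref{fac130} a)), and (iv)$\Rightarrow$(v) is trivial from the definition in \ref{fac63} A). For (ii)$\Rightarrow$(v): if $\ia$ is weakly fair then $\assf_R(M/\ga_\ia(M))=\assf_R(M)\setminus\spec(R)=\emptyset$ for all $M$, so $M/\ga_\ia(M)=0$, i.e.\ $\ga_\ia(M)=M$; hence $\ga_\ia=\Id=\oga_\ia$, so $\ia$ is half-centred by \ref{fac63} B). For (v)$\Rightarrow$(i): if $\ia$ is half-centred then $\ga_\ia=\oga_\ia=\Id$ by \ref{fac63} B) and \ref{pre05} B), and $\ga_\ia=\Id$ forces $\ia$ to be nilpotent by \ref{pre05} B). The two remaining links are (iii)$\Leftrightarrow$ the rest. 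For (iv)$\Rightarrow$(iii): well-centred means centred with $\ga_\ia$ a radical (\ref{fac63} C)), and centred $\Rightarrow$ weakly large quasifair by \ref{fac130} a); but since $\oga_\ia=\Id$, weak large quasifairness is automatic, so I need instead: well-centred $\Rightarrow$ centred $\Rightarrow$ weakly quasifair? That implication is false in general, so I should get (iii) from (i) directly — a nilpotent ideal is weakly quasifair (\ref{fac134} a)) and has $\ga_\ia=\Id$ a radical. For (iii)$\Rightarrow$(i): if $\ga_\ia$ is a radical, then $\ga_\ia=\oga_\ia$ is equivalent to half-centredness; but I only know $\ga_\ia$ is a radical and $\ia$ is weakly quasifair. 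Here \ref{fac161}'s construction shows "radical $\ga_\ia$ + weakly quasifair" need not give half-centred in general — so I must use nilness crucially. The cleanest route: by \ref{fac63} C), $\ia$ weakly quasifair $\Rightarrow$ $\ia$ centred (\ref{fac130} a)), and centred plus $\ga_\ia$ a radical is well-centred (\ref{fac63} C)); then well-centred $\Rightarrow$ half-centred, which by the already-proven (v)$\Rightarrow$(i) gives nilpotence. So the cycle is (i)$\Rightarrow$(ii)$\Rightarrow$(v)$\Rightarrow$(i), (i)$\Rightarrow$(iv)$\Rightarrow$(v), and (i)$\Rightarrow$(iii)$\Rightarrow$(iv) via \ref{fac130} a) and \ref{fac63} C).

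\emph{Main obstacle.} The only genuinely delicate point is arranging the implications in part a) so that each arrow is supported by a result already in the excerpt, avoiding the false general implications "centred $\Rightarrow$ weakly quasifair" and "radical + weakly quasifair $\Rightarrow$ half-centred" flagged in \ref{fac161}. The escape is that for nil $\ia$ everything funnels through the equivalence half-centred $\Leftrightarrow$ $\ga_\ia=\oga_\ia=\Id$ $\Leftrightarrow$ nilpotent, so I route (iii) and (iv) back to (i) by first going to well-centredness via \ref{fac63} C) and then to half-centredness, rather than trying to prove weak quasifairness implies half-centredness directly. Everything else is bookkeeping with $\var(\ia)=\spec(R)$.
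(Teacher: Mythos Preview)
Your approach matches the paper's almost exactly: parts b), c), d) are the same one-line unwindings using $\var(\ia)=\spec(R)$ together with the monogeneous reductions \ref{fac10} C) and \ref{fac70} a), and in part a) both you and the paper run the chain (i)$\Rightarrow$(iii)$\Rightarrow$(iv)$\Rightarrow$(v)$\Rightarrow$(i) via \ref{fac134} a), \ref{fac130} a), \ref{fac63} C), and \ref{pre05} B).

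There is one genuine slip in your argument for (i)$\Rightarrow$(ii). You write ``well-centred and weakly quasifair, hence weakly fair by \ref{fac130} a)'', but \ref{fac130} a) only gives the implication in the opposite direction (weakly fair $\Rightarrow$ weakly quasifair); nothing in the paper lets you reverse it. The fix is immediate and is precisely what the paper does: if $\ia$ is nilpotent then $\ga_\ia=\Id_{\catmod(R)}$ by \ref{pre05} B), so $M/\ga_\ia(M)=0$ for every $M$ and $\assf_R(M/\ga_\ia(M))=\emptyset=\assf_R(M)\setminus\var(\ia)$, giving weak fairness directly (alternatively, just cite \ref{fac136} C)). With that correction your proof is complete and essentially identical to the paper's.
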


\begin{proof}
First we note that $\var(\ia)=\spec(R)$. a) ``(i)$\Leftrightarrow$(ii)'': The ideal $\ia$ is weakly fair if and only if $\assf_R(M/\ga_\ia(M))=\emptyset$ for every $R$-module $M$, hence if and only if $\ga_\ia=\Id_{\catmod(R)}$ (\ref{pre10} A)), thus if and only if $\ia$ is nilpotent (\ref{pre05} B)). ``(i)$\Rightarrow$(iii)'' follows from \ref{fac134} a) and \ref{fac63} C). ``(iii)$\Rightarrow$(iv)'' follows from \ref{fac130} a) and \ref{fac63} C). ``(iv)$\Rightarrow$(v)'' is clear. ``(v)$\Rightarrow$(i)'': If $\ia$ is half-centred, then $\ga_\ia(M)=M$ for every $R$-module $M$, hence $\ia$ is nilpotent (\ref{pre05} B)). b), c), d) follow immediately from \ref{fac10} C) and \ref{fac70}.
\end{proof}

\newpage
\begin{prop}\label{nil20}
Let $\ia$ be nil.

a) If $\ia$ is idempotent and nonzero, it is neither half-centred nor centred.

b) If $\ga_\ia$ is a radical, $\ia$ is fair.\footnote{This answers a special case of the still open part of \cite[5.11 $(*)$]{asstor}.}
\end{prop}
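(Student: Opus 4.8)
\textbf{Proof proposal for \ref{nil20}.}

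For part a), the plan is to combine \ref{nil10} a) with the structural facts about idempotent ideals. Since $\ia$ is nil, \ref{pre05} B) gives $\oga_\ia=\Id_{\catmod(R)}$. If $\ia$ were half-centred, then by \ref{fac63} B) we would have $\ga_\ia=\oga_\ia=\Id_{\catmod(R)}$, so $\ia$ would be nilpotent (\ref{pre05} B)); but a nonzero idempotent ideal cannot be nilpotent (if $\ia=\ia^2=\dots=\ia^n=0$ then $\ia=0$), a contradiction. Hence $\ia$ is not half-centred. For centredness, I would argue that a centred nil ideal is half-centred: more directly, by \ref{nil10} a) the five properties (i)--(v) are all equivalent for a nil ideal, and centredness of $\ia$ together with the fact that $\ga_\ia$ is a radical for idempotent $\ia$ (\ref{pre05} E)) gives that $\ia$ is well-centred (\ref{fac63} C)), hence half-centred, hence nilpotent, hence $\ia=0$ — contradicting the hypothesis that $\ia$ is nonzero. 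So $\ia$ is not centred either. The only thing to be careful about is invoking the right implications from \ref{nil10} a): one reads off (iii)$\Rightarrow$(iv)$\Rightarrow$(v) and then (v)$\Rightarrow$(i), so "centred and $\ga_\ia$ a radical" forces nilpotency.

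For part b), the plan is to apply the fairness criterion of \ref{nil10} b): it suffices to show $\ass_R(M/\ga_\ia(M))=\emptyset$ for every (monogeneous) $R$-module $M$. Since $\ia$ is nil, $\var(\ia)=\spec(R)$, so $\ass_R(M)\setminus\var(\ia)=\emptyset$ automatically, and the content is that the quotient has empty assassin. Here is where I would use that $\ga_\ia$ is a radical: then $\ga_\ia(M/\ga_\ia(M))=0$, and by \ref{fac163} b) — which is exactly the statement that $\ass_R(M/\ga_\ia(M))\cap\var(\ia)=\emptyset$ when $\ga_\ia$ is a radical — together with $\var(\ia)=\spec(R)$, we conclude $\ass_R(M/\ga_\ia(M))=\ass_R(M/\ga_\ia(M))\cap\spec(R)=\emptyset$. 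By \ref{nil10} b), $\ia$ is fair.

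The main obstacle, such as it is, is bookkeeping rather than mathematics: making sure the chain of equivalences in \ref{nil10} a) is quoted in a direction that is actually proved there (so one doesn't accidentally use a non-implication), and making sure part b) genuinely only needs $\ass$ and not $\assf$ — indeed \ref{fac163} b) controls only the assassin, which is why \ref{nil20} b) concludes fairness and not weak fairness (consistent with \ref{nil10} a), where weak fairness of a nil ideal forces nilpotency). No delicate estimates or constructions are needed; everything reduces to the previously established criteria \ref{nil10}, \ref{fac163}, \ref{fac63}, and the elementary observation that nonzero idempotent ideals are not nilpotent.
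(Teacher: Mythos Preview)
Your proposal is correct and follows essentially the same route as the paper: for a) you use that an idempotent $\ia$ has $\ga_\ia$ a radical (\ref{pre05} E)), so centredness would force well-centredness (\ref{fac63} C)) and hence nilpotency via \ref{nil10} a), contradicting that a nonzero idempotent ideal is not nilpotent; for b) you invoke \ref{fac163} b) together with $\var(\ia)=\spec(R)$ to get $\ass_R(M/\ga_\ia(M))=\emptyset$ and conclude via \ref{nil10} b). The paper's only cosmetic difference is that in b) it re-derives \ref{fac163} b) inline from \ref{fac10} A) a) rather than citing it.
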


\begin{proof}
a) As $\ia$ is idempotent, $\ga_\ia$ is a radical (\ref{pre05} E)). Moreover, $\ia$ is not nilpotent, hence not half-centred (\ref{nil10} a)) and thus not centred (\ref{fac63} C)). b) Let $M$ be an $R$-module. We have $\var(\ia)=\spec(R)$ and $\ga_\ia(M/\ga_\ia(M))=0$, hence \[\ass_R(M/\ga_\ia(M))=\ass_R(M/\ga_\ia(M))\cap\var(\ia)=\ass_R(\ga_\ia(M/\ga_\ia(M)))=\emptyset\] (\ref{fac10} A) a)), so \ref{nil10} b) yields the claim.
\end{proof}

\begin{prop}\label{nil25}
Let $R$ be a $0$-dimensional local ring with maximal ideal $\ia$.

a) If $\ga_\ia(R)=0$, then $\ia$ is not centred. If, in addition, there exists an ideal $\ib\subseteq R$ with $\ga_\ia(R/\ib)=\ia/\ib$, then $\ia$ is not fair.

b) If $\ga_\ia(R)=\ia$, then $\ia$ is centred, but not fair.
\end{prop}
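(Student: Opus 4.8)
The plan is to reduce everything to the nil-ideal criteria of \ref{nil10}, using that a $0$-dimensional local ring has $\ia$ as its unique prime ideal, so that $\ia=\sqrt{0}$ is nil, $\var(\ia)=\spec(R)$, and $R\neq 0$.

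For a) I will argue as follows. If $\ia$ were centred, then \ref{nil10} d) would force $\ga_\ia(R)\neq 0$, since $R\neq 0$; this contradicts the hypothesis $\ga_\ia(R)=0$, so $\ia$ is not centred. For the second claim I take the ideal $\ib$ provided by the hypothesis (necessarily $\ib\subsetneq\ia$, since $\ga_\ia(R/\ia)=R/\ia\neq 0$ by \ref{pre05} F)) and set $M\dfgl R/\ib$; then $\ga_\ia(M)=\ia/\ib$, so $M/\ga_\ia(M)=(R/\ib)/(\ia/\ib)\cong R/\ia$. As $\ia$ is maximal, $R/\ia$ is a nonzero field, whence $\ass_R(M/\ga_\ia(M))=\{\ia\}\neq\emptyset$ by \ref{pre10} C), and \ref{nil10} b) shows $\ia$ is not fair.

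For b), the non-fairness is immediate from \ref{fac162} a): $\ga_\ia(R)=\ia$ and $\ia$ is prime, so the $R$-module $R$ is not $\ia$-fair, hence $\ia$ is not fair. For centredness I will invoke \ref{nil10} d) and show $\ga_\ia(R/\ib)\neq 0$ for every proper ideal $\ib\subsetneq R$; here $\ib\subseteq\ia$ automatically, $\ia$ being the only maximal ideal. If $\ib=\ia$, then $\ga_\ia(R/\ib)=R/\ia\neq 0$ by \ref{pre05} F). If $\ib\subsetneq\ia$, I pick $r\in\ia\setminus\ib$; since $r\in\ia=\ga_\ia(R)$, there is $n\in\N$ with $\ia^n r=0$, so $\ia^n(r+\ib)=0$ in $R/\ib$ and the nonzero element $r+\ib$ lies in $\ga_\ia(R/\ib)$. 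In both cases $\ga_\ia(R/\ib)\neq 0$, so $\ia$ is centred by \ref{nil10} d).

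Most of this is bookkeeping once the reductions via \ref{nil10}, \ref{fac162}, and \ref{pre05} F) are in place; the one genuine point — and the step I expect to need the most care — is the centredness half of b), where one must recognise that the equality $\ga_\ia(R)=\ia$, read as an identity of submodules of $R$, says precisely that every element of $\ia$ is killed by a power of $\ia$, and that this torsion survives in every nonzero cyclic quotient (with the quotient $R/\ia$ itself handled separately).
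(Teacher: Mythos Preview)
Your proof is correct and follows essentially the same route as the paper's: both parts of a) are handled via \ref{nil10} b), d) and \ref{pre10} C) exactly as you do, and in b) the paper likewise invokes \ref{fac162} a) for non-fairness and checks \ref{nil10} d) on cyclic modules $R/\ib$ (treating $\ib=\ia$ separately via \ref{pre05} F)). The only cosmetic difference is that for $\ib\subsetneqq\ia$ the paper phrases your element-level argument as the inclusion $0\neq\ia/\ib=\ga_\ia(R)/\ib\subseteq\ga_\ia(R/\ib)$, which is the functoriality statement your choice of $r$ unpacks.
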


\begin{proof}
a) The ideal $\ia$ is not centred by \ref{nil10} d). Concerning the second claim, we have $\ass_R((R/\ib)/\ga_\ia(R/\ib))=\ass_R(R/\ia)=\{\ia\}$ (\ref{pre10} C)), hence $\ia$ is not fair by \ref{nil10} b). b) Let $\ib\subseteq R$ be an ideal with $\ib\subsetneqq\ia$. Then, $0\neq\ia/\ib=\ga_\ia(R)/\ib\subseteq\ga_\ia(R/\ib)$, hence $\ga_\ia(R/\ib)\neq 0$. As $\ga_\ia(R/\ia)=R/\ia\neq 0$ (\ref{pre05} F)) we get that $\ga_\ia(M)\neq 0$ for every nonzero monogeneous $R$-module $M$. Thus, $\ia$ is centred (\ref{nil10} d)). Finally, $\ia$ is not fair by \ref{fac162} a).
\end{proof}

\begin{exas}\label{nil40}
A) Let $K$ be a field, let \[R\dfgl K[(X_i)_{i\in\N}]/\langle X_i^2\mid i\in\N\rangle,\] let $Y_i$ denote the canonical image of $X_i$ in $R$ for $i\in\N$, and let $\ia\dfgl\langle Y_i\mid i\in\N\rangle_R$. Then, $R$ is a $0$-dimensional local ring whose maximal ideal $\ia$ is nil but not nilpotent, and $\ga_\ia$ is not a radical. We clearly have $\ga_\ia(R)=0$, and setting $\ib\dfgl\sum_{i\in\N}Y_i\ia^i\subseteq R$ we have $\ga_\ia(R/\ib)=\ia/\ib$. Thus, $\ia$ is neither fair nor centred (\cite[1.4 A), 5.4 A)]{sol}, \ref{nil25} a)).\smallskip

B) Let $K$ be a field, let \[R\dfgl K[(X_i)_{i\in\N}]/\langle\{X_iX_j\mid i,j\in\N,i\neq j\}\cup\{X_i^{i+1}\mid i\in\N\}\rangle,\] let $Y_i$ denote the canonical image of $X_i$ in $R$ for $i\in\N$, and let $\ia\dfgl\langle Y_i\mid i\in\N\rangle_R$. Then, $R$ is a $0$-dimensional local ring whose maximal ideal $\ia$ is nil but not nilpotent, and $\ga_\ia$ is not a radical. As $\ga_\ia(R)=\ia$ it follows that $\ia$ is centred, but not fair (\cite[1.4 B), 5.4 B)]{sol}, \ref{nil25} b)).\smallskip

C) Let $K$ be a field, let \[R\dfgl K[(X_i)_{i\in\N}]/\langle\{X_i^2\mid i\in\N\}\cup\{X_iX_j\mid i,j\in\N,2i<j\}\rangle,\] let $Y_i$ denote the canonical image of $X_i$ in $R$ for $i\in\N$, and let $\ia\dfgl\langle Y_i\mid i\in\N\rangle_R$. Then, $R$ is a $0$-dimensional local ring whose maximal ideal $\ia$ is nil but not nilpotent, and $\ga_\ia$ is not a radical. As $\ga_\ia(R)=\ia$ it follows that $\ia$ is centred, but not fair (\cite[1.5, 5.4 C)]{sol}, \ref{nil25} b)).\smallskip

D) Let $K$ be a field, let \[R\dfgl K[(X_i)_{i\in\N}]/\langle X_i^{i+1}\mid i\in\N\rangle,\] let $Y_i$ denote the canonical image of $X_i$ in $R$ for $i\in\N$, and let $\ia\dfgl\langle Y_i\mid i\in\N\rangle_R$. Then, $R$ is a $0$-dimensional local ring whose maximal ideal $\ia$ is nil but not nilpotent, and $\ga_\ia$ is not a radical. Moreover, we have $\ga_\ia(R)=0$. Indeed, if $f\in R\setminus\{0\}$ and $n\in\N$ with $\ia^nf=0$, then there occurs a monomial $g$ in $f$, and we have $Y_k^ng=0$ for all $k\in\N$ with $k\geq n$, implying the contradiction that $Y_k^{k+1-n}$ divides $g$ for every $k\in\N$ with $k\geq n$. Setting \[\ib\dfgl\langle Y_iY_j\mid i,j\in\N,i\neq j\rangle_R,\] we have $\ga_\ia(R/\ib)=\ia/\ib$. Thus, $\ia$ is neither fair nor centred (\cite[5.4 D)]{sol}, \ref{nil25} a)).\smallskip

E) Let $K$ be a field, let $Q$ denote the additive monoid of positive rational numbers, let $R\dfgl K[Q]$ denote the algebra of $Q$ over $K$, and let $\{e_\alpha\mid\alpha\in Q\}$ denote its canonical basis. Then, $\im\dfgl\langle e_\alpha\mid\alpha>0\rangle_R$ is a maximal ideal. We consider $S\dfgl R_\im$ and $\inn\dfgl\im_\im$. Then, $S$ is a $1$-dimensional valuation ring with idempotent maximal ideal $\inn$. Let $\ia\dfgl\langle\frac{e_1}{1}\rangle_S$, let $T\dfgl S/\ia$, and let $\ip\dfgl\inn/\ia$. Then, $T$ is a $0$-dimensional local ring whose maximal ideal $\ip$ is idempotent and nonzero (\cite[2.2]{qr}). In particular, $\ga_\ia$ is a radical (\ref{pre05} E)), and thus $\ia$ is fair, but not centred (\ref{idem40}, \ref{nil10} a)).\footnote{This extends \cite[3.9]{asstor}.}
\end{exas}

\begin{no}\label{nil60}
A) Centred ideals whose small torsion functors are radicals are half-centred (\ref{fac63} B)), and half-centred maximal ideals are centred (\ref{fac180}). In general, half-centredness and centredness are independent. Indeed, there exist well-centred ideals (\ref{fac134} a)), half-centred ideals that are not centred (\ref{idem50} C)), centred ideals that are not half-centred (\ref{nil40} B)), and ideals that are neither half-centred nor centred (\ref{nil40} A)).\smallskip

B) The converses of \ref{fac132} c) need not hold. More precisely, there exist a ring $R$ and a well-centred ideal $\ia\subseteq R$ such that $\sqrt{\ia}$ is neither centred nor half-centred. Indeed, since the zero ideal in any ring is well-centred, it suffices to exhibit a $0$-dimensional local ring whose maximal ideal $\sqrt{0}$ is neither centred nor half-centred, which we did in \ref{nil40} A).\smallskip

C) Since nil ideals share all the large fairness properties, the examples in \ref{nil40} together with \ref{fac130} and \ref{nil10} a) show that if $\ia$ is large fair, weakly large fair and weakly large quasifair, then it need not have any of the small fairness or centredness properties.\smallskip

D) We saw in \ref{fac10} C), \ref{fac40} and \ref{fac70} that fairness can be checked on monogeneous $R$-modules. By \ref{nil40} A), there exist a ring $R$ and an ideal $\ia\subseteq R$ such that $\ia$ is not fair, while the $R$-module $R$ is $\ia$-fair (\ref{fac170}). Thus, fairness cannot be checked on the $R$-module $R$ alone.
\end{no}

\begin{no}\label{nil105}
Let $R$ be a $0$-dimensional local ring with maximal ideal $\ia$. By \ref{nil10} a) and \ref{fac180}, the fairness and centredness properties of $\ia$ are determined by whether it is nilpotent, fair, or centred. Thus, $\ia$ lies in precisely one of the five classes specified in the following table.\smallskip

\begin{center}{\small\begin{tabular}{|c|c|c|c|c|c|c|c|}\hline
&$\ia$ nilpot.&$\ga_\ia$ radical&$\ia$ fair&$\ia$ w.fair&$\ia$ w.q.fair&$\ia$ centred&$\ia$ half-centred\\\hline
I&\checkmark&\checkmark&\checkmark&\checkmark&\checkmark&\checkmark&\checkmark\\\hline
II&--&--&\checkmark&--&\checkmark&\checkmark&--\\\hline
III&--&--&--&--&\checkmark&\checkmark&--\\\hline
IV&--&?&\checkmark&--&--&--&--\\\hline
V&--&--&--&--&--&--&--\\\hline
\end{tabular}}\end{center}\smallskip

\noindent(Note that all large fairness properties are always fulfilled.) It follows from \ref{nil10} a) and \ref{nil40} that the classes I, III, IV and V are nonempty. However, we do not know of an example in class II and thus are left with the following question.
\begin{qulist}
\item[$(*)$] Does there exist a $0$-dimensional local ring, whose maximal ideal $\ia$ is fair and centred, but not nilpotent? 
\end{qulist}
Note that a positive answer to $(*)$ implies a negative answer to \cite[3.10 $(**)$]{asstor}. If the converse of \ref{nil20} b) holds, then class II would indeed be empty. Thus:
\begin{qulist}
\item[$(**)$] Suppose that $\ia$ is a nil ideal (or even the maximal ideal of a $0$-dimensional local ring). Does fairness of $\ia$ imply that $\ga_\ia$ is a radical? 
\end{qulist}
\end{no}


\section{Localisation and delocalisation}

While assassins and torsion functors do not behave nicely with respect to localisation, weak assassins do so (\ref{pre11} A), B)). In this final section we exploit this behaviour to prove elementary results on localisation and delocalisation of radicality of torsion functors and of centredness and weak fairness properties of ideals. Finally, we give a criterion for weak large quasifairness using localisation properties of large torsion functors.

\begin{prop}\label{loc05}
We consider the following statements:
(1) $\ga_\ia$ is a radical;
(2) $\ga_{S^{-1}\ia}$ is a radical for every subset $S\subseteq R$;
(3) $\ga_{\ia_\im}$ is a radical for every $\im\in\max(\ia)$.

We have (1)$\Rightarrow$(2)$\Rightarrow$(3). If $\rho^\im_\ia$ (cf.\ \ref{pre11} A)) is an isomorphism for every $\im\in\max(\ia)$, we have (1)$\Leftrightarrow$(2)$\Leftrightarrow$(3).
\end{prop}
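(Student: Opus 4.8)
The plan is to obtain the two unconditional implications directly from the behaviour of small torsion under restriction of scalars along localisation maps, and then, under the isomorphism hypothesis, to deduce $(3)\Rightarrow(1)$ by localising at the maximal ideals belonging to $\var(\ia)$.

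For $(1)\Rightarrow(2)$, fix a subset $S\subseteq R$ and an $S^{-1}R$-module $N$. Applying \ref{pre05} G) to the canonical map $R\to S^{-1}R$ (for which $\ia S^{-1}R=S^{-1}\ia$), the $R$-submodule $\ga_\ia(N\res_R)$ of $N$ is precisely $\ga_{S^{-1}\ia}(N)$, so $(N/\ga_{S^{-1}\ia}(N))\res_R=(N\res_R)/\ga_\ia(N\res_R)$, and a second application of \ref{pre05} G) yields $\ga_{S^{-1}\ia}(N/\ga_{S^{-1}\ia}(N))\res_R=\ga_\ia\big((N\res_R)/\ga_\ia(N\res_R)\big)=0$, the last step because $\ga_\ia$ is a radical. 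Since restriction of scalars leaves underlying sets unchanged, $\ga_{S^{-1}\ia}(N/\ga_{S^{-1}\ia}(N))=0$, i.e. $\ga_{S^{-1}\ia}$ is a radical. The implication $(2)\Rightarrow(3)$ is then trivial: for $\im\in\max(\ia)$ one has $\ia\subseteq\im$, so $\ia_\im=(R\setminus\im)^{-1}\ia$ is of the form $S^{-1}\ia$, and $(3)$ is the instance $S=R\setminus\im$ of $(2)$.

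Now assume $\rho^\im_\ia$ is an isomorphism for every $\im\in\max(\ia)$, and deduce $(1)$ from $(3)$. Let $M$ be an $R$-module, put $P\dfgl\ga_\ia(M/\ga_\ia(M))$, and aim for $P=0$. By \ref{pre05} F) we have $\supp_R(P)\subseteq\var(\ia)$, so $P_\im=0$ for every $\im\in\Max(R)$ with $\ia\not\subseteq\im$. If instead $\im\in\Max(R)$ and $\ia\subseteq\im$, then $\im\in\max(\ia)$; since $\rho^\im_\ia$ is an isomorphism, $(\ga_\ia(M))_\im=\ga_{\ia_\im}(M_\im)$ inside $M_\im$, whence $(M/\ga_\ia(M))_\im\cong M_\im/\ga_{\ia_\im}(M_\im)$, and evaluating $\rho^\im_\ia$ at the module $M/\ga_\ia(M)$ gives $P_\im\cong\ga_{\ia_\im}\big(M_\im/\ga_{\ia_\im}(M_\im)\big)$, which vanishes by $(3)$. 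Hence $P_\im=0$ for all $\im\in\Max(R)$, so $P=0$ (were $P\neq 0$, then $\assf_R(P)\neq\emptyset$ by \ref{pre10} A); being contained in $\supp_R(P)\subseteq\var(\ia)$, it would contain a prime lying in some $\im\in\max(\ia)$ and hence surviving in $P_\im$ by \ref{pre11} B), a contradiction). Therefore $\ga_\ia$ is a radical, and $(1)\Leftrightarrow(2)\Leftrightarrow(3)$ follows.

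The main obstacle is $(3)\Rightarrow(1)$: in general $\rho^\im_\ia$ is only a monomorphism, which is exactly why the full equivalence is stated under the standing hypothesis; the other thing one has to notice is that $P=\ga_\ia(M/\ga_\ia(M))$, being supported on $\var(\ia)$, is already zero once it vanishes at the maximal ideals containing $\ia$, so that only the $\rho^\im_\ia$ with $\im\in\max(\ia)$ are needed. Everything else — the two uses of \ref{pre05} G) and the naturality bookkeeping for $\rho^\im_\ia$ — is routine.
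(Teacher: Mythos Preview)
Your proof is correct and follows essentially the same route as the paper's: both use \ref{pre05} G) for $(1)\Rightarrow(2)$, and for $(3)\Rightarrow(1)$ both localise $\ga_\ia(M/\ga_\ia(M))$ at $\im\in\max(\ia)$ via the natural isomorphism $\rho^\im_\ia$ and invoke \ref{pre05} F) to handle the remaining maximal ideals. Your parenthetical weak-assassin argument for $P=0$ is a correct but unnecessary embellishment, since you have already shown $P_\im=0$ for every $\im\in\Max(R)$.
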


\begin{proof}
``(1)$\Rightarrow$(2)'': Suppose that (1) holds. Let $S\subseteq R$ be a subset, and let $M$ be an $S^{-1}R$-module. Then, \[\ga_{S^{-1}\ia}(M/\ga_{S^{-1}\ia}(M))\res_R=\ga_\ia(M\res_R/\ga_\ia(M\res_R))=0\] (\ref{pre05} G)). This implies (2). ``(3)$\Rightarrow$(1)'': Suppose that $\rho^\im_\ia$ is an isomorphism for every $\im\in\max(\ia)$ and that (3) holds. Let $M$ be an $R$-module. If $\im\in\max(\ia)$, then \[\ga_\ia(M/\ga_\ia(M))_\im=\ga_{\ia_\im}(M_\im/\ga_{\ia_\im}(M_\im))=0\] (\ref{pre11} A)). Together with \ref{pre05} F) this implies (1).
\end{proof}

\begin{prop}\label{loc10}
Let $\P$ denote one of the properties of being half-centred, centred, weakly quasifair, or weakly large quasifair. We consider the following statements:
(1) $\ia$ has $\P$;
(2) $S^{-1}\ia$ has $\P$ for every subset $S\subseteq R$;
(3) $\ia_\im$ has $\P$ for every $\im\in\max(\ia)$.

We have (1)$\Rightarrow$(2)$\Rightarrow$(3). If $\rho^\im_\ia$ is an isomorphism for every $\im\in\max(\ia)$, we have (1)$\Leftrightarrow$(2)$\Leftrightarrow$(3).
\end{prop}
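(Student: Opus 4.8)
The plan is to follow the template already established by \ref{loc05}, exploiting the fact that all four properties are formulated entirely in terms of weak assassins and torsion functors, and that weak assassins localise well (\ref{pre11} B)) while torsion functors have the base-change behaviour of \ref{pre05} G). First I would prove (1)$\Rightarrow$(2). Let $S\subseteq R$ be a subset and let $M$ be an $S^{-1}R$-module; write $\ib\dfgl S^{-1}\ia$. Since every property $\P$ is characterised by an equation involving $\assf$, $\var$, $\ga_\bullet$ or $\oga_\bullet$ applied to modules, the key is to transfer each ingredient along restriction of scalars. For the torsion functors we have $\ga_\ib(M)\res_R=\ga_\ia(M\res_R)$ and $\oga_\ib(M)\res_R=\oga_\ia(M\res_R)$ by \ref{pre05} G). For weak assassins, by the last sentence of \ref{pre11} B), $\assf_{S^{-1}R}(N)$ corresponds bijectively (via $\iq\mapsto\iq\cap R$, equivalently $\ip\mapsto S^{-1}\ip$) to $\assf_R(N\res_R)$ for any $S^{-1}R$-module $N$; and this bijection matches $\var(\ib)$ with the part of $\var(\ia)$ disjoint from $S$, i.e. with $\{\ip\in\var(\ia)\mid\ip\cap S=\emptyset\}=\var(\ia)\cap\spec(S^{-1}R)$. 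Thus each of the defining conditions for $\P$ over $S^{-1}R$ applied to $M$ is literally the condition for $\P$ over $R$ applied to $M\res_R$, restricted to primes meeting $S$ trivially — and the latter holds because $\ia$ has $\P$ over $R$. Hence $S^{-1}\ia$ has $\P$. The implication (2)$\Rightarrow$(3) is immediate, taking $S=R\setminus\im$ for $\im\in\max(\ia)$ (note every $\im\in\max(\ia)$ lies in $\var(\ia)$, so $S^{-1}\ia=\ia_\im$ is a proper localisation in the relevant sense).

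For the converse under the hypothesis that $\rho^\im_\ia$ is an isomorphism for every $\im\in\max(\ia)$, I would prove (3)$\Rightarrow$(1), mimicking the ``(3)$\Rightarrow$(1)'' step of \ref{loc05}. Note first that if $\rho^\im_\ia$ is an isomorphism for all $\im\in\max(\ia)$, then $\ga_\ia=\oga_\ia$: indeed by \ref{fac60} a) and the support bound $\supp_R(\oga_\ia(M)/\ga_\ia(M))\subseteq\var(\ia)$ (\ref{pre05} F)) it suffices to check equality after localising at each $\im\in\max(\ia)$, where $\rho^\im_\ia$ being an iso forces $\ga_{\ia_\im}=\oga_{\ia_\im}$ on localisations — so the four cases of $\P$ collapse appropriately and $\overline\rho^\im_\ia$ is an isomorphism as well. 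Now let $M$ be an $R$-module witnessing failure of $\P$ for $\ia$; the failure is detected by some prime $\ip\in\assf_R(\cdots)$ or by non-vanishing of a torsion submodule, and in every case $\ip\in\var(\ia)$, hence $\ip\subseteq\im$ for some $\im\in\max(\ia)$. Localising at $\im$ and using \ref{pre11} B) together with $\rho^\im_\ia$ (resp.\ $\overline\rho^\im_\ia$) being an isomorphism, the same failure is visible over $R_\im$ for the module $M_\im$ — contradicting that $\ia_\im$ has $\P$. For the two vanishing-type properties (half-centredness via \ref{fac63} B) as $\ga_\ia=\oga_\ia$, and the centredness/quasifairness conditions) one argues with \ref{fac60}, \ref{fac100} and the support bound to reduce ``$\ga_\ia(M)=0$?'' and ``$\ga_\ia(M)=M$?'' to the corresponding questions on all the $M_\im$.

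The main obstacle is the bookkeeping in (3)$\Rightarrow$(1): one must verify carefully, property by property, that the \emph{negation} of $\P$ over $R$ localises — that is, that a counterexample module over $R$ produces a counterexample over some $R_\im$. This is where the hypothesis on $\rho^\im_\ia$ (and the resulting $\ga_\ia=\oga_\ia$, making $\overline\rho^\im_\ia$ an isomorphism too) is essential, since without it the monomorphisms $\rho^S_\ia$, $\overline\rho^S_\ia$ of \ref{pre11} A) need not be surjective and the torsion submodule upstairs could be strictly larger than what the localisations see. For weak quasifairness and weak large quasifairness the argument is cleanest, as weak assassins commute with localisation on the nose (\ref{pre11} B)) and only the torsion functor needs the iso hypothesis; for half-centredness and centredness one additionally invokes \ref{fac63} B) and \ref{fac60}/\ref{fac100} to rephrase everything in terms that localise. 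I expect no surprises beyond this case analysis — it is entirely parallel to \ref{loc05}, just carried out for four properties at once.
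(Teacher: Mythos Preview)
Your (1)$\Rightarrow$(2)$\Rightarrow$(3) argument is fine and matches the paper's approach: restriction of scalars combined with \ref{pre05} G) and the weak-assassin bijection \ref{pre11} B) transfers each defining condition for $\P$ from $R$ to $S^{-1}R$.

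The gap is in your (3)$\Rightarrow$(1) argument, specifically the claim that the hypothesis ``$\rho^\im_\ia$ is an isomorphism for every $\im\in\max(\ia)$'' alone forces $\ga_\ia=\oga_\ia$. Your justification is that ``$\rho^\im_\ia$ being an iso forces $\ga_{\ia_\im}=\oga_{\ia_\im}$ on localisations'', but this does not follow: the isomorphy of $\rho^\im_\ia$ only says that \emph{small} torsion commutes with localisation at $\im$; it says nothing about whether small and large torsion over $R_\im$ coincide. Indeed, for an $R_\im$-module $N$ one already has $\ga_\ia(N\!\res_R)_\im=\ga_{\ia_\im}(N)$ by \ref{pre05} G) without any hypothesis, so the iso assumption gives no extra leverage on $\ga_{\ia_\im}$ versus $\oga_{\ia_\im}$. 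Consequently your deduction that $\overline\rho^\im_\ia$ is also an isomorphism, and that ``the four cases collapse'', is unsupported.

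The paper avoids this shortcut and argues property by property. For half-centredness, it is hypothesis (3) itself---namely that each $\ia_\im$ is half-centred, i.e.\ $\ga_{\ia_\im}=\oga_{\ia_\im}$---that, combined with $\rho^\im_\ia$ being an iso, yields $\ga_\ia(M)_\im=\ga_{\ia_\im}(M_\im)=\oga_{\ia_\im}(M_\im)\supseteq\oga_\ia(M)_\im\supseteq\ga_\ia(M)_\im$ and hence equality. For centredness and weak quasifairness one does \emph{not} need $\ga_\ia=\oga_\ia$ at all: starting from $\ga_\ia(M)=0$ (resp.\ from $\ip\in\assf_R(M)\cap\var(\ia)$), one localises at a suitable $\im\in\max(\ia)$, uses $\rho^\im_\ia$ to identify $\ga_\ia(M)_\im$ with $\ga_{\ia_\im}(M_\im)$, and then applies property (3) and \ref{pre11} B) directly. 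For weak large quasifairness the paper's proof in fact invokes $\overline\rho^\im_\ia$ (note the parenthetical ``or $\overline\rho^\im_\ia$'' there) rather than $\rho^\im_\ia$. Your contrapositive outline (``a counterexample over $R$ localises to a counterexample over some $R_\im$'') is sound in spirit once you drop the unjustified collapse and run the four cases separately.
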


\begin{proof}
First, we prove the claim about half-centredness. ``(1)$\Rightarrow$(2)'': Suppose that $\ia$ is half-centred. Let $S\subseteq R$ be a subset, and let $M$ be an $S^{-1}R$-module. Then, \[\ga_{S^{-1}\ia}(M)\res_R=\ga_\ia(M\res_R)=\oga_\ia(M\res_R)=\oga_{S^{-1}\ia}(M)\res_R\] (\ref{pre05} G), \ref{fac63} B)). This implies (2). ``(3)$\Rightarrow$(1)'': Suppose that $\ia_\im$ is half-centred and $\rho^\im_\ia$ is an isomorphism for every $\im\in\max(\ia)$. Let $M$ be an $R$-module. If $\im\in\max(\ia)$, then \[\ga_\ia(M)_\im=\ga_{\ia_\im}(M_\im)=\oga_{\ia_\im}(M_\im)=\oga_\ia(M)_\im\] (\ref{fac63} B)). Together with \ref{pre05} F) this implies $\ga_\ia(M)=\oga_\ia(M)$ and therefore (1).

Second, we prove the claim about centredness. ``(1)$\Rightarrow$(2)'': Suppose that $\ia$ is centred. Let $S\subseteq R$ be a subset, and let $M$ be an $S^{-1}R$-module with $\ga_{S^{-1}\ia}(M)=0$. Then, $\ga_\ia(M\res_R)=\ga_{S^{-1}\ia}(M)\res_R=0$ (\ref{pre05} G)), hence, by centredness,\pagebreak \[\assf_{S^{-1}R}(M)\cap\var(S^{-1}\ia)\cong\{\ip\in\assf_R(M\res_R)\mid\ip\cap S=\emptyset\}\cap\var(\ia)\subseteq\]\[\assf_R(M\res_R)\cap\var(\ia)=\emptyset\] (\ref{pre11} B)). This implies (2). ``(3)$\Rightarrow$(1)'': Suppose that $\ia_\im$ is centred and $\rho^\im_\ia$ is an isomorphism for every $\im\in\max(\ia)$. Let $M$ be an $R$-module with $\ga_\ia(M)=0$.\linebreak If $\im\in\max(\ia)$, then $\ga_{\ia_\im}(M_\im)\cong\ga_\ia(M)_\im=0$, hence, by centredness,\linebreak $\assf_{R_\im}(M_\im)\cap\var(\ia_\im)=\emptyset$. Together with \ref{pre11} B) this implies $\assf_R(M)\cap\var(\ia)=\emptyset$, and therefore (1) holds.

Third, we prove simultaneously the claims about weak quasifairness and weak large quasifairness. For an ideal $\ib$ we write $F_\ib$ for $\ga_\ib$ or $\oga_\ib$. ``(1)$\Rightarrow$(2)'': Suppose that $\ia$ is weakly (large) quasifair. Let $S\subseteq R$ be a subset. Let $M$ be an $S^{-1}R$-module. If $\mathfrak{P}\in\assf_{S^{-1}R}(S^{-1}M)\cap\var(S^{-1}\ia)$, then there exists $\ip\in\linebreak\assf_R(M)\cap\var(\ia)=\assf_R(F_\ia(M))$ with $\ip\cap S=\emptyset$ and \[\mathfrak{P}=S^{-1}\ip\in\assf_{S^{-1}R}(S^{-1}F_\ia(M))\subseteq\assf_{S^{-1}R}(F_{S^{-1}\ia}(S^{-1}M))\] (\ref{pre10} B), \ref{pre11} A), B)), implying (2). 

``(3)$\Rightarrow$(1)'': Suppose that $\ia_\im$ is weakly (large) quasifair and that $\rho^\im_\ia$ (or $\overline{\rho}^\im_\ia$) is an isomorphism for every $\im\in\max(\ia)$. Let $M$ be an $R$-module. Let $\ip\in\assf_R(M)\cap\var(\ia)$. If $\im\in\max(\ia)$ with $\ip\subseteq\im$, then \[\ip_\im\in\assf_{R_\im}(M_\im)\cap\var(\ia_\im)=\assf_{R_\im}(F_{\ia_\im}(M_\im))=\assf_{R_\im}(F_\ia(M)_\im)\] (\ref{pre11} B)), hence $\ip\in\assf_R(F_\ia(M))$, implying (1).
\end{proof}

\begin{prop}\label{loc40}
a) Let $S\subseteq R$ be a subset such that $\rho^S_\ia$ (or $\overline{\rho}^S_\ia$) is an isomorphism. If $\ia$ is weakly (large) fair, then so is $S^{-1}\ia$.

b) Suppose that $\rho_\ia^\im$ (or $\overline{\rho}^\im_\ia$) is an isomorphism for every $\im\in\Max(R)$. Then, $\ia$ is weakly (large) fair if and only if $\ia_\im$ is weakly (large) fair for every $\im\in\Max(R)$.
\end{prop}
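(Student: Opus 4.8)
The plan is to prove the ``small'' and the ``large'' statement simultaneously, writing $F_\ib$ for $\ga_\ib$ and, for a subset $T\subseteq R$, $\pi^T_\ib$ for the canonical monomorphism $\rho^T_\ib$ of \ref{pre11} A) (resp.\ $\oga_\ib$ and $\overline{\rho}^T_\ib$), and reducing everything to two transport principles. First, for a multiplicative subset $S\subseteq R$ and an $R$-module $N$, the assignment $\ip\mapsto S^{-1}\ip$ of \ref{pre11} B) is a bijection from $\{\ip\in\assf_R(N)\mid\ip\cap S=\emptyset\}$ onto $\assf_{S^{-1}R}(S^{-1}N)$, and for such $\ip$ one has $\ip\in\var(\ia)$ iff $S^{-1}\ip\in\var(S^{-1}\ia)$; hence this bijection restricts to one from $\{\ip\in\assf_R(N)\setminus\var(\ia)\mid\ip\cap S=\emptyset\}$ onto $\assf_{S^{-1}R}(S^{-1}N)\setminus\var(S^{-1}\ia)$. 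Second, if $\pi^S_\ia$ is an isomorphism, then $F_{S^{-1}\ia}(S^{-1}M)=S^{-1}F_\ia(M)$ for every $R$-module $M$, so by exactness of localisation $S^{-1}M/F_{S^{-1}\ia}(S^{-1}M)\cong S^{-1}(M/F_\ia(M))$.

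For part a), let $N$ be an $S^{-1}R$-module and put $M\dfgl N\res_R$, so that $N\cong S^{-1}M$. By the second principle $N/F_{S^{-1}\ia}(N)\cong S^{-1}(M/F_\ia(M))$, so applying the first principle to $M/F_\ia(M)$ identifies $\assf_{S^{-1}R}(N/F_{S^{-1}\ia}(N))$ with $\{\ip\in\assf_R(M/F_\ia(M))\mid\ip\cap S=\emptyset\}$. Since $\ia$ is weakly (large) fair this equals $\{\ip\in\assf_R(M)\setminus\var(\ia)\mid\ip\cap S=\emptyset\}$, which the first principle (applied now to $M$) identifies with $\assf_{S^{-1}R}(N)\setminus\var(S^{-1}\ia)$. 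As $N$ was arbitrary, $S^{-1}\ia$ is weakly (large) fair.

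For part b), the implication ``$\ia$ weakly (large) fair $\Rightarrow$ each $\ia_\im$ weakly (large) fair'' is part a) applied with $S=R\setminus\im$, since then $S^{-1}\ia=\ia_\im$ and $\rho^S_\ia=\rho^\im_\ia$ is an isomorphism by hypothesis. For the converse, let $M$ be an $R$-module; by \ref{fac10} A) d) (resp.\ \ref{fac20} d)) it suffices to prove $\assf_R(M/F_\ia(M))\subseteq\assf_R(M)\setminus\var(\ia)$. Fix $\ip\in\assf_R(M/F_\ia(M))$ and choose $\im\in\Max(R)$ with $\ip\subseteq\im$; set $S=R\setminus\im$. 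By the first principle $\ip_\im\in\assf_{R_\im}((M/F_\ia(M))_\im)$, and as $\pi^\im_\ia$ is an isomorphism, $(M/F_\ia(M))_\im\cong M_\im/F_{\ia_\im}(M_\im)$. Weak (large) fairness of $\ia_\im$ gives $\ip_\im\in\assf_{R_\im}(M_\im)\setminus\var(\ia_\im)$; transporting back through \ref{pre11} B) (legitimate because $\ip\cap S=\emptyset$) yields $\ip\in\assf_R(M)$, while $\ia_\im\not\subseteq\ip_\im$ forces $\ia\not\subseteq\ip$, i.e.\ $\ip\notin\var(\ia)$. Hence $\ia$ is weakly (large) fair.

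I do not anticipate a real obstacle: the whole argument is bookkeeping with the localisation behaviour of weak assassins and of the sets $\var(\ia)$. The only indispensable use of an isomorphism hypothesis is the identification $(M/F_\ia(M))_\im\cong M_\im/F_{\ia_\im}(M_\im)$ in the delocalisation step of b); with only the monomorphism $\pi^\im_\ia$ available one obtains merely a surjection $(M/F_\ia(M))_\im\twoheadrightarrow M_\im/F_{\ia_\im}(M_\im)$, which -- exactly as in the proof of \ref{loc10} -- is not enough to transport associated primes in the direction needed.
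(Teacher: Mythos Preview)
Your proof is correct and follows essentially the same route as the paper's. Both arguments handle part a) by applying the bijection of \ref{pre11} B) to $M$ and to $M/F_\ia(M)$ (with $M$ the restriction of an $S^{-1}R$-module), and handle the delocalisation direction of b) by localising at a maximal ideal $\im\supseteq\ip$ and using the isomorphism hypothesis to identify $(M/F_\ia(M))_\im$ with $M_\im/F_{\ia_\im}(M_\im)$ before invoking weak (large) fairness of $\ia_\im$.
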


\begin{proof}
For an ideal $\ib$ we write $F_\ib$ for $\ga_\ib$ or $\oga_\ib$. a) Let $M$ be an $S^{-1}R$-module. If \[\mathfrak{P}\in\assf_{S^{-1}R}(S^{-1}M/F_{S^{-1}\ia}(S^{-1}M))=\assf_{S^{-1}R}(S^{-1}(M/F_\ia(M))),\] then there exists $\ip\in\assf_R(M/F_\ia(M))=\assf_R(M)\setminus\var(\ia)$ with $\ip\cap S=\emptyset$ and $\mathfrak{P}=S^{-1}\ip\in\assf_{S^{-1}R}(S^{-1}M)\setminus\var(S^{-1}\ia)$ (\ref{pre11} B)), implying the claim. b) Suppose that $\ia_\im$ is weakly (large) fair for every $\im\in\Max(R)$. Let $M$ be an $R$-module, and let $\ip\in\assf_R(M/F_\ia(M))$. If $\im\in\Max(R)$ with $\ip\subseteq\im$, then \[\ip_\im\in\assf_{R_\im}((M/F_\ia(M))_\im)=\assf_{R_\im}(M_\im/F_{\ia_\im}(M_\im))=\assf_{R_\im}(M_\im)\setminus\var(\ia_\im),\] hence $\ip\in\assf_R(M)\setminus\var(\ia)$ (\ref{pre11} B)). Together with a) this yields the claim.
\end{proof}

\begin{prop}\label{loc50}
If $\overline{\rho}^\ip_\ia$ is an isomorphism for every $\ip\in\var(\ia)$, then $\ia$ is weakly large quasifair.
\end{prop}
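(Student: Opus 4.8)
The plan is to combine the general inclusion from \ref{fac20} b) with a localisation argument. Since $\assf_R(\oga_\ia(M))\subseteq\assf_R(M)\cap\var(\ia)$ holds for every $R$-module $M$ by \ref{fac20} b), it suffices to prove the reverse inclusion; so let $M$ be an $R$-module and let $\ip\in\assf_R(M)\cap\var(\ia)$, and we must show $\ip\in\assf_R(\oga_\ia(M))$.

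First I would localise at $\ip$. By \ref{pre11} B) the condition $\ip\in\assf_R(M)$ gives $\ip_\ip\in\assf_{R_\ip}(M_\ip)$, so there is $y\in M_\ip$ with $\ip_\ip\in\min(0:_{R_\ip}y)$; moreover $\ia\subseteq\ip$ gives $\ia_\ip\subseteq\ip_\ip$. The key point is that, because $R_\ip$ is local with maximal ideal $\ip_\ip$, every prime of $R_\ip$ containing $(0:_{R_\ip}y)$ is contained in $\ip_\ip$, whence by minimality $\ip_\ip$ is the \emph{only} such prime and $\sqrt{(0:_{R_\ip}y)}=\ip_\ip\supseteq\ia_\ip$. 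Therefore $y\in\oga_{\ia_\ip}(M_\ip)$, and since $\ip_\ip\in\min(0:_{R_\ip}y)$ and the annihilator of $y$ in $R_\ip$ does not change upon passing to the submodule $\oga_{\ia_\ip}(M_\ip)$, this yields $\ip_\ip\in\assf_{R_\ip}(\oga_{\ia_\ip}(M_\ip))$.

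Now I would feed in the hypothesis. As $\ip\in\var(\ia)$, the morphism $\overline{\rho}^\ip_\ia\colon\oga_\ia(M)_\ip\to\oga_{\ia_\ip}(M_\ip)$ is an isomorphism, so $\ip_\ip\in\assf_{R_\ip}(\oga_\ia(M)_\ip)$. Applying \ref{pre11} B) once more, now to the module $\oga_\ia(M)$, there is $\iq\in\assf_R(\oga_\ia(M))$ with $\iq\subseteq\ip$ and $\iq_\ip=\ip_\ip$; since a prime contained in $\ip$ is recovered from its localisation, $\iq=\ip$, and hence $\ip\in\assf_R(\oga_\ia(M))$, as required.

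I do not expect a serious obstacle here: the only delicate point is the local-ring observation that a maximal ideal which is weakly associated to $M_\ip$ is actually the radical of $(0:_{R_\ip}y)$ for its witness $y$, which is precisely what forces $y$ into the large torsion submodule; once that is in place, everything else is bookkeeping with the localisation bijection of \ref{pre11} B) and the assumed isomorphism $\overline{\rho}^\ip_\ia$.
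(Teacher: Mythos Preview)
Your argument is correct. The core idea---localising at $\ip$, observing that in the local ring $R_\ip$ the maximal ideal $\ip_\ip$ being a minimal prime over $(0:_{R_\ip}y)$ forces $\sqrt{(0:_{R_\ip}y)}=\ip_\ip\supseteq\ia_\ip$ and hence $y\in\oga_{\ia_\ip}(M_\ip)$, and then transporting back via $\overline{\rho}^\ip_\ia$ and \ref{pre11} B)---is exactly the mechanism the paper uses.

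The only difference is packaging: the paper does not verify the inclusion $\assf_R(M)\cap\var(\ia)\subseteq\assf_R(\oga_\ia(M))$ directly, but instead invokes the equivalent characterisation \ref{fac100} and checks that $\oga_\ia(M)=0$ implies $\assf_R(M)\cap\var(\ia)=\emptyset$. Contrapositively this is the same local computation you carry out, just applied to the special case where the large torsion submodule vanishes. Your route is marginally more self-contained in that it bypasses \ref{fac100}; the paper's route is marginally shorter on the page. Neither gains anything substantial over the other.
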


\begin{proof}
Let $M$ be an $R$-module with $\oga_\ia(M)=0$. If $\ip\in\var(\ia)$, then $\oga_{\ia_\ip}(M_\ip)=\oga_\ia(M)_\ip=0$, hence $\assf_{R_\ip}(M_\ip)\cap\var(\ia_\ip)=\emptyset$ (\ref{fac60} a)), thus $\ip_\ip\notin\assf_{R_\ip}(M_\ip)$, and therefore $\ip\notin\assf_R(M)$ (\ref{pre11} B)). It follows that $\assf_R(M)\cap\var(\ia)=\emptyset$, and so \ref{fac100} yields the claim.
\end{proof}

\noindent\textbf{Acknowledgement:} I thank the anonymous referee for his suggestions.

\end{document}